\pdfoutput=1 

\documentclass[12pt]{article}
\usepackage[utf8]{inputenc}
\usepackage{amsmath}
\usepackage{amssymb}
\usepackage{amsthm}
\usepackage{tikz}
\usepackage{setspace}
\usetikzlibrary{patterns}

\textwidth 16cm
\oddsidemargin 0cm
\evensidemargin 0cm

\newtheorem{theorem}{Theorem}[section]
\newtheorem{definitio}[theorem]{Definition}
\newenvironment{definition}{\begin{definitio} \rm }{\end{definitio}}
\newtheorem{lemma}[theorem]{Lemma}
\newtheorem{proposition}[theorem]{Proposition}
\newtheorem{corollary}[theorem]{Corollary}
\theoremstyle{remark}
\newtheorem*{remark}{Remark}
\newtheorem*{notation}{Notation}
\newtheorem{fact}[theorem]{Fact}

\newenvironment{partext}[1]{\begin{minipage}{#1} \begin{center}}{\end{center}\end{minipage}}

\def\fl#1{\smash{\mathop{\hbox to 11mm{ \rightarrowfill\ }}\limits^{\scriptstyle{#1}}}}

\newcommand{\N}{\mathbb{N}}
\newcommand{\Z}{\mathbb{Z}}
\newcommand{\Q}{\mathbb{Q}}
\newcommand{\K}{\mathbb{K}}

\newcommand{\Qt}{\mathbb{Q}(t)}
\newcommand{\Ztt}{\mathbb{Z}[t^{\pm1}]}
\newcommand{\Qtt}{\mathbb{Q}[t^{\pm1}]}
\newcommand{\iso}{\fl{\scriptstyle{\cong}}}
\newcommand{\cub}{C^{g^+}_{g^-}}
\newcommand{\tcob}{\widetilde{\mathcal{C}ob}}
\newcommand{\cob}{\mathcal{C}ob}
\newcommand{\tlcob}{\widetilde{\mathcal{LC}ob}}
\newcommand{\lcob}{\mathcal{LC}ob}
\newcommand{\btt}{{}_b^t\mathcal{T}}
\newcommand{\tbtt}{{}_b^t\widetilde{\mathcal{T}}}
\newcommand{\tang}{\mathcal{T}_q}
\newcommand{\ttang}{\widetilde{\mathcal{T}}_q}
\newcommand{\tcub}{\mathcal{T}_q\mathcal{C}ub}
\newcommand{\ttcub}{\widetilde{\mathcal{T}}_q\mathcal{C}ub}
\newcommand{\A}{\mathcal{A}}
\newcommand{\tA}{\widetilde{\mathcal{A}}}
\newcommand{\bA}{\bar{\mathcal{A}}}
\newcommand{\tAw}{\widetilde{\mathcal{A}}^\textrm{\textnormal{w}}}
\newcommand{\tAts}{{}^{ts}\hspace{-4pt}\widetilde{\mathcal{A}}}
\newcommand{\tAtsm}[2]{{}^{ts}\hspace{-4pt}\widetilde{\mathcal{A}}(*_{\lfloor #1\rceil^+\cup\lfloor #2\rceil^-})}
\newcommand{\pgf}[2]{*_{\lfloor #1\rceil^+\cup\lfloor #2\rceil^-}}
\newcommand{\holw}{Hol$_\textrm{w}$}
\newcommand{\xd}{\,\raisebox{-0.5ex}{\begin{tikzpicture} \draw[->] (0,0) -- (0,0.4);\end{tikzpicture}}\,}
\newcommand{\xdop}{\,\raisebox{-0.5ex}{\begin{tikzpicture} \draw[<-] (0,0) -- (0,0.4);\end{tikzpicture}}\,}
\newcommand{\xc}{\raisebox{-0.2ex}{\begin{tikzpicture} \draw (0,0) circle (0.16); \draw[->] (0.16,0.03) -- (0.16,0.04);\end{tikzpicture}}\,}
\newcommand{\xl}{\textrm{\footnotesize\textcircled{\raisebox{-0.9ex}{\normalsize*}}}}
\newcommand{\xlw}{\textrm{\textcircled{\scriptsize\raisebox{0.2ex}{\textcircled{\raisebox{-1.3ex}{\normalsize*}}}}}}
\newcommand{\Zp}{Z^\bullet}
\newcommand{\Zc}{Z^\circ}
\newcommand{\tZ}{\tilde{Z}}
\newcommand{\expd}{\textrm{\textnormal{exp}}_\sqcup}
\newcommand{\Lk}{\textrm{\textnormal{Lk}}}
\newcommand{\lk}{\textrm{\textnormal{lk}}}
\newcommand{\T}{\boldsymbol{\top}}
\newcommand{\bK}{\hat{K}}
\newcommand{\bXi}{\hat{\Xi}}
\newcommand{\chir}{\mathbf{C}}
\newcommand{\chirs}{\mathbf{c}}
\newcommand{\totC}{\mathcal{C}}

\newcommand{\draww}[1]{\draw[white,line width=5pt] #1 \draw #1}

\newcommand{\rat}[3]{
\raisebox{-0.5cm}{
\begin{tikzpicture} [scale=0.7]
 \draw[dashed] (-1,1) node[above] {#3} -- (0,0) -- (0,1.1) (0,1) node[above] {#2} (0,0) -- (1,1) node[above] {#1}; 
\end{tikzpicture}}}

\title{Splitting formulas for the rational lift of the Kontsevich integral}
\author{Delphine Moussard}
\date{}

\begin{document}

\maketitle

\begin{abstract}
Kricker defined an invariant of knots in homology 3-spheres which is a rational lift of the Kontsevich integral 
and proved with Garoufalidis that this invariant satis\-fies splitting formulas with respect to a surgery move called null-move. 
We define a functorial extension of the Kricker invariant and prove splitting formulas for this functorial invariant 
with respect to null Lagrangian-preserving surgery, a genera\-li\-za\-tion of the null-move. We apply these splitting formulas 
to the Kricker invariant.
\vspace{1ex}

\noindent \textbf{MSC: 57M27} 57M25 57N10 

\noindent \textbf{Keywords:} 3-manifold, knot, homology sphere, cobordism category, Lagrangian cobordism, bottom-top tangle, beaded Jacobi diagram, Kontsevich integral, 
LMO invariant, Kricker invariant, \AA rhus integral, Lagrangian-preserving surgery, finite type invariant, splitting formula.
\end{abstract}

\tableofcontents

    \section{Introduction}

    \subsection{Context}

This paper presents the construction of a functorial extension of the Kricker rational lift of the Kontsevich integral, which aims at expliciting 
the properties of this invariant as a series of finite type invariants. 

The notion of finite type invariants first appeared in independent works of Goussarov and Vassiliev involving invariants of knots in the 3--dimensional 
sphere $S^3$; in this case, finite type invariants are also called Vassiliev invariants. 
Finite type invariants of knots in $S^3$ are defined by their polynomial behaviour with respect to crossing changes. 
The discovery of the Kontsevich integral, which is a universal invariant among all finite type invariants of knots in $S^3$, revealed that this class 
of invariants is very prolific. It is known, for instance, that it dominates all Witten-Reshetikhin-Turaev's quantum invariants. 
A theory of finite type invariants can be defined for any kind of topological objects provided that an elementary move on the set of these objects is fixed; 
the finite type invariants are defined by their polynomial behaviour with respect to this elementary move. 
For 3--dimensional manifolds, the notion of finite type invariants was introduced by Ohtsuki \cite{Oht4}, who constructed the first examples for integral 
homology 3--spheres, and it has been widely developed and generalized since then. In particular, Goussarov and Habiro independently developed 
a theory which involves any 3--dimensional manifolds ---and their knots--- and which contains the Ohtsuki theory for $\Z$--spheres \cite{GGP,Hab}. 

In \cite{Kri}, Kricker constructed a rational lift of the Kontsevich integral of knots in integral homology 3--spheres ($\Z$--spheres). In \cite{GK}, he proved with 
Garoufalidis that his construction provides an invariant of knots in $\Z$--spheres. They also proved that the Kricker invariant satisfies some splitting 
formulas with respect to the so-called null-move. For knots in $\Z$--spheres with trivial Alexander polynomial, these formulas together with work of Garoufalidis 
and Rozansky \cite{GR} imply that the Kricker invariant is a universal finite type invariant with respect to the null-move. 

Kricker's construction easily 
generalizes to null-homologous knots in rational homology 3--spheres ($\Q$--spheres); the main goal of this article is to prove splitting formulas for the Kricker 
invariant of these knots with respect to null Lagrangian-preserving surgery, a move which generalizes the null-move. 
For null-homologous knots in $\Q$--spheres with trivial Alexander polynomial, these formulas are used in \cite{M7} to prove that this extended Kricker 
invariant is a universal finite type invariant with respect to null Lagrangian-preserving surgeries. 

Lescop defined in \cite{Les2} an invariant of null-homologous knots in $\Q$--spheres and proved in \cite{Les3} splitting formulas for this invariant 
with respect to null Lagrangian-preserving surgeries, similar to the ones proved in this paper for the Kricker invariant. Lescop conjectured in \cite{Les3} that her invariant 
is equivalent to the Kricker invariant. The mentioned results of Garoufalidis, Kricker, Lescop and Rozansky give such an equivalence for knots in $\Z$--spheres 
with trivial Alexander polynomial and the results of the present paper allow to generalize this equivalence to null-homologous knots in $\Q$--spheres with 
trivial Alexander polynomial \cite[Theorem 2.11]{M7}. 

A similar situation arises in the study of finite type invariants of $\Q$--spheres with respect to Lagrangian-preserving surgeries. In this case, 
the Kontsevich--Kuperberg--Thurston invariant and the Le--Murakami--Ohtsuki invariant are both universal, up to degree 1 invariants deduced from the cardinality 
of the first homology group; this implies an equivalence result for these two invariants, see \cite{M2}. For the KKT invariant, splitting formulas with respect 
to Lagrangian-preserving surgeries were proved by Lescop \cite{Les}; for the LMO invariant, similar formulas were proved by Massuyeau \cite{Mas}. 
Massuyeau's proof of his splitting formulas is based on an extension of the LMO invariant of $\Q$--spheres to a functor defined on a category of Lagrangian 
cobordisms that he constructed with Cheptea and Habiro \cite{CHM}. 

In this paper, we extend the LMO functorial invariant of Cheptea--Habiro--Massuyeau to a category of Lagrangian cobordisms with paths, inserting the Kricker's 
idea in the construction. We obtain a functorial invariant from which the Kricker invariant of null-homologous knots in $\Q$--spheres 
is recovered. Following Massuyeau, we use the functo\-ria\-lity to obtain splitting formulas for our invariant and, as a consequence, 
for the Kricker invariant. 

\paragraph{Notations and conventions.}
For $\K=\Z,\Q$, a {\em $\K$--sphere}, (resp. a {\em $\K$--cube}) is a 3--manifold, compact and oriented, which has the same homology with coefficients 
in $\K$ as the standard 3--sphere (resp. 3--cube). The boundary of an oriented manifold with boundary is oriented with the ``outward normal first'' convention. 

\paragraph{Acknowledgments.}
I am supported by a Postdoctoral Fellowship of the Ja\-pan Society for the Promotion of Science. I am grateful to Tomotada Ohtsuki and the Research Institute for Mathematical Sciences for their support. I also wish to thank Gw\'ena\"el Massuyeau for interesting exchanges. Finally, I thank the referee whose useful comments helped to improve the paper.

    \subsection{Statement of the main result}

We first give the definitions we need to state our main result. 

\paragraph{Null LP--surgeries.}
For $g\in \mathbb{N}$, a \emph{genus $g$ rational homology handlebody ($\Q$--handlebody)} is a 3--manifold which is compact, oriented, 
and which has the same homology with rational coefficients as the standard genus $g$ handlebody. 
Such a $\Q$--handlebody is connected, and its boundary is necessarily homeomorphic to the standard genus $g$ surface.

The \emph{Lagrangian} $\mathcal{L}_C$ of a $\Q$--handlebody $C$ is the kernel of the map $i_*: H_1(\partial C;\Q)\to H_1(C;\Q)$
induced by the inclusion. The Lagrangian of a $\Q$--handlebody $C$ is indeed a Lagrangian subspace of $H_1(\partial C;\Q)$ 
with respect to the intersection form. A {\em Lagrangian-preserving pair}, or {\em LP--pair}, is a pair $\chir=\left(\frac{C'}{C}\right)$ 
of $\Q$--handlebodies equipped with a homeomorphism $h:\partial C\fl{\cong}\partial C'$ such that $h_*(\mathcal{L}_C)=\mathcal{L}_{C'}$. 

Given a 3--manifold $M$, a \emph{Lagrangian-preserving surgery}, or \emph{LP--surgery}, on $M$ is a family $\chir=(\chir_1,\dots,\chir_n)$ 
of LP--pairs such that the $C_i$ are embedded in $M$ and disjoint. The manifold obtained from $M$ by LP--surgery on $\chir$ is defined as 
$$M(\chir)=\left(M\setminus(\sqcup_{1\leq i \leq n} C_i)\right)\cup_{\partial}(\sqcup_{1\leq i \leq n} C_i').$$

Let $M$ be a 3--manifold such that $H_1(M;\Q)=0$ and let $K$ be a disjoint union of knots or paths properly embedded in $M$. A \emph{$\Q$--handlebody null in $M\setminus K$} is a $\Q$--handlebody $C\subset M\setminus K$ such that the map $i_* : H_1(C;\Q)\to H_1(M\setminus K;\Q)$ induced by the inclusion has a trivial image.
A \emph{null LP--surgery} on $(M,K)$ is an LP--surgery $\chir=(\chir_1,\dots,\chir_n)$ on $M\setminus K$ such that each $C_i$ is null in $M\setminus K$. 
The pair obtained by surgery is denoted by $(M,K)(\chir)$. 

\paragraph{The tensor $\mu(\chir)$.}
Given an LP--pair $\chir=\left(\frac{C'}{C}\right)$, define the associated {\em total manifold} $\totC=(-C)\cup C'$ and define 
$$\mu(\chir)\in\hom(\Lambda^3 H^1(\totC;\Q),\Q)\cong\Lambda^3 H_1(\totC;\Q)$$
by associating with a triple of cohomology classes the evaluation of their triple cup product on the fundamental class of $\totC$. 
For a family $\chir=(\chir_1,\dots,\chir_n)$ of LP--pairs, let $\totC=\totC_1\sqcup\dots\sqcup\totC_n$ and set:
$$\mu(\chir)=\mu(\chir_1)\otimes\dots\otimes\mu(\chir_n)\ \in\otimes_{i=1}^n\Lambda^3 H_1(\totC_i;\Q).$$
The natural identification $H_1(\totC;\Q)\cong\oplus_{i=1}^n H_1(\totC_i;\Q)$ allows to see $\otimes_{i=1}^n\Lambda^3 H_1(\totC_i;\Q)$ as a subspace of $\left(\Lambda^3 H_1(\totC;\Q)\right)^{\otimes n}$. This subspace injects to $S^n\Lambda^3 H_1(\totC;\Q)$ {\em via} the canonical surjection $\left(\Lambda^3 H_1(\totC;\Q)\right)^{\otimes n}\twoheadrightarrow S^n\Lambda^3 H_1(\totC;\Q)$. Hence we can view $\mu(\chir)$ as an element of $S^n\Lambda^3 H_1(\totC;\Q)$.

\paragraph{The bilinear form $\ell_{(S,\kappa)}(\chir)$.}
Let $(S,\kappa)$ be a {\em $\Q$SK--pair}, {\em i.e.} a pair made of a $\Q$--sphere $S$ and a null-homologous knot $\kappa\subset S$. 
Let $\chir=(\chir_1,\dots,\chir_n)$ be a null LP--surgery on $(S,\kappa)$. Let $\totC=\totC_1\sqcup\dots\sqcup\totC_n$ 
be the disjoint union of the associated total manifolds. Fix a lift $\tilde{C}_i$ of each $C_i$ in $\tilde{E}$. We will define a {\em hermitian} form:
$$\ell_{(S,\kappa)}(\chir): H_1(\totC;\Q)\times H_1(\totC;\Q)\to\Qt,$$
{\em i.e.} a $\Q$--bilinear form such that reversing the order of the arguments changes $t$ to $t^{-1}$. Let $a\in H_1(\totC_i;\Q)$ and $b\in H_1(\totC_j;\Q)$ 
be homology classes that can be represented by simple closed curves $\alpha\subset\partial C_i$ and $\beta\subset\partial C_j$, disjoint if $i=j$. 
Note that such homology classes generate $H_1(\totC;\Q)$ over $\Q$. 
Let $\tilde{\alpha}$ and $\tilde{\beta}$ be the copies of $\alpha$ and $\beta$ in $\tilde{C}_i$ and $\tilde{C}_j$. Set:
$$\ell_{(S,\kappa)}(\chir)(a,b)=\lk_e(\tilde{\alpha},\tilde{\beta}),$$
where $\lk_e(\cdot,\cdot)$ stands for the equivariant linking number (see for instance \cite[Section~2.1]{M7} for a definition). 
We get a well-defined hermitian form $\ell_{(S,\kappa)}(\chir)$ associated with a choice of lifts of the $C_i$'s. We will keep this choice implicit; 
the statement of Theorem~\ref{thmain} is valid for any such choice. 

\paragraph{Diagrammatic representations.}
Let $V$ be a rational vector space. A {\em $V$--colored Jacobi diagram} is a unitrivalent graph whose trivalent vertices are oriented and whose univalent 
vertices are labelled by $V$, where an {\em orientation} of a trivalent vertex is a cyclic order of the three edges that meet at this vertex ---fixed 
as \raisebox{-1.5ex}{
\begin{tikzpicture} [scale=0.2]
\newcommand{\tiers}[1]{
\draw[rotate=#1,color=white,line width=4pt] (0,0) -- (0,-2);
\draw[rotate=#1] (0,0) -- (0,-2);}
\draw (0,0) circle (1);
\draw[<-] (-0.05,1) -- (0.05,1);
\tiers{0}
\tiers{120}
\tiers{-120}
\end{tikzpicture}}
in the pictures. Set:
$$\A_\Q(V)=\frac{\Q\langle V\textrm{--colored Jacobi diagrams}\rangle}{\Q\langle\textrm{AS, IHX, LV}\rangle},$$
where the relations are depicted in Figure \ref{figrelations1}.
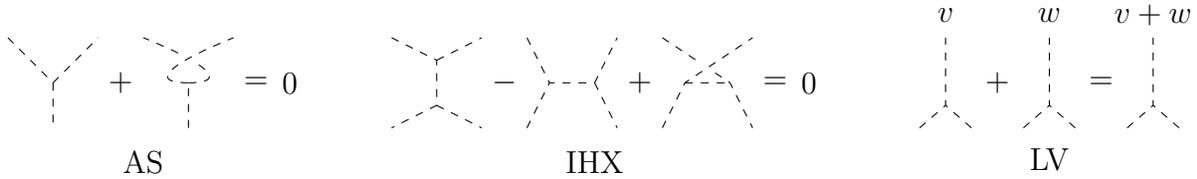
\begin{figure}[htb] 
\begin{center}
\begin{tikzpicture} [scale=0.3]
\begin{scope} [xshift=1cm]
 \draw[dashed] (0,4) -- (2,2);
 \draw[dashed] (2,2) -- (4,4);
 \draw[dashed] (2,2) -- (2,0);
 \draw (5,2) node{$+$};
 \draw[dashed] (8,2) .. controls +(2,0) and +(2.5,-1) .. (6,4);
 \draw[white,line width=6pt] (8,2) .. controls +(-2,0) and +(-2.5,-1) .. (10,4);
 \draw[dashed] (8,2) .. controls +(-2,0) and +(-2.5,-1) .. (10,4);
 \draw[dashed] (8,0) -- (8,2);
 \draw (11,2) node{$=$};
 \draw (12.5,2) node{0};
 \draw (6,-1.5) node{AS};
\end{scope}
 \draw[dashed] (18,4) -- (20,3) -- (20,1) -- (18,0);
 \draw[dashed] (20,1) -- (22,0);
 \draw[dashed] (20,3) -- (22,4);
 \draw (23,2) node{$-$};
 \draw[dashed] (24,4) -- (25,2) -- (27,2) -- (28,4);
 \draw[dashed] (24,0) -- (25,2);
 \draw[dashed] (27,2) -- (28,0);
 \draw (29,2) node{$+$};
 \draw[dashed] (30,4) -- (33,2) -- (34,0);
 \draw[white,line width=6pt] (31,2) -- (34,4);
 \draw[dashed] (30,0) -- (31,2) -- (34,4);
 \draw[dashed] (31,2) -- (33,2);
 \draw (35,2) node{$=$};
 \draw (36.5,2) node{0};
 \draw (27,-1.5) node{IHX};
\begin{scope} [xscale=2.3,yscale=2,xshift=18.5cm]
 \foreach \x in {0,2,4} {
 \draw[dashed] (\x-0.5,0) -- (\x,0.5) -- (\x,2) (\x+0.5,0) -- (\x,0.5);}
 \draw (0,2.5) node {$v$} (1,1) node {$+$} (2,2.5) node {$w$} (3,1) node {$=$} (4,2.5) node {$v+w$};
 \draw (2,-0.7) node{LV};
\end{scope}
\end{tikzpicture}
\end{center} \caption{Relations AS, IHX and LV on Jacobi diagrams.} \label{figrelations1}
\end{figure}
A symmetric tensor in $S^n\Lambda^3 V$ can be represented by a Jacobi diagram {\em via} the following embedding. 
$$\begin{array}{c c c}
 S^n\Lambda^3 V & \to & \A_\Q(V) \\
 (u_1\wedge v_1\wedge w_1)\dots(u_n\wedge v_n\wedge w_n) & \mapsto & \rat{$u_1$}{$v_1$}{$w_1$}\sqcup\dots\sqcup\rat{$u_n$}{$v_n$}{$w_n$}
\end{array}$$
Now define a {\em $\Qt$--beaded Jacobi diagram} as a trivalent graph whose vertices are oriented and whose edges are oriented and labelled by $\Qt$. 
Set:
$$\tA_{\Qt}(\varnothing)=\frac{\Q\langle \Qt\textrm{--beaded Jacobi diagrams}\rangle}{\Q\langle\textrm{AS, IHX, LE, Hol, OR}\rangle},$$
where the relations are depicted in Figures \ref{figrelations1} and \ref{figrelations2}, 
\begin{figure}[htb] 
\begin{center}
\begin{tikzpicture} [scale=0.3]
\begin{scope} 
 \draw (0,2) node{$x$};
 \draw[dashed] (1,0) -- (1,4);
 \draw[->,dashed] (1,0) -- (1,3);
 \draw (1.8,2.8) node{$P$};
 \draw (3.2,2) node{$+$};
 \draw (4.7,2) node{$y$};
 \draw[dashed] (5.7,0) -- (5.7,4);
 \draw[->,dashed] (5.7,0) -- (5.7,3);
 \draw (6.5,2.8) node{$Q$};
 \draw (7.9,2) node{$=$};
 \draw[dashed] (9.4,0) -- (9.4,4);
 \draw[->,dashed] (9.4,0) -- (9.4,3);
 \draw (12.4,2.8) node{$xP+yQ$};
 \draw (6.5,-1.5) node{LE};
\end{scope}
\begin{scope} [xshift=22cm,yshift=1cm,scale=0.9]
 \newcommand{\edge}[1]{
 \draw[rotate=#1,dashed] (0,0) -- (0,3);
 \draw[rotate=#1,->,dashed] (0,0) -- (0,1.5);}
 \edge{0} \draw (0,1.5) node[right] {$P$};
 \edge{120} \draw (-1.5,-0.75) node[above] {$Q$};
 \edge{240} \draw (1,-0.9) node[above right] {$R$};
 \draw (4,0) node{$=$};
\begin{scope} [xshift=8cm]
 \edge{0} \draw (0,1.5) node[right] {$tP$};
 \edge{120} \draw (-1.5,-0.75) node[above] {$tQ$};
 \edge{240} \draw (1,-0.9) node[above right] {$tR$};
\end{scope}
 \draw (4,-3.5) node{Hol};
\end{scope}
\begin{scope} [xshift=38cm]
 \draw[dashed] (0,0) -- (0,4);
 \draw[->] (0,2.9) -- (0,3) node[right] {$P(t)$};
 \draw (4,2) node{$=$};
 \draw[dashed] (5.5,0) -- (5.5,4);
 \draw[->] (5.5,3.1) -- (5.5,3) node[right] {$P(t^{-1})$};
 \draw (4,-1.5) node{OR};
\end{scope}
\end{tikzpicture}
\end{center} \caption{Relations LE, Hol and OR on Jacobi diagrams.} \label{figrelations2}
\end{figure}
with the IHX relation defined with the central edge labelled by 1. 
Define the {\em i--degree}, or {\em internal degree}, of any Jacobi diagram as its number of trivalent vertices. Given a hermitian form $\ell:V\times V\to\Qt$, 
one can {\em glue with $\ell$} some legs of a $V$--colored Jacobi diagram as depicted in Figure \ref{figglue}. 
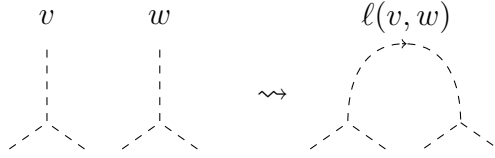
\begin{figure}[htb]
\begin{center}
\begin{tikzpicture} [yscale=0.7]
 \foreach \x in {0,1.5} {
 \draw[dashed] (\x-0.5,0) -- (\x,0.5) -- (\x,2) (\x+0.5,0) -- (\x,0.5);}
 \draw (0,2.5) node {$v$} (1.5,2.5) node {$w$} (3,1) node {$\rightsquigarrow$};
 \draw[dashed] (4.5,0) -- (4,0.5) (3.5,0) -- (4,0.5) .. controls +(0,0.9) and +(-0.5,0) .. (4.75,2) .. controls +(0.5,0) and +(0,0.9) .. (5.5,0.5) -- (5,0) (5.5,0.5) -- (6,0);
 \draw[->] (4.72,2) -- (4.78,2) node[above] {$\ell(v,w)$};
\end{tikzpicture}
\end{center} \caption{Gluing some legs of a Jacobi diagram with $\ell$.} \label{figglue}
\end{figure}
If $n$ is even, one can pairwise glue all legs of a $V$--colored Jacobi diagram of i--degree $n$ in order to 
get an element of $\tA_{\Qt}(\varnothing)$. This latter space is the target space of the Kricker invariant of $\Q$SK--pairs. 

We can now state our main result, about the Kricker invariant $\tZ$, proved in Section~\ref{secformules}. Note that null LP--surgeries define a move on the set of $\Q$SK--pairs. 
\begin{theorem} \label{thmain}
 Let $(S,\kappa)$ be a $\Q$SK--pair. Let $\chir=(\chir_1,\dots,\chir_n)$ be a null LP--surgery on $(S,\kappa)$. Then:
 $$\sum_{I\subset\{1,\dots,n\}}(-1)^{|I|}\tZ\left((S,\kappa)(\chir_I)\right)\equiv_n
 \left(\textrm{\begin{partext}{6cm} sum of all ways of gluing all legs of $\mu(\chir)$ with $\ell_{(S,\kappa)}(\chir)/2$\end{partext}}\right),$$
 where $\equiv_n$ means ``equal up to terms of i--degree at least $n+1$''.
\end{theorem}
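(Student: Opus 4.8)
\medskip
\noindent\textbf{Proof proposal.} The plan is to deduce the formula from the functoriality of the extended invariant constructed in this paper, following Massuyeau \cite{Mas}. Write $F$ for this functorial invariant; it restricts, on cobordisms without paths, to the LMO functor of Cheptea--Habiro--Massuyeau \cite{CHM}, and $\tZ$ is recovered from it on $\Q$SK--pairs. The first step is to fix, once and for all, a presentation of $(S,\kappa)$ in which the complement $B:=S\setminus(\sqcup_i C_i)$ appears as a single morphism of $\tlcob$ carrying the path $\kappa$, while each $\Q$--handlebody $C_i$, resp.\ $C_i'$, appears as a plug morphism $Y_i$, resp.\ $Y_i'$, that carries no path and is glued to the boundary component of $B$ along $\partial C_i$. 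The nullity of the $C_i$ in $S\setminus\kappa$ is exactly what allows $B$ to live in the beaded category (the infinite cyclic cover $\tilde E$ being compatible with removing the $C_i$), and is what makes $\ell_{(S,\kappa)}(\chir)$ well defined. With this presentation, $(S,\kappa)(\chir_I)$ is reconstructed by filling $B$ with $Y_i'$ for $i\in I$ and $Y_i$ for $i\notin I$; applying $F$ and using monoidality, the left-hand side of Theorem~\ref{thmain} becomes
$$\sum_{I\subset\{1,\dots,n\}}(-1)^{|I|}\,\tZ\bigl((S,\kappa)(\chir_I)\bigr)\;=\;F(B)\circ\bigotimes_{i=1}^n\bigl(F(Y_i)-F(Y_i')\bigr),$$
where $\circ$ is composition in $\tlcob$, realised on Jacobi diagrams by the prescribed contraction of legs. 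It remains to analyse the two factors on the right to leading order in the i--degree.

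For the plugs: since $\chir_i=\left(\frac{C_i'}{C_i}\right)$ is an LP--pair, $C_i$ and $C_i'$ induce the same maps on rational homology through their common boundary and have the same Lagrangian. As a preliminary lemma I would show that this forces $F(Y_i)$ and $F(Y_i')$ to have the same strut part --- so $F(Y_i)-F(Y_i')$ has i--degree at least $1$ --- and that its i--degree $1$ part is exactly the tripod representing $\mu(\chir_i)\in\Lambda^3 H_1(\totC_i;\Q)$, with its three legs colored by classes of curves on $\partial C_i$ (and with rational, not $\Qt$, coefficients, since $\mu(\chir_i)$ is an invariant of the closed manifold $\totC_i$ while $Y_i$ carries no path). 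This is the diagrammatic incarnation, inside the functor, of the classical fact that the first invariant separating two $\Q$--handlebodies with the same boundary Lagrangian is Milnor's triple linking, i.e.\ the triple cup product. I expect this step --- showing that the LP--hypothesis kills everything below i--degree $1$ and pinning down the i--degree $1$ term as $\mu(\chir_i)$, all within the rational-lift formalism, including the bead bookkeeping --- to be the main obstacle of the proof.

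For the cobordism $B$: the i--degree counts trivalent vertices and is additive under the contraction realising $\circ$, so $\bigotimes_i\bigl(F(Y_i)-F(Y_i')\bigr)$ has i--degree at least $n$, with i--degree $n$ part the disjoint union of tripods representing $\mu(\chir)=\mu(\chir_1)\otimes\dots\otimes\mu(\chir_n)$, whose $3n$ legs are colored by classes of curves on the $\partial C_i$. A contribution of total i--degree exactly $n$ to $F(B)\circ\bigotimes_i\bigl(F(Y_i)-F(Y_i')\bigr)$ can thus only come from pairing these $3n$ legs against one another through the part of $F(B)$ with no trivalent vertex --- its strut part --- which, by the way the functorial lift is built from equivariant linking in $\tilde E$, is the beaded propagator: gluing a leg colored by $a\in H_1(\totC_i;\Q)$ to one colored by $b\in H_1(\totC_j;\Q)$, represented by curves $\alpha\subset\partial C_i$, $\beta\subset\partial C_j$ (disjoint when $i=j$), contributes $\lk_e(\tilde\alpha,\tilde\beta)=\ell_{(S,\kappa)}(\chir)(a,b)\in\Qt$, with the factor $\tfrac12$ normalising struts in $\tA_{\Qt}(\varnothing)$. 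Summing over all pairings of the $3n$ legs therefore reproduces exactly ``the sum of all ways of gluing all legs of $\mu(\chir)$ with $\ell_{(S,\kappa)}(\chir)/2$''. Any remaining contribution uses a trivalent vertex of $F(B)$ or an i--degree $\geq 2$ term of some $F(Y_i)-F(Y_i')$, hence has total i--degree $\geq n+1$: this is what $\equiv_n$ records. (For $n$ odd both sides vanish in i--degree $n$, since closed beaded Jacobi diagrams have even i--degree.)

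Finishing the argument then reduces to routine verifications: that $F$ recovers $\tZ$ on $\Q$SK--pairs; that the presentation $B,Y_i,Y_i'$ exists and is compatible with a choice of lifts $\tilde C_i$ (Theorem~\ref{thmain} being independent of that choice, as noted); and that the symmetrisation and the signs $(-1)^{|I|}$ are tracked correctly, so that $\mu(\chir)$ is the asserted element of $S^n\Lambda^3 H_1(\totC;\Q)$. All of these are bookkeeping inside the machinery of the earlier sections.
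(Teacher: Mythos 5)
Your proposal is correct and follows essentially the same route as the paper: decompose $(S,\kappa)$ as a complement cobordism carrying the path composed with unbeaded handlebody plugs, factor the alternating sum by functoriality, identify the i--degree~$1$ part of each plug difference with $\mu(\chir_i)$, and pair the resulting tripod legs through the strut part of the complement, which is the equivariant linking (winding) matrix. The step you single out as the main obstacle is exactly Massuyeau's lemma (quoted as Lemma~\ref{lemmaboundpara} and the lemma following it), which applies verbatim here because, as you note, the plugs carry no paths and so live in the unbeaded category.
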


\paragraph{Example.}
Let $(S^3,\mathcal{O})$ be the $\Q$SK--pair defined by the trivial knot $\mathcal{O}$ in the standard $3$--sphere. Let $C_1$ and $C_2$ be regular neighborhoods of the graphs $\Gamma_1$ and $\Gamma_2$ drawn in Figure \ref{figex}. 
\begin{figure}[htb] 
\begin{center}
\begin{tikzpicture} [scale=0.8]
 \newcommand{\drawb}[1]{\draw[white,line width=4pt] #1; \draw #1;}
 \draw (4,5) ellipse (4 and 1);
 \draw (6,0.7) arc (90:180:0.7);
 \foreach \x in {5,...,1} {\drawb{(\x+0.7,0) arc (0:180:0.7)}}
 \foreach \x in {1,...,6} {\drawb{(\x-0.7,0) arc (-180:0:0.7)}}
 \foreach \x in {1,...,6} {\draw[->] (\x-0.05,-0.7) -- (\x,-0.7);}
 \foreach \x in {1,2,3} {\draw (2*\x-1,-0.7) node[below] {$\zeta_\x$};}
 \foreach \x in {1,2,3} {\draw (2*\x,-0.7) node[below] {$\xi_\x$};}
 \drawb{(2.6,4.2) .. controls +(0,0.7) and +(0,1.5) .. (3,3)}
 \draw (3,0.7) -- (3,3) node {$\scriptstyle{\bullet}$};
 \draw (3,3) .. controls +(1,0) and +(0,1) .. (5,0.7);
 \drawb{(4,2) .. controls +(-1,0) and +(0,1) .. (2,0.7)}
 \draw[->] (4.1,6) -- (4,6) node[above] {$\mathcal{O}$};
 \drawb{(1,0.7) .. controls +(0,3) and +(0,1) .. (2.2,4.2)}
 \draw (2.2,4) -- (2.2,1.5);
 \drawb{(2.2,1.2) .. controls +(0,-1) and +(0,-3) .. (2.6,3.95)}
 \foreach \x in {1,...,5} {\draw (\x+0.7,0) arc (0:100:0.7);}
 \drawb{(7.35,4.6) .. controls +(0.1,0.5) and +(0,0.5) .. (7.6,4)}
 \drawb{(6.65,4.4) .. controls +(-0.3,1.5) and +(0.7,0) .. (6,0.7)}
 \draw (7.6,4) .. controls +(0,-1) and +(0,1) .. (6.7,0);
 \drawb{(4,2) -- (7,3)}
 \draw (4,0.7) -- (4,2) node {$\scriptstyle{\bullet}$};
 \draw (7,3) .. controls +(-0.2,0) and +(0.1,-0.5) .. (6.7,4.15);
 \draw (7,3) .. controls +(0.2,0) and +(-0.1,-0.5) .. (7.3,4.3);
 \draw (0.7,2) node {$\Gamma_1$};
 \draw (7.6,2) node {$\Gamma_2$};
\end{tikzpicture} \caption{Surgery datum in $(S^3,\mathcal{O})$} \label{figex}
\end{center} 
\end{figure}
One can define an LP--surgery $\chir=(\chir_1,\chir_2)$ by associating with each $\Gamma_i$ a Borromean surgery, see for instance \cite[Section~2.2]{M7}. The associated tensor is given by $\mu(\chir_1)=\zeta_1\wedge\zeta_2\wedge\zeta_3$ and $\mu(\chir_2)=\xi_1\wedge\xi_2\wedge\xi_3$. There are fifteen ways to glue all legs of $$\mu(\chir)=\rat{$\zeta_1$}{$\zeta_2$}{$\zeta_3$}\rat{$\xi_1$}{$\xi_2$}{$\xi_3$}$$ with $\frac12\ell_{(S^3,\mathcal{O})}(\chir)$; all associated diagrams but one are trivial by the relation LE since they have a trivially labelled edge. Now $\lk_e(\zeta_1,\xi_1)=1$, $\lk_e(\zeta_2,\xi_2)=1$ and $\lk_e(\zeta_3,\xi_3)=t^{-1}$, so that we finally get:
$$\tZ\left((S^3,\mathcal{O})(\chir)\right)+\tZ\left((S^3,\mathcal{O})\right)\equiv_2 -\raisebox{-0.8cm}{
\begin{tikzpicture} [scale=0.3] 
  \draw[dashed] (0,0) .. controls +(0,2) and +(0,2) .. (4,0);
  \draw[dashed] (0,0) .. controls +(0,-2) and +(0,-2) .. (4,0);
  \draw[dashed] (0,0) node {$\scriptscriptstyle{\bullet}$} -- (4,0) node {$\scriptscriptstyle{\bullet}$};
  \draw[->] (1.95,-1.5) -- (2,-1.5) node[below] {$\scriptstyle{1}$};
  \draw[->] (1.95,0) -- (2,0) node[below] {$\scriptstyle{1}$};
  \draw[->] (1.95,1.5) -- (2,1.5) node[above] {$\scriptstyle{t}$};
 \end{tikzpicture}},$$
 where the vanishing of two terms in the left hand side is due to \cite[Lemma~2.2]{GGP}.

    \subsection{Strategy}

In this Subsection, we give a rough overview of the strategy developed to prove Theorem~\ref{thmain}. 

The main object of this article is the construction of a functorial LMO invariant defined on a category of Lagrangian cobordisms with paths. 
The morphisms of this category are cobordisms between compact surfaces with one boundary component, satisfying a Lagrangian-preserving condition, 
with finitely many disjoint paths with fixed extremities which we think of as knots with a fixed part on the boundary. 
This category is equivalent to a category of bottom-top tangles in $\Q$--cubes, whose top part has a trivial linking matrix, with paths with fixed extremities. 
These bottom-top tangles can be viewed as morphisms in a category of (general) tangles with paths in $\Q$--cubes, with an important difference in the composition law. 
Now a tangle with paths in a $\Q$--cube can be expressed as the result of a surgery on a link in a tangle with trivial paths ---segment lines--- in $[-1,1]^3$. 
To sum up, with a Lagrangian cobordism with paths, we associate a tangle with disks ---whose boundaries define the paths--- in $[-1,1]^3$ with a surgery link. 
This is represented in the first line of the scheme in Figure \ref{figscheme}. We initiate the construction of the invariant at the ``tangle with disks'' level. 
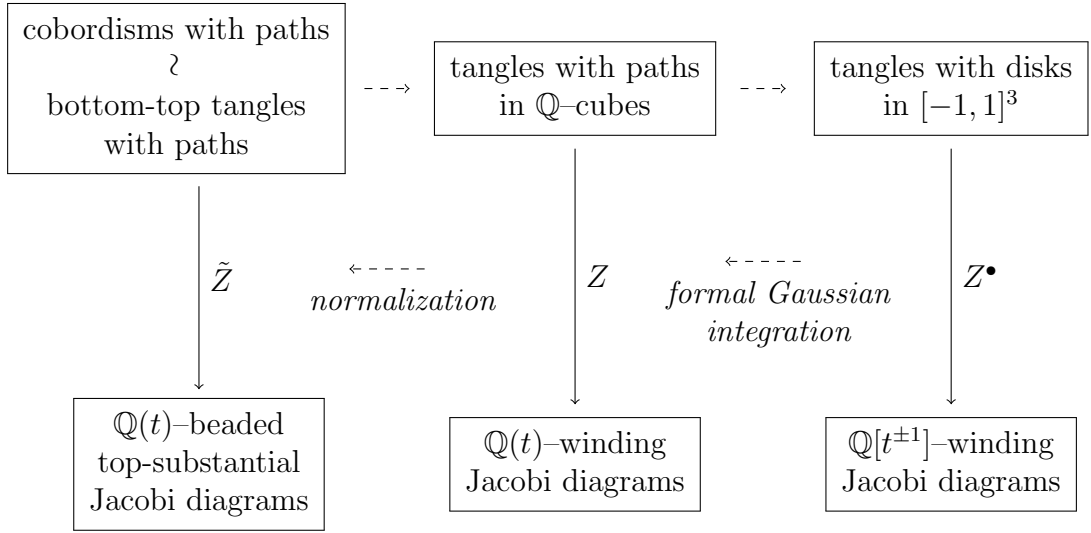
\begin{figure}[htb] 
\begin{center}
\begin{tikzpicture}
 \draw (-0.3,5) node {\framebox[\width]{\begin{tabular}{c}
                    cobordisms with paths \\
                    \rotatebox{90}{$\sim$} \\
                    bottom-top tangles \\ with paths
                   \end{tabular}}}; 
 \draw[dashed,->] (2.2,5) -- (2.8,5);
 \draw (5,5) node {\framebox[\width]{\begin{tabular}{c}
                                        tangles with paths \\ in $\Q$--cubes
                                       \end{tabular}}};
 \draw[dashed,->] (7.2,5) -- (7.8,5);
 \draw (10,5) node {\framebox[\width]{\begin{tabular}{c}
                     tangles with disks \\
                     in $[-1,1]^3$
                    \end{tabular}}};
 \draw[->] (0,3.7) -- (0,1);
 \draw[->] (5,4.2) -- (5,0.8);
 \draw[->] (10,4.2) -- (10,0.8);
 \draw (0,2.5) node[right] {$\tZ$} (5,2.5) node[right] {$Z$} (10,2.5) node[right] {$\Zp$};
 \draw (0,0) node {\framebox[\width]{\begin{tabular}{c}
                    $\Qt$--beaded \\
                    top-substantial \\
                    Jacobi diagrams
                   \end{tabular}}}; 
 \draw (5,0) node {\framebox[\width]{\begin{tabular}{c}
                                        $\Qt$--winding \\ Jacobi diagrams
                                       \end{tabular}}};
 \draw (10,0) node {\framebox[\width]{\begin{tabular}{c}
                     $\Qtt$--winding \\ Jacobi diagrams
                    \end{tabular}}};
 \draw[dashed,->] (3,2.6) -- (2,2.6);
 \draw (2.7,2.2) node {{\em normalization}};
 \draw[dashed,->] (8,2.7) -- (7,2.7);
 \draw (7.7,2) node {\begin{tabular}{c}
                        {\em formal Gaussian} \\ {\em integration}
                       \end{tabular}};
\end{tikzpicture}
\end{center} \caption{Scheme of construction for the invariant $\tZ$.} \label{figscheme}
\end{figure}

On the above mentioned categories, we define functorial invariants valued in categories of Jacobi diagrams with beads, {\em i.e.} unitrivalent graphs whose 
univalent vertices are labelled by some finite set or embedded in some 1--manifold ---the skeleton--- and whose edges are labelled (beaded) by powers of $t$, 
polynomials in $\Qtt$ or rational functions in $\Qt$. At the first step, we define a functor $\Zp$ on the category of tangles with disks by applying 
the Kontsevich integral and adding a bead $t^{\pm1}$ on the skeleton when the corresponding component meets a disk of the tangle. At a second step, 
we apply the invariant $\Zp$ to surgery presentations of tangles with paths in $\Q$--cubes. We use the formal Gaussian integration methods introduced by 
Bar-Natan, Garoufalidis, Rozansky and Thurston in \cite{AA1,AA2} and adapted to the beaded setting in \cite{Kri,GK}. We get a functor $Z$ on the category 
of tangles with paths in $\Q$--cubes. At the last step, given a Lagrangian cobordism with paths, we apply $Z$ to the associated bottom-top tangles with paths 
and normalize it following \cite{CHM} to obtain a functor $\tZ$ on the category of Lagrangian cobordisms with paths. Functoriality allows to prove 
splitting formulas for this invariant with respect to null Lagrangian-preserving surgeries.

Given a Lagrangian cobordism with one path between genus 0 surfaces, {\em i.e.} a $\Q$--cube with one path, one can glue a 3--ball to the boundary to get 
a $\Q$--sphere with a knot. In this way, the functor $\tZ$ provides an invariant of $\Q$SK--pairs which coincides with the Kricker 
invariant for knots in $\Z$--spheres. Splitting formulas for this invariant are deduced from the splitting formulas for the functor $\tZ$. 

\paragraph{Plan of the paper.}
We define the domain categories of cobordisms and tangles in Section \ref{secdomaincat}. In Section \ref{sectargetcat}, we define the target categories of 
Jacobi diagrams and gives the tools of formal Gaussian integration. Section \ref{secwinding} is devoted to the introduction of winding matrices, that will 
play the role of the linking matrices in the Cheptea--Habiro--Massuyeau construction of a functorial LMO invariant. The functors $\Zp$, $Z$ and $\tZ$ are constructed 
in Section \ref{secfunctor}. At the end of this section, from the functor $\tZ$, we deduce our version of the Kricker invariant for $\Q$SK--pairs; 
the behaviour of this invariant with respect to connected sum is stated. Finally, the splitting formulas are given in Section \ref{secformules}.

    \section{Domain categories: cobordisms and tangles} \label{secdomaincat}

  \subsection{Cobordisms with paths} \label{subseccob}

Given $g\in\N$, we fix a model surface $F_g$, compact, connected, oriented, of genus $g$, with one boundary 
component represented in Figure \ref{figFg}. 
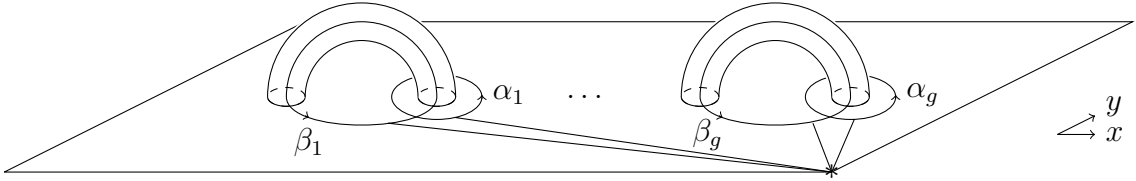
\begin{figure}[htb] 
\begin{center}
\begin{tikzpicture} [scale=0.5]
 \draw (0,0) -- (8,4) -- (30,4) -- (22,0) -- (0,0);
 \draw (22,0) node {$*$};
 \draw (11.5,2) circle (1.2 and 0.6) (12,1.45) -- (22,0);
 \draw[->] (12.7,1.95) -- (12.7,2.05) node[right] {$\alpha_1$};
 \draw (11.5,2) circle (0.5 and 0.25) (7.5,2) circle (0.5 and 0.25);
 \draw [white,line width=16pt] (11.5,2) arc (0:180:2);
 \draw (12,2) arc (0:180:2.5);
 \draw (11,2) arc (0:180:1.5);
 \draw[dashed] (11.5,2) circle (0.5 and 0.25) (7.5,2) circle (0.5 and 0.25);
 \draw (11.5,2) arc (0:180:2) .. controls +(0,-1) and +(0,-1) .. (11.5,2) (10.2,1.3) -- (22,0);
 \draw[->] (8,1.5) -- (8.1,1.45) node[below] {$\beta_1$};
 \draw (15.5,2) node {$\dots$};
 \draw (22.5,2) circle (1.2 and 0.6) (22.6,1.4) -- (22,0);
 \draw[->] (23.7,1.95) -- (23.7,2.05) node[right] {$\alpha_g$};
 \draw (22.5,2) circle (0.5 and 0.25) (18.5,2) circle (0.5 and 0.25);
 \draw [white,line width=16pt] (22.5,2) arc (0:180:2);
 \draw (23,2) arc (0:180:2.5);
 \draw (22,2) arc (0:180:1.5);
 \draw[dashed] (22.5,2) circle (0.5 and 0.25) (18.5,2) circle (0.5 and 0.25);
 \draw (22.5,2) arc (0:180:2) .. controls +(0,-1) and +(0,-1) .. (22.5,2) (21.5,1.3) -- (22,0);
 \draw[->] (19,1.5) -- (19.1,1.45);
 \draw (18.7,1) node {$\beta_g$};
 \draw[->] (28,1) -- (29,1) node[right] {$x$};
 \draw[->] (28,1) -- (29,1.5); 
 \draw (29,1.75) node[right] {$y$};
\end{tikzpicture}
\end{center} \caption{The model surface $F_g$.} \label{figFg}
\end{figure}
It is equipped with a fixed base point $*$ and a fixed basis $(\alpha_1,\dots,\alpha_g,\beta_1,\dots,\beta_g)$ of $\pi_1(F_g,*)$. 
Denote by $\cub$ the cube $[-1,1]^3$ with $g^+$ handles on the top 
boundary and $g^-$ tunnels in the bottom boundary. We have canonical embeddings $F_{g^+}\hookrightarrow\partial\cub$ and $F_{g^-}\hookrightarrow\partial\cub$. For $k\geq 0$ and $1\leq i\leq k$, set $h_i(k)=\frac{2i}{k+1}-1$. A {\em cobordism with paths} from $F_{g^+}$ to $F_{g^-}$ is an equivalence class of triples $(M,K,m)$ where:
\begin{itemize}
 \item $M$ is a compact, connected, oriented 3--manifold,
 \item $m:\partial\cub\fl{\cong}\partial M$ is an orientation-preserving homeomorphism,
 \item $K=\sqcup_{i=1}^k K_i\subset M$ is a union of $k\geq 0$ oriented paths $K_i$ from $m(0,1,h_i(k))$ to $m(0,-1,h_i(k))$,
 \item $\bK=\sqcup_{i=1}^k \bK_i$ is an oriented {\em boundary link}, {\em i.e.} the $\bK_i$ bound disjoint compact oriented surfaces in $M$, where $\bK_i$ is the knot defined as the union of $K_i$ with the line segments $[(0,-1,h_i(k)),(1,-1,h_i(k))]$, $[(1,-1,h_i(k)),(1,1,h_i(k))]$ and 
  $[(1,1,h_i(k)),(0,1,h_i(k))]$.
\end{itemize}
\begin{figure}[htb]
\begin{center}
\begin{tikzpicture} [scale=0.2]
 \newcommand{\surface}[2]{
 \begin{scope} [xshift=#1cm,yshift=#2cm]
 \draw (0,0) -- (8,4) -- (30,4) -- (22,0) -- (0,0);
 \draw (11.5,2) circle (0.5 and 0.25) (7.5,2) circle (0.5 and 0.25);
 \draw [white,line width=8pt] (11.5,2) arc (0:180:2);
 \draw[densely dashed] (11.5,2) circle (0.5 and 0.25) (7.5,2) circle (0.5 and 0.25);
 \draw (12,2) arc (0:180:2.5);
 \draw (11,2) arc (0:180:1.5);
 \draw (15.5,2) node {$\dots$};
 \draw (22.5,2) circle (0.5 and 0.25) (18.5,2) circle (0.5 and 0.25);
 \draw [white,line width=8pt] (22.5,2) arc (0:180:2);
 \draw[densely dashed] (22.5,2) circle (0.5 and 0.25) (18.5,2) circle (0.5 and 0.25);
 \draw (23,2) arc (0:180:2.5);
 \draw (22,2) arc (0:180:1.5);
 \end{scope}}
 \foreach \x in {30,52} {\draw (\x,4) -- (\x,19);}
 \surface{22}{0}
 \foreach \y in {5,10} {\draw[red,dashed] (52,\y+4) -- (41,\y+4);}
 \draw[red] (33,5) .. controls +(2,1) and +(2,-1) .. (40,7) .. controls +(-2,1) and +(-1,2) .. (41,9);
 \draw[red,->] (33.9,12) -- (33.8,11.9) node[left] {$K_2$};
 \draw[red] (33,10) .. controls +(0,2) and +(0,2) .. (36,12) .. controls +(0,-2) and +(0,-2) .. (38,14) .. controls +(0,2) and +(0,2) .. (41,14);
 \draw[red,->] (36,5.72) -- (35.9,5.7) node[above] {$K_1$};
 \draw[white,line width=3pt] (22,15) -- (45,15);
 \surface{22}{15}
 \foreach \x in {22,44} {\draw[white,line width=3pt] (\x,0.5) -- (\x,14.6); \draw (\x,0) -- (\x,15);}
 \foreach \y in {5,10} {\draw[red,dashed] (33,\y) -- (44,\y) -- (52,\y+4);}
\end{tikzpicture} \caption{A Lagrangian cobordism with paths} \label{figLagcob}
\end{center}
\end{figure}
Two such triples are {\em equivalent} if they are related by an orientation-preserving homeomorphism which respects the boundary parametrizations 
and identifies the paths. We get embeddings $m_+:F_{g^+}\hookrightarrow\partial M$ and $m_-:F_{g^-}\hookrightarrow\partial M$.

Define a category $\tcob$ of cobordisms with paths whose objects are non-negative integers and whose set of morphisms $\tcob(g^+,g^-)$ is the set of 
cobordisms with paths from $F_{g^+}$ to $F_{g^-}$. The composition of a cobordism $(M,K,m)$ from $F_g$ to $F_f$ with a cobordism $(N,J,n)$ from $F_h$ to $F_g$ 
is given by gluing $N$ on the top of $M$, identifying $m_+(M)$ with $n_-(N)$ and reparametrizing the new manifold. The identity of $g\in\N$ is the cobordism 
$F_g\times[-1,1]$ with natural boundary parametrization and no path. 

Forgetting the datum of the paths in the cobordisms, one gets the category $\cob$ des\-cri\-bed in \cite{CHM}, which we view as the subcategory of $\tcob$ 
of cobordisms with no path. For a cobordism $(M,m)$ and a cobordism with paths $(N,J,n)$, define the tensor product $(M,m)\otimes(N,J,n)$ by horizontal 
juxtaposition in the $x$ direction. 

We now define the subcategory of $\tcob$ of Lagrangian cobordisms with paths. Set $A_g=\ker(\mathrm{incl}_*:H_1(F_g;\Q)\to H_1(C^g_0;\Q))$ 
and $B_g=\ker(\mathrm{incl}_*:H_1(F_g;\Q)\to H_1(C^0_g;\Q))$. These are Lagrangian subspaces of $H_1(F_g;\Q)$ with respect to the intersection form 
and $A_g$ (resp. $B_g$) is generated by the homology classes of the curves $\alpha_i$ (resp. $\beta_i$). 
A cobordism with paths $(M,K,m)$ from $F_{g^+}$ to $F_{g^-}$ is {\em Lagrangian(-preserving)} if the following conditions are satisfied:
\begin{itemize}
 \item $H_1(M;\Q)=(m_-)_*(A_{g^-})\oplus(m_+)_*(B_{g^+})$,
 \item $(m_+)_*(A_{g^+})\subset(m_-)_*(A_{g^-})$ as subspaces of $H_1(M;\Q)$. 
\end{itemize}
The Lagrangian property is preserved by composition, and we denote by $\tlcob$ the subcategory of $\tcob$ of Lagrangian cobordisms with paths. 
The subcategory of Lagrangian cobordisms with no path is the category $\lcob$ ---denoted by $\Q\lcob$ in \cite{Mas}. 

Define categories $\cob_q$, $\tcob_q$, $\lcob_q$ and $\tlcob_q$ of $q$--cobordisms with objects the non-commutative words in the single letter $\bullet$ and with set of morphisms from a word on $g^+$ letters to a word on $g^-$ letters the set of morphisms from $g^+$ to $g^-$ in $\cob$, $\tcob$, $\lcob$ and $\tlcob$ respectively. 

Let $(M,K,m)$ be a Lagrangian cobordism with paths from $g^+$ to $g^-$. Since the space $H_1(M;\Q)$ is non-trivial in general, we have to adapt the definition of null LP--surgeries given in the introduction. Let $N(K)$ be a tubular neighborhood of $K$ in $M$ and set $E=M\setminus\textrm{Int}(N(K))$. A standard homological computation gives $H_2(M;\Q)=0$, so that the exact sequence in homology associated with the pair $(M,E)$ provides the following short exact sequence:
$$0\to H_2(N(K),N(K)\cap E;\Q)\to H_1(E;\Q)\to H_1(M;\Q)\to 0.$$
The image of $H_2(N(K),N(K)\cap E;\Q)$ in $H_1(E;\Q)$ is a subspace $H_K\cong\Q^k$ generated by meridians of the components of $K$, where $k$ is the number of these components. Now, the parametrizations $m_{\pm}$ of the top and bottom boundaries of $M$ can be decomposed into injective maps as follows.
\begin{center}
\begin{tikzpicture} 
 \draw (0,-0.2) node {$E$};
 \draw (-1,1) node {$F_{g^\pm}$};
 \draw (1,1) node {$M$};
 \draw[->] (-0.5,1) -- (0.5,1);
 \draw (0,1) node[above] {$m_\pm$};
 \draw[->] (-0.7,0.7) -- (-0.2,0);
 \draw (-0.4,0.3) node[left] {$e_\pm$};
 \draw[<-] (0.7,0.7) -- (0.2,0);
\end{tikzpicture}
\end{center} 
Hence we have a canonical decomposition of $H_1(E;\Q)$ as 
$$H_1(E;\Q)=H_K\oplus(e_-)_*(A_{g^-})\oplus(e_+)_*(B_{g^+}),$$
where $(e_-)_*(A_{g^-})\oplus(e_+)_*(B_{g^+})\cong H_1(M;\Q)$ {\em via} the inclusion map. 
We say that a $\Q$--handlebody $C\subset M\setminus K$ is \emph{null with respect to $K$} if the composed map $$H_1(C;\Q)\fl{incl_*} H_1(M\setminus K;\Q)\longrightarrow H_K$$ has a trivial image, where the second map is the projection on the first factor in the above decomposition of $H_1(M\setminus K;\Q)\cong H_1(E;\Q)$.
A \emph{null LP--surgery} on $(M,K)$ is an LP--surgery $\chir=(\chir_1,\dots,\chir_n)$ on $M\setminus K$ such that each $C_i$ is null with respect to $K$.

   \subsection{Bottom-top tangles with paths} \label{subsecbtt}

Let us define the category of bottom-top tangles with paths. For a positive integer $g\geq0$, let $(p_1,q_1),\dots,(p_g,q_g)$ be $g$ pairs of points 
uniformly distributed on $[-1,1]\times\{0\}\subset[-1,1]^2\cong F_0$ as represented in Figure \ref{figpiqi}. 
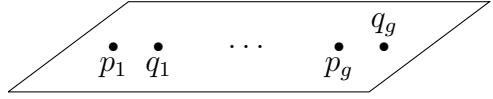
\begin{figure}[htb] 
\begin{center}
\begin{tikzpicture} [xscale=0.4,yscale=0.2]
 \draw (0,0) -- (4,6) -- (16,6) -- (12,0) -- (0,0);
 \draw (3.5,3) node {$\scriptstyle{\bullet}$};
 \draw (3.5,3) node[below] {$p_1$};
 \draw (5,3) node {$\scriptstyle{\bullet}$};
 \draw (5,3) node[below] {$q_1$};
 \draw (8,3) node {$\dots$};
 \draw (11,3) node {$\scriptstyle{\bullet}$};
 \draw (11,3) node[below] {$p_g$};
 \draw (12.5,3) node {$\scriptstyle{\bullet}$};
 \draw (12.5,3) node[above] {$q_g$};
\end{tikzpicture}
\end{center} \caption{The pairs of points $(p_i,q_i)$ on $[-1,1]^2$.} \label{figpiqi}
\end{figure}
A {\em bottom-top tangle with paths} of type $(g^+,g^-)$ 
is an equivalence class of triples $(B,K,\gamma)$ where: 
\begin{itemize}
 \item $(B,K)=(B,K,b)$ is a cobordism with paths form $F_0$ to $F_0$,
 \item $\gamma=(\gamma^+,\gamma^-)$ is a framed oriented tangle in $B$ with $g^+$ components $\gamma_i^+$ from $b(\{p_i\}\times\{1\})$ to $b(\{q_i\}\times\{1\})$ 
  and $g^-$ components $\gamma_i^-$ from $b(\{q_i\}\times\{-1\})$ to $b(\{p_i\}\times\{-1\})$,
 \item $\bK$ is a boundary link in $B\setminus\gamma$.
\end{itemize}
Two such triples $(B,K,\gamma)$ and $(B',K',\gamma')$ are {\em equivalent} if $(B,K)$ and $(B',K')$ 
are related by an equivalence which identifies $\gamma$ and $\gamma'$. 

In order to define the composition, we need the bottom-top tangle $([-1,1]^3,\varnothing,T_g)$ represented in Figure \ref{figTg}. 
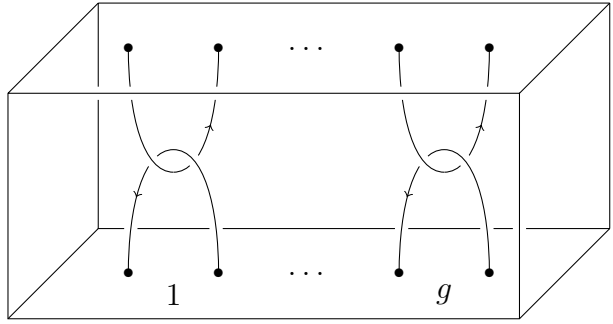
\begin{figure}[htb] 
\begin{center}
\begin{tikzpicture} [xscale=0.4,yscale=0.3]
 \draw (0,0) -- (3,4) -- (20,4) (20,14) -- (3,14) (3,4) -- (3,14) (20,4) -- (20,14); 
 \draw (5.5,6.5) .. controls +(1,0) and +(0,-3) .. (7,12);
 \draw[white,line width=5pt] (4,2) .. controls +(0,3) and +(-1,0) .. (5.5,7.5) .. controls +(1,0) and +(0,3) .. (7,2);
 \draw (4,2) .. controls +(0,3) and +(-1,0) .. (5.5,7.5) .. controls +(1,0) and +(0,3) .. (7,2);
 \draw[white,line width=5pt] (4,12) .. controls +(0,-3) and +(-1,0) .. (5.5,6.5);
 \draw (4,12) .. controls +(0,-3) and +(-1,0) .. (5.5,6.5);
 \draw (14.5,6.5) .. controls +(1,0) and +(0,-3) .. (16,12);
 \draw[white,line width=5pt] (13,2) .. controls +(0,3) and +(-1,0) .. (14.5,7.5) .. controls +(1,0) and +(0,3) .. (16,2);
 \draw (13,2) .. controls +(0,3) and +(-1,0) .. (14.5,7.5) .. controls +(1,0) and +(0,3) .. (16,2);
 \draw[white,line width=5pt] (13,12) .. controls +(0,-3) and +(-1,0) .. (14.5,6.5);
 \draw (13,12) .. controls +(0,-3) and +(-1,0) .. (14.5,6.5);
 \draw[white, line width=5pt] (0,10) -- (17,10) -- (17,0);
 \draw (0,10) -- (17,10) -- (17,0) (20,4) -- (17,0) -- (0,0) -- (0,10) -- (3,14) (17,10) -- (20,14);
 \draw (10,2) node {$\dots$} (10,12) node {$\dots$};
 \draw (5.5,2) node[below] {$1$} (14.5,2) node[below] {$g$};
 \foreach \x in {4,7,13,16} \foreach \y in {2,12} \draw (\x,\y) node {$\scriptstyle{\bullet}$};
 \draw[->] (6.71,8.5) -- (6.74,8.6);
 \draw[->] (15.71,8.5) -- (15.74,8.6);
 \draw[->] (4.29,5.5) -- (4.26,5.4);
 \draw[->] (13.29,5.5) -- (13.26,5.4);
\end{tikzpicture}
\end{center} \caption{The bottom-top tangle $T_g$ in $[-1,1]^3$.} \label{figTg}
\end{figure}
The composition of a bottom-top tangle 
$(B,K,\gamma)$ of type $(g,f)$ with a bottom-top tangle $(C,J,\upsilon)$ of type $(h,g)$ is given by first making the composition 
$(B,K)\circ([-1,1]^3,\varnothing)\circ(C,J)$ in the category $\tcob$ and then perfoming the surgery on the $2g$ components link $\gamma^+\cup T_g\cup\upsilon^-$. 
We get a category $\tbtt$ whose objects are non-negative integers and whose set of morphisms $\tbtt(g^+,g^-)$ is the set of bottom-top tangles with 
paths of type $(g^+,g^-)$. The identity of $g\in\N$ is the bottom-top tangle in $[-1,1]^3$ with no path represented in Figure \ref{figtangleIdg}.
\begin{figure}[htb] 
\begin{center}
\begin{tikzpicture} [xscale=0.4,yscale=0.3]
 \draw (0,0) -- (3,4) -- (20,4) (20,14) -- (3,14) (3,4) -- (3,14) (20,4) -- (20,14);
 \draw (4,12) .. controls +(0,-3) and +(-1,0) .. (5.5,6.5);
 \draww {(4,2) .. controls +(0,3) and +(-1,0) .. (5.5,7.5) .. controls +(1,0) and +(0,3) .. (7,2);}
 \draww {(5.5,6.5) .. controls +(1,0) and +(0,-3) .. (7,12);}
 \draw (13,12) .. controls +(0,-3) and +(-1,0) .. (14.5,6.5);
 \draww {(13,2) .. controls +(0,3) and +(-1,0) .. (14.5,7.5) .. controls +(1,0) and +(0,3) .. (16,2);}
 \draww {(14.5,6.5) .. controls +(1,0) and +(0,-3) .. (16,12);}
 \draw[white, line width=5pt] (0,10) -- (17,10) -- (17,0);
 \draw (0,10) -- (17,10) -- (17,0) (20,4) -- (17,0) -- (0,0) -- (0,10) -- (3,14) (17,10) -- (20,14);
 \draw (10,2) node {$\dots$} (10,12) node {$\dots$};
 \draw (5.5,2) node[below] {$1$} (14.5,2) node[below] {$g$};
 \foreach \x in {4,7,13,16} \foreach \y in {2,12} \draw (\x,\y) node {$\scriptstyle{\bullet}$};
 \draw[->] (6.71,8.5) -- (6.74,8.6);
 \draw[->] (15.71,8.5) -- (15.74,8.6);
 \draw[->] (4.29,5.5) -- (4.26,5.4);
 \draw[->] (13.29,5.5) -- (13.26,5.4);
\end{tikzpicture}
\end{center} \caption{The bottom-top tangle $Id_g$.} \label{figtangleIdg}
\end{figure}
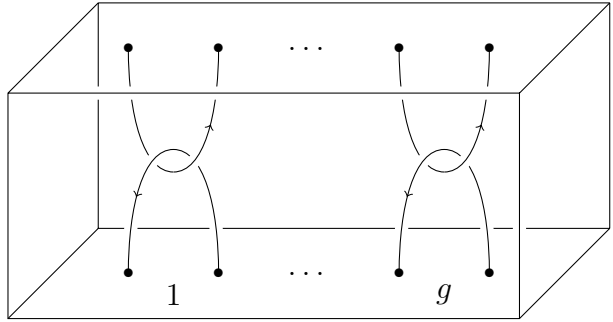

Forgetting the datum of the paths, one gets the category $\btt$ of bottom-top tangles introduced in \cite{CHM}, that we view as the subcategory of $\tbtt$ 
of bottom-top tangles with no path. For a bottom-top tangle $(B,\gamma)$ 
and a bottom-top tangle with paths $(C,J,\upsilon)$, define the tensor product $(B,\gamma)\otimes(C,J,\upsilon)$ by horizontal juxtaposition in the $x$ direction. 
Define categories $\btt_q$ and $\tbtt_q$ of bottom-top $q$--tangles with objects the non-commutative words in the single letter $\bullet$. 

The following result is a direct adaptation of \cite[Theorem 2.10]{CHM} which gives an isomorphism $D:\btt\to\cob$. The map $D$ is defined by digging 
tunnels around the components of the tangle. 
\begin{proposition} \label{propmapD}
 There is an isomorphism $D:\tbtt\to\tcob$ which identifies $\btt$ with $\cob$ and preserves the tensor product on $\btt\otimes\tbtt$. 
\end{proposition}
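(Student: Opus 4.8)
The plan is to follow the blueprint of \cite[Theorem 2.10]{CHM} verbatim, merely carrying the extra datum of the paths through the digging construction. Recall that the map $D:\btt\to\cob$ sends a bottom-top tangle $(B,\gamma)$ in a $\Q$--cube to the cobordism obtained by digging tunnels along the components of $\gamma$: one removes an open tubular neighborhood of each component $\gamma_i^\pm$ of $\gamma$ from $B$, which produces a $1$--handle on the corresponding (top or bottom) boundary, and one reparametrizes the resulting boundary to match the model surfaces $F_{g^+}$, $F_{g^-}$. The inverse sends a cobordism $M$ to the bottom-top tangle obtained by filling in the handles and tunnels of $\cub$ with $2$--handles whose cocores become the tangle components. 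First I would recall this construction in enough detail to fix conventions, and then check that it makes sense in the presence of paths.

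The key point is that the paths are disjoint from the tangle $\gamma$ and that the digging is supported in a neighborhood of $\gamma$, so the two data do not interfere. Concretely, given a bottom-top tangle with paths $(B,K,\gamma)$ of type $(g^+,g^-)$, I would set $D(B,K,\gamma)=(D(B,\gamma),K)$, where $K\subset B\setminus N(\gamma)\subset D(B,\gamma)$ is the same union of paths, now viewed inside the dug manifold; since the digging is performed away from $K$, the endpoints of the $K_i$ on $\partial B=\partial[-1,1]^3$ are unchanged, and under the boundary reparametrization $m:\partial\cub\fl{\cong}\partial M$ they land exactly at the prescribed points $m(0,\pm1,h_i(k))$, so $(D(B,\gamma),K,m)$ is a cobordism with paths. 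The boundary-link condition on $\bK$ is a condition in $B\setminus\gamma$, which is canonically identified with an open subset of $D(B,\gamma)$, so it transports without change; the bounding surfaces for the $\bK_i$ in $B\setminus\gamma$ become bounding surfaces in $D(B,\gamma)\setminus K$. Conversely, the inverse of $D:\cob\to\tcob$ refills the handles/tunnels with $2$--handles away from the paths, producing a bottom-top tangle with paths, and one checks the two constructions are mutually inverse exactly as in \cite{CHM} because the path datum is simply carried along untouched in both directions. Functoriality (compatibility with composition) and the tensor-product statement likewise reduce to the path-free case of \cite{CHM}: composition in both $\tbtt$ and $\tcob$ is built from the composition in $\btt$, $\cob$ plus the surgery along $\gamma^+\cup T_g\cup\upsilon^-$ resp.\ the gluing along $\cub$, all performed in regions disjoint from the paths, which are simply concatenated; and the tensor product is horizontal juxtaposition in the $x$--direction in both categories. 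Thus $D$ restricts to the known isomorphism $\btt\to\cob$ and preserves $\otimes$ on $\btt\otimes\tbtt$.

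I would organize the write-up as: (1) recall the digging construction and the isomorphism $D:\btt\to\cob$ from \cite{CHM}; (2) extend it to $\tbtt\to\tcob$ on objects (same non-negative integers) and morphisms by the recipe above, checking well-definedness on equivalence classes --- an equivalence of triples $(B,K,\gamma)\sim(B',K',\gamma')$ is a homeomorphism respecting the boundary parametrization and identifying $\gamma$ and $K$, hence induces a homeomorphism of the dug manifolds respecting parametrization and identifying $K$, i.e.\ an equivalence of cobordisms with paths, and symmetrically for the inverse; (3) verify functoriality; (4) verify that $D$ sends the no-path subcategory $\btt$ to the no-path subcategory $\cob$ and respects $\otimes$. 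The only mild subtlety --- and the one place I expect to spend a sentence of care rather than invoke \cite{CHM} directly --- is checking that the tubular neighborhood $N(\gamma)$ used for digging can be chosen disjoint from $K$ and from the bounding surfaces of $\bK$, and that this choice is irrelevant up to the equivalence relation; but since $\gamma$ and $K$ are disjoint submanifolds of $B$ this is routine general position, so there is no genuine obstacle here. The proposition is really a bookkeeping extension of \cite[Theorem 2.10]{CHM}.
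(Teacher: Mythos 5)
Your proposal is correct and matches the paper's treatment: the paper gives no independent argument, stating only that the result is a direct adaptation of \cite[Theorem 2.10]{CHM} with $D$ defined by digging tunnels around the tangle components, exactly the route you take. Your additional care about carrying the paths and the boundary-link condition through the digging is sound bookkeeping consistent with what the paper leaves implicit.
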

Let $(B,K,\gamma)$ be a bottom-top tangle with paths in a $\Q$--cube. Let $\bar{\gamma}$ be the link obtained by closing the components of $\gamma$ 
with the line segments $[(p_i,\pm1),(q_i,\pm1)]$. Define the linking matrix $\Lk(\gamma)$ of $\gamma$ in $B$ with the linkings of the components 
of $\bar{\gamma}$. The characte\-ri\-za\-tion of the bottom-top tangles sent onto Lagrangian cobordisms by $D$ 
given in \cite[Lemma 2.12]{CHM} directly generalizes to: 
\begin{lemma}
 Given a bottom-top tangle with paths $(B,K,\gamma)$, the cobordism with paths $D(B,K,\gamma)$ is Lagrangian if and only if $B$ is a $\Q$--cube 
 and $\Lk(\gamma^+)$ is trivial. 
\end{lemma}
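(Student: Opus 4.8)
The plan is to reduce the claim to the known characterization \cite[Lemma 2.12]{CHM} for bottom-top tangles \emph{without paths}, by showing that the presence of the paths $K$ (and the boundary link $\bK$) does not affect any of the homological quantities that enter that characterization. First I would recall the structure of the isomorphism $D:\tbtt\to\tcob$ from Proposition~\ref{propmapD}: given $(B,K,\gamma)$, one digs tunnels along the components $\gamma^\pm$ to turn $B$ into a cobordism $M$ with $g^+$ handles on the top and $g^-$ tunnels on the bottom, while the paths $K$ and the boundary link $\bK$ are simply carried along unchanged, since $K$ is disjoint from $\gamma$. In particular, if $D(B,\gamma)$ denotes the underlying path-free cobordism obtained by forgetting $K$, then $D(B,K,\gamma)$ and $D(B,\gamma)$ have the same underlying $3$--manifold $M$, the same boundary parametrization, and hence the same homology groups $H_1(M;\Q)$, the same images $(m_\pm)_*(A_{g^\pm})$ and $(m_\pm)_*(B_{g^\pm})$. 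The Lagrangian conditions in the definition of $\tlcob$ (namely $H_1(M;\Q)=(m_-)_*(A_{g^-})\oplus(m_+)_*(B_{g^+})$ and $(m_+)_*(A_{g^+})\subset(m_-)_*(A_{g^-})$) refer only to these data, so $D(B,K,\gamma)$ is Lagrangian if and only if $D(B,\gamma)$ is.

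Next I would invoke \cite[Lemma 2.12]{CHM} verbatim: $D(B,\gamma)$ is Lagrangian if and only if $B$ is a $\Q$--cube and $\Lk(\gamma^+)$ is trivial. Combining this with the previous paragraph gives the forward and backward implications of the lemma, once one checks that the matrix $\Lk(\gamma^+)$ appearing here coincides with the one in the path-free setting; but this is immediate, since $\Lk(\gamma)$ is defined purely from the linking numbers in $B$ of the closed-up components $\bar\gamma$, which do not involve $K$. So the whole argument is: forget the paths, apply \cite[Lemma 2.12]{CHM}, and observe that forgetting the paths changes neither the manifold $M$, nor its homology, nor $\Lk(\gamma^+)$, nor the property of $B$ being a $\Q$--cube.

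The one point that genuinely needs a remark — and the place where I expect the only real subtlety to sit — is making sure that "$D(B,K,\gamma)$ Lagrangian" is a condition on the \emph{cobordism part only}, i.e. that the definition of Lagrangian cobordism with paths in Subsection~\ref{subseccob} imposes no extra homological constraint tying the paths to the Lagrangian data. Reading that definition, the Lagrangian conditions are stated exactly as for path-free cobordisms (the refined null-surgery decomposition of $H_1(E;\Q)$ comes later and is not part of the definition of being Lagrangian). Hence there is nothing further to check: the proof is the observation that \cite[Lemma 2.12]{CHM} applies after forgetting $K$, together with the triviality of the effect of that forgetting on all relevant invariants. I would state this as a one-paragraph proof, citing \cite[Lemma 2.12]{CHM} and the construction of $D$ in Proposition~\ref{propmapD}.
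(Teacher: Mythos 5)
Your proposal is correct and is exactly the argument the paper intends: the paper gives no proof, merely asserting that \cite[Lemma 2.12]{CHM} ``directly generalizes,'' and your observation---that the Lagrangian condition in the definition of $\tlcob$ depends only on the underlying cobordism $(M,m)$ and that forgetting $K$ changes neither $M$, nor the boundary parametrization, nor $\Lk(\gamma^+)$, nor whether $B$ is a $\Q$--cube---is precisely the justification for that assertion. Nothing further is needed.
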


    \subsection{Tangles with paths and tangles with disks} \label{subsectangles}

Given a cobordism $(B,b)$ from $F_0$ to $F_0$, a {\em tangle $\gamma$ in $B$} is an isotopy (rel. $\partial B$) class of framed oriented tangles whose boundary 
points lie on the top and bottom surfaces and are uniformly distributed along the line segments $[-1,1]\times\{0\}\times\{1\}$ and $[-1,1]\times\{0\}\times\{-1\}$ 
in $\partial B=b(\partial[-1,1]^3)$. 
Associate with each boundary point of $\gamma$ the sign $+$ if $\gamma$ is oriented downwards at that point and the sign $-$ otherwise. This provides two words 
in the letters $+$ and $-$, one for the top surface and the other for the bottom surface. Lifting these two words into non-associative words $w_t(\gamma)$ 
and $w_b(\gamma)$ in the letters $(+,-)$, one gets a {\em $q$--tangle}. A $q$--tangle $\gamma$ in a cobordism with paths $(B,K,b)$ defines 
a {\em $q$--tangle with paths} $(B,K,\gamma)$ if $\bK$ is a boundary link in $B\setminus\gamma$. 

Define two categories $\tcub$ and $\ttcub$ with objects the non-associative words in the letters $(+,-)$ and morphisms the $q$--tangles in $\Q$--cubes for $\tcub$ 
and the $q$--tangles with paths in $\Q$--cubes for $\ttcub$, up to orientation-preserving homeomorphism respecting the boundary parametrization. 
Composition is given by vertical juxtaposition. Given a morphism $(C,\upsilon)$ in $\tcub$ and a morphism $(B,K,\gamma)$ in $\ttcub$, 
define the tensor product $(C,\upsilon)\otimes(B,K,\gamma)$ by horizontal juxtaposition in the $x$ direction. 

\begin{lemma} \label{lemmapreschir}
 Let $(B,K,\gamma)$ be a $q$--tangle with paths in a $\Q$--cube. There exist a $q$--tangle with paths $([-1,1]^3,\Xi,\eta)$ and a framed link 
 $L\subset[-1,1]^3\setminus(\Xi\cup\eta)$, with $\Xi$ a union of line segments and $L$ null-homotopic in $[-1,1]^3\setminus\Xi$, 
 such that $(B,K,\gamma)$ is obtained from $([-1,1]^3,\Xi,\eta)$ by surgery on $L$. Moreover, two such surgery links 
 are related by the follo\-wing Kirby moves: the blow-up/blow-down move KI which adds or removes a split trivial component with framing $\pm1$ unknotted with $\eta$, 
 and the handleslide move KII which adds a surgery component to another surgery component or to a component of the tangle $\eta$ (see Figure~\ref{figK2}).
 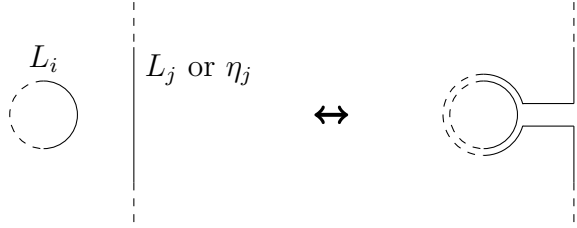
\begin{figure}[htb] 
 \begin{center}
 \begin{tikzpicture} [scale=0.3]
  \draw (0,0) -- (0,-6);
  \draw[dashed] (0,2) -- (0,0) (0,-6) -- (0,-8);
  \draw (-4,-4.5) arc (-90:90:1.5);
  \draw[dashed] (-4,-1.5) arc (90:270:1.5);
  \draw (-4,-0.5) node{$L_i$};
  \draw (0,-1) node[right] {$L_j$ or $\eta_j$};
  \draw[<->,line width=1.5pt] (8,-3) -- (9.5,-3);
 \begin{scope} [xshift=19.5cm]
  \draw (0,0) -- (0,-2.5) (0,-3.5) -- (0,-6);
  \draw[dashed] (0,2) -- (0,0) (0,-6) -- (0,-8);
  \draw (-4,-4.5) arc (-90:90:1.5);
  \draw[dashed] (-4,-1.5) arc (90:270:1.5);
  \draw (-4,-4.8) arc (-90:90:1.8);
  \draw[dashed] (-4,-1.2) arc (90:270:1.8);
  \draw[color=white,line width=3pt] (-2.26,-2.53) -- (-2.26,-3.48);
  \draw (-2.26,-2.5) -- (0,-2.5) (-2.26,-3.5) -- (0,-3.5);
 \end{scope}
 \end{tikzpicture} \caption{KII move.} \label{figK2}
 \end{center}
 \end{figure}
\end{lemma}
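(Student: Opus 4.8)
The plan is to realize the $\Q$--cube $B$ as surgery on a framed link inside the standard cube, following the classical Lickorish--Wallace presentation of closed $3$--manifolds by surgery, but carried out relatively to the tangle $\gamma$ and the paths $K$. First I would choose a Heegaard-type handle decomposition of $B$ rel.\ $\partial B$: since $B$ is a $\Q$--cube with the standard boundary parametrization $b$, it is obtained from $[-1,1]^3$ by attaching the same number of $1$--handles and $2$--handles, and after sliding handles one may assume all $1$--handles are standard, i.e.\ cancelled by a dual picture, so that $B$ is presented as surgery on a framed link $L_0\subset[-1,1]^3$. The tangle $\gamma$ and the boundary link $\bK$ live in $B$, hence correspond to a tangle and a link in the complement of $L_0$ in $[-1,1]^3$; perturbing, $\Xi$ may be isotoped to a union of line segments with the correct endpoints (the endpoints of $\gamma$ and $K$ are already the standard uniformly distributed points on the top and bottom faces, and are unaffected by the surgery), giving the $q$--tangle with paths $([-1,1]^3,\Xi,\eta)$ with $L:=L_0$. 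The condition that $\bK$ be a boundary link, i.e.\ that $\Xi$ together with the closing segments bound disjoint surfaces, is preserved because the surgery does not change the ambient homology type of these curves — here I use that $B$ is a $\Q$--cube so that "boundary link" is a homological (indeed $\Q$--homological) condition that transports through the surgery.

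Next I would arrange that $L$ is null-homotopic in $[-1,1]^3\setminus\Xi$. This is the one genuinely new ingredient compared with \cite[Lemma~2.12]{CHM}, and I expect it to be the main obstacle. The point is that a priori the surgery components $L_i$ may link or be knotted with the segments $\Xi$. To fix this, I would use handleslides (KII moves) of the $L_i$ over the strands $\eta_j$ and over one another: the homology class of $L_i$ in $H_1([-1,1]^3\setminus\Xi;\Z)$ is a vector of linking numbers with the segments, and since the $\eta_j$ are the cores of small $1$--handle-like arcs one can slide $L_i$ along a dual arc to kill these linking numbers one at a time, reducing $[L_i]$ to $0$ in $H_1$. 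Once $[L_i]=0$ in $H_1$ of the complement of $\Xi$, null-homotopy follows from the fact that the complement of a trivial union of segments in the cube has free fundamental group with the meridians of the $\Xi_j$ as a basis, and an element is null-homotopic iff it is trivial in homology — no, that last implication is false in general, so instead I would argue more carefully: after the homology-killing slides one can further slide $L_i$ past the segments to "unhook" it completely, using that the segments are unknotted and unlinked, so that $L_i$ may be isotoped into a standard ball disjoint from $\Xi$, where it is manifestly null-homotopic; alternatively one absorbs the remaining loops around $\Xi_j$ into KI moves. I would present this as the careful inductive unhooking argument, strand by strand.

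Finally I would prove the uniqueness statement. Two surgery presentations $([-1,1]^3,\Xi,\eta;L)$ and $([-1,1]^3,\Xi',\eta';L')$ of the same $(B,K,\gamma)$ give, after filling in the paths/tangle, two surgery presentations of a closed pair; the relative version of the Kirby calculus theorem (Rolfsen--Fenn--Rourke, in the form adapted to links in a fixed knotted background, as used in \cite{CHM}) says these are related by isotopy, KI, and KII moves, where KII moves are allowed both among the $L_i$ and between an $L_i$ and a component of the fixed tangle $\eta$. I would note that the moves can be taken to preserve the three required properties ($\Xi$ a union of segments, $L$ null-homotopic in the complement of $\Xi$, $\bK$ a boundary link): KI obviously does, and KII among surgery components preserves null-homotopy since $[L_i+L_j]=[L_i]+[L_j]=0$, while a KII over an $\eta_j$ is exactly the move pictured in Figure~\ref{figK2} and changes $L_i$ within its homotopy class only by a conjugate of $[\eta_j]$, which one checks leaves null-homotopy intact after the accompanying correction. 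The boundary-link condition on $\bK$ is untouched by all these moves since they never involve the closing segments. This reduces the uniqueness to citing the relative Kirby theorem, exactly parallel to the no-path case in \cite[Lemma~2.12]{CHM}, so only the existence part genuinely needs the new unhooking argument.
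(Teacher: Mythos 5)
Your overall shape (surgery presentation plus a relative Kirby theorem) is right, but two steps fail as written, and both are exactly where the paper's proof does something you are missing. First, the existence part: you cannot ``perturb'' $\Xi$ into a union of line segments. The link $\bK$ is only assumed to be a \emph{boundary} link in $B$, and after undoing the surgery its image in $[-1,1]^3$ may still be knotted and linked; fixed endpoints do not help. The paper's fix is to take a Seifert surface $\Sigma$ for $\bK$ disjoint from $\gamma$, push the surgery link $J$ (the one undoing $B$) off $\Sigma$, and then add \emph{further} surgery components that change crossings of the handles of $\Sigma$ (Figure~\ref{figundocrossing}) so as to trivialize $\bK$. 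Your proposal has no mechanism for trivializing $\Xi$ at all. Second, your argument for null-homotopy of $L$ does not close: you correctly note that killing the class in $H_1$ is not enough, but the fallback --- that a curve unlinked from a trivial union of segments can be isotoped into a ball disjoint from them --- is also false (a commutator of two meridians is a counterexample). The paper gets null-homotopy for free: the whole surgery link is constructed disjoint from $\Sigma$, and after the crossing-change surgeries $\Sigma$ becomes a union of disks bounded by $\bXi$; the complement of those disks in the cube is simply connected, so the inverse surgery link $L$, being disjoint from them, is automatically null-homotopic in $[-1,1]^3\setminus\Xi$. This ``stay disjoint from the Seifert surface'' idea is the missing ingredient.

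For the uniqueness part there is a subtler gap. A surgery presentation here is required to have $L$ null-homotopic in $[-1,1]^3\setminus\Xi$, so a generic relative Fenn--Rourke/Kirby theorem is not sufficient: you need to know that two such presentations are connected by a chain of KI/KII moves \emph{through null-homotopic links}, and that the allowed moves are exactly the restricted ones in the statement. That is precisely the content of the Habiro--Widmer refined Kirby theorem for null-homotopic framed links (\cite[Theorem 3.1]{HW}), applied in $[-1,1]^3\setminus\Xi$, which is what the paper invokes; your sketch of checking move-by-move that null-homotopy is preserved does not by itself show that \emph{some} such chain exists. (The paper also notes the small point that a split $\pm1$-framed unknot can be unknotted from $\eta$ by KII moves, which is why KI can be stated in the clean form of the lemma.)
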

\begin{proof}
 Let $\Sigma$ be a Seifert surface of $\bK$ which is the disjoint union of Seifert surfaces of the $\bK_i$. Choose $\Sigma$ disjoint from $\gamma$. 
 Take a link $J\subset B$ such that surgery on $J$ gives $[-1,1]^3$. Performing isotopies on $J$ if necessary, we can assume that $J$ does not meet $\Sigma$. 
 The handles of (the image in $[-1,1]^3$ of) $\Sigma$ can be unlinked 
 by adding surgery components as shown in Figure \ref{figundocrossing}. 
 \begin{figure} [htb]
 \begin{center}
 \begin{tikzpicture} [scale=0.3]
 \newcommand{\drawwl}[1]{\draw[white,line width=6pt] #1 \draw[line width=2pt] #1}
 \begin{scope}
  \draw (0.5,2) .. controls +(0,1) and +(0,1) .. (3.5,2);
  \drawwl {(0,4) -- (4,0);}
  \drawwl {(0,0) -- (4,4);}
  \draww {(0.5,2) .. controls +(0,-1) and +(0,-1) .. (3.5,2);}
  \draw (-0.3,2) node {$\scriptstyle{+1}$};
 \end{scope}
  \draw (8,2) node {$\sim$};
 \begin{scope} [xshift=12cm]
  \draw[line width=2pt] (0,0) -- (4,4);
  \drawwl {(0,4) -- (4,0);}
 \end{scope}
 \end{tikzpicture}
 \end{center} \caption{Surgery changing a crossing.} \label{figundocrossing}
 \end{figure}
 In this way, $\bK$ can be turned into a trivial link. This provides a surgery link in $B\setminus(K\cup\gamma)$, disjoint 
 from $\Sigma$, such that surgery on this link changes $(B,K,\gamma)$ to a $q$--tangle with paths $([-1,1]^3,\Xi,\eta)$ 
 as required. Let $L$ be an inverse surgery link. It is null-homotopic in $[-1,1]^3\setminus\Xi$ since it is disjoint from $\Sigma$. 
 
 For the last assertion, apply \cite[Theorem 3.1]{HW} in $[-1,1]^3\setminus\Xi$. Note that a split trivial component with framing $\pm1$ can always be unknotted 
 from $\eta$ using the KII move.
\end{proof}
A family $(([-1,1]^3,\Xi,\eta),L)$ satisfying the conditions of the lemma with $L$ oriented is a {\em surgery presentation} of $(B,K,\gamma)$. 
When $\gamma$ (and thus $\eta$) is a bottom-top tangle, the components of $\eta$ can be closed by line segments in the top and bottom surfaces. The obtained 
curves are null-homotopic in $[-1,1]^3\setminus\Xi$ since $\bXi$ is a boundary link in $[-1,1]^3\setminus\eta$.

The notion of a $q$--tangle in $[-1,1]^3$ with trivial paths, {\em i.e.} line segments, is equivalent to the following one. 
A {\em $q$--tangle with disks} is an equivalence class of pairs $(\gamma,k)$, 
where $\gamma$ is a $q$--tangle in $[-1,1]^3$, $k$ is a non-negative integer understood as the datum of $k$ disks $d_i=[0,1]\times[-1,1]\times\{h_i(k)\}$ and the link $\sqcup_{i=1}^k\partial d_i$ associated with the paths $d_i^\partial=\{0\}\times[-1,1]\times\{h_i(k)\}$ is a boundary link in $[-1,1]^3\setminus\gamma$. An example of such a tangle with disks is drawn in Figure \ref{figtang}, projected in the $y$ direction. 
Equivalence of such pairs is defined as isotopy relative to $(\partial[-1,1]^3)\cup(\cup_{i=1}^k d_i^\partial)$. 
Define two categories $\tang$ and $\ttang$ with objects the non-associative words in the letters $(+,-)$ and morphisms the $q$--tangles for $\tang$ and the 
$q$--tangles with disks for $\ttang$. Composition is given by vertical juxtaposition. Given a $q$--tangle $\gamma$ and a $q$--tangle with disks $(\upsilon,k)$, 
define the tensor product $\gamma\otimes(\upsilon,k)$ in $\ttang((w_t(\gamma))(w_t(\upsilon)),(w_b(\gamma))(w_b(\upsilon)))$ by horizontal juxtaposition in the $x$ direction. 
Define similarly two categories $\mathcal{T}$ and $\widetilde{\mathcal{T}}$ of tangles and tangles with disks in $[-1,1]^3$ with objects the associative words 
in the letters $(+,-)$. 
\begin{figure}[htb] 
\begin{center}
\begin{tikzpicture} [scale=0.4]
 \draw (6,8) .. controls +(0,-2) and +(1,0) .. (5.2,4.7);
 \draww{(3.5,2) .. controls +(0,1) and +(0,-2) .. (6,4) .. controls +(0,0.7) and +(1,0) .. (5.2,5.3);}
 \draw (5.2,5.3) .. controls +(-1,0) and +(1,0) .. (4.7,3);
 \draww{(5.2,4.7) .. controls +(-0.8,0) and +(0,-1) .. (4.5,6) .. controls +(0,1) and +(0,-1) .. (4,8);}
 \draww{(4,0) .. controls +(0,1) and +(0,-1) .. (5.5,2) .. controls +(0,0.5) and +(0.8,0) .. (4.7,3.5);}
 \draww{((4.7,3.5) .. controls +(-0.8,0) and +(0,0.5) .. (4.5,2) .. controls +(0,-1) and +(0,1) .. (2,0);}
 \draww{(4.7,3) .. controls +(-1,0) and +(0,-3) .. (2,8);}
 \draww{(6,0) .. controls +(0,2) and +(0,-1) .. (3.5,2);}
 \draw (0,0) -- (0,8) -- (8,8) -- (8,0) -- (0,0);
 \foreach \y in {2,4,6} {\draw (4,\y) node {$\scriptstyle{\bullet}$} -- (8,\y);}
 \draw[->] (5.8,0.8) -- (5.7,0.9); \draw[->] (4.5,0.83) -- (4.6,0.9); \draw[->] (6.01,6.9) -- (6.01,7);
\end{tikzpicture}
\end{center} \caption{Diagram of a tangle with disks.} \label{figtang}
\end{figure}
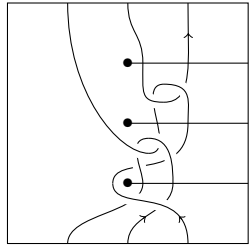

      \section{Target categories: Jacobi diagrams with beads} \label{sectargetcat}

     \subsection{Diagram spaces} \label{subsecdiagramspaces}

For a compact oriented 1--manifold $X$ and a finite set $C$, a {\em Jacobi diagram on $(X,C)$} is a unitrivalent graph whose trivalent vertices are oriented 
and whose univalent vertices are embedded in $X$ or labelled by $C$, where an orientation of a trivalent vertex is a cyclic order of the three edges that 
meet at this vertex ---fixed as \raisebox{-1.5ex}{
\begin{tikzpicture} [scale=0.2]
\newcommand{\tiers}[1]{
\draw[rotate=#1,color=white,line width=4pt] (0,0) -- (0,-2);
\draw[rotate=#1] (0,0) -- (0,-2);}
\draw (0,0) circle (1);
\draw[<-] (-0.05,1) -- (0.05,1);
\tiers{0}
\tiers{120}
\tiers{-120}
\end{tikzpicture}}
in the pictures. The manifold $X$ is called the {\em skeleton} of the diagram. Next, let $R$ be the ring $\Qtt$ or $\Qt$. 
An {\em $R$--beaded Jacobi diagram on $(X,C)$} is a Jacobi diagram on $(X,C)$ whose graph edges are oriented and labelled by $R$. 
Last, an {\em $R$--winding Jacobi diagram on $(X,C)$} is an $R$--beaded Jacobi diagram on $(X,C)$ whose skeleton is viewed as a union of edges ---defined 
by the embedded vertices--- that are labelled by powers of $t$, with the condition that the product of the labels on each component of $X$ is~1. 
As defined in the introduction, the i--degree of a trivalent diagram is its number of trivalent vertices. 
Set:
$$\A(X,*_C)=\frac{\Q\langle \textrm{Jacobi diagrams on }(X,C) \rangle}{\Q\langle \textrm{AS, IHX, STU} \rangle},$$
$$\tA_R(X,*_C)=\frac{\Q\langle R\textrm{--beaded Jacobi diagrams on }(X,C) \rangle}{\Q\langle \textrm{AS, IHX, STU, LE, OR, Hol} \rangle},$$
$$\tAw_R(X,*_C)=\frac{\Q\langle R\textrm{--winding Jacobi diagrams on }(X,C) \rangle}{\Q\langle \textrm{AS, IHX, STU, LE, OR, Hol, \holw} \rangle},$$
with the relations in Figures \ref{figrelations1}, \ref{figrelations2} and \ref{figrelations3}, where the IHX relation for beaded and winding 
diagrams is defined with the central edge labelled by 1. 
\begin{figure}[htb] 
\begin{center}
\begin{tikzpicture} [scale=0.3]
\begin{scope} 
 \draw (0,0) -- (0,4);
 \draw[dashed,->] (0,2) -- (0.8,2) node[above] {1};
 \draw[dashed] (0.7,2) -- (1.3,2) -- (2.6,3) (1.3,2) -- (2.6,1);
 \draw (3.8,2) node{$=$};
 \draw (5,0) -- (5,4);
 \draw[dashed] (5,2.6) -- (7.6,2.6) (5,1.4) -- (7.6,1.4);
 \draw (8.8,2) node {$-$};
 \draw (10,0) -- (10,4);
 \draw[dashed] (10,2.6) -- (12.6,1.4) (10,1.4) -- (12.6,2.6);
 \draw (6.3,-1.5) node{STU};
\end{scope}
\begin{scope} [xshift=20cm]
 \draw[->] (0,0) -- (0,4);
 \draw (0,3) node {$\scriptscriptstyle{\bullet}$} node[left] {$t^i$};
 \draw (0,1) node {$\scriptscriptstyle{\bullet}$} node[left] {$t^j$};
 \draw[dashed] (0,2) -- (2.6,2);
 \draw[dashed,->] (0,2) -- (1.5,2) node[above] {$P$};
 \draw (4,2) node {$=$};
 \draw[->] (8,0) -- (8,4);
 \draw (8,3) node {$\scriptscriptstyle{\bullet}$} node[left] {$t^{i+1}$};
 \draw (8,1) node {$\scriptscriptstyle{\bullet}$} node[left] {$t^{j-1}$};
 \draw[dashed] (8,2) -- (10.6,2);
 \draw[dashed,->] (8,2) -- (9.5,2) node[above] {$tP$};
 \draw (5,-1.5) node {\holw};
\end{scope}
\end{tikzpicture}
\end{center} \caption{Relations STU and \holw\ on Jacobi diagrams.} \label{figrelations3}
\end{figure}
In the STU relation, the edges corresponding to each other have the same orientation and label.
In the pictures, the skeleton is represented with full lines and the graph with dashed lines. We indeed consider the i--degree completion of these vector spaces,
keeping the same notation.

\begin{remark}
 For diagrams in $\tAw_R(X,*_C)$, the condition on the labels on the skeleton implies that all labels can be pushed off each component of the skeleton using the \holw\ relation. When the component is an interval, there is a unique way to do so. Hence, when $X$ contains only intervals, $\tAw_R(X,*_C)$ is isomorphic to $\tA_R(X,*_C)$.
\end{remark}

For a finite set $S$, denote by $\xd_S$ (resp. $\xc_S$) the manifold made of $|S|$ intervals (resp. circles) indexed by the elements of $S$. 
In the following, $\bA$ stands for $\A$, $\tA_R$ or $\tAw_R$. In \cite[Theorem 8]{BN}, Bar-Natan defines a formal PBW isomorphism:
$$\chi_S:\bA(X,*_{C\cup S})\iso\bA(X\cup\xd_S,*_C).$$
For a Jacobi diagram $D$, the image $\chi_S(D)$ is the average of all possible ways to attach the $s$--colored vertices of $D$ on the corresponding $s$--indexed 
interval in $\xd_S$ for each $s\in S$. The setting of \cite{BN} is not exactly the same, but the argument adapts directly. 
When $|S|=1$, closing the $S$--labelled component gives an isomorphism from $\bA(X\cup\xd_S,*_C)$ to $\bA(X\cup\xc_S,*_C)$ \cite[Lemma 3.1]{BN}. However, this isomorphism does not hold for $|S|>1$. To recover an isomorphism onto $\bA(X\cup\xc_S,*_C)$, some ``link relations'' were introduced in \cite[Section 5.2]{AA2}. 
We recall these relations and introduce additional ``winding relations''.

Given a (beaded, winding) Jacobi diagram $D$ on $(X,C\cup S)$ and a univalent vertex $*$ of $D$ labelled by $s\in S$, define the associated {\em link relation} 
as the vanishing of the sum of all diagrams obtained from $D$ by gluing the vertex $*$ on the edges adjacent to a univalent $s$--labelled vertex, as follows: 
\raisebox{-1.5ex}{
\begin{tikzpicture} [scale=0.4]
 \draw[dashed] (0,1) -- (1,1) (1,0) -- (1,2);
 \draw (1,1) node {$\scriptscriptstyle{\bullet}$};
 \draw (1,2) node {$\scriptscriptstyle{\bullet}$};
 \draw (1,1) node[right] {$*$} (1,2) node[right] {$s$};
\end{tikzpicture}},
see Figure \ref{figlinkrel} (we omit the orientation of the edges when it is not relevant thanks to the OR relation). 
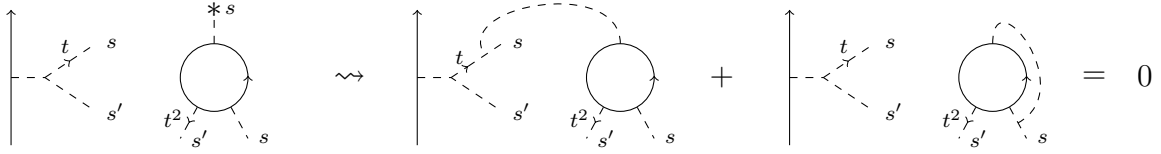
\begin{figure}[htb] 
\begin{center}
\begin{tikzpicture} [scale=0.45]
\begin{scope}
 \draw[->] (0,0) -- (0,4);
 \draw[dashed] (0,2) -- (1,2) -- (2.5,1) (1.75,2.5) -- (2.5,3);
 \draw[->,dashed] (1,2) -- (1.75,2.5);
 \draw (1.6,2.4) node[above] {$\scriptstyle{t}$};
 \draw (2.5,3) node[right] {$\scriptstyle{s}$} (2.5,1) node[right] {$\scriptstyle{s'}$};
 \draw (6,2) circle (1);
 \draw[->] (7,2) -- (7,2.05);
 \draw[dashed] (6,3) -- (6,4) (5.25,0.66) -- (5,0.2) (6.5,1.12) -- (7,0.2);
 \draw (6,4) node {$*$};
 \draw (6,4) node[right] {$\scriptstyle{s}$} (5,0.2) node[right] {$\scriptstyle{s'}$} (7,0.2) node[right] {$\scriptstyle{s}$};
 \draw[->,dashed] (5.5,1.12) -- (5.25,0.66);
 \draw (5.4,0.8) node[left] {$\scriptstyle{t^2}$};
\end{scope}
 \draw (10,2) node {$\rightsquigarrow$};
\begin{scope} [xshift=12cm]
 \draw[->] (0,0) -- (0,4);
 \draw[dashed] (0,2) -- (1,2) -- (2.5,1) (1.5,2.33) -- (2.5,3);
 \draw[->,dashed] (1,2) -- (1.5,2.33);
 \draw (1.25,2.6) node {$\scriptstyle{t}$};
 \draw (2.5,3) node[right] {$\scriptstyle{s}$} (2.5,1) node[right] {$\scriptstyle{s'}$};
 \draw (6,2) circle (1);
 \draw[->] (7,2) -- (7,2.05);
 \draw[dashed] (5.25,0.66) -- (5,0.2) (6.5,1.12) -- (7,0.2);
 \draw[dashed] (6,3) .. controls +(0,2) and +(-1,1.5) .. (2,2.66);
 \draw (5,0.2) node[right] {$\scriptstyle{s'}$} (7,0.2) node[right] {$\scriptstyle{s}$};
 \draw[->,dashed] (5.5,1.12) -- (5.25,0.66);
 \draw (5.4,0.8) node[left] {$\scriptstyle{t^2}$};
\end{scope}
 \draw (21,2) node {$+$};
\begin{scope} [xshift=23cm]
 \draw[->] (0,0) -- (0,4);
 \draw[dashed] (0,2) -- (1,2) -- (2.5,1) (1.75,2.5) -- (2.5,3);
 \draw[->,dashed] (1,2) -- (1.75,2.5);
 \draw (1.6,2.4) node[above] {$\scriptstyle{t}$};
 \draw (2.5,3) node[right] {$\scriptstyle{s}$} (2.5,1) node[right] {$\scriptstyle{s'}$};
 \draw (6,2) circle (1);
 \draw[->] (7,2) -- (7,2.05);
 \draw[dashed] (5.25,0.66) -- (5,0.2) (6.5,1.12) -- (7,0.2);
 \draw[dashed] (6,3) .. controls +(0,2) and +(1.84,1) .. (6.75,0.66);
 \draw (5,0.2) node[right] {$\scriptstyle{s'}$} (7,0.2) node[right] {$\scriptstyle{s}$};
 \draw[->,dashed] (5.5,1.12) -- (5.25,0.66);
 \draw (5.4,0.8) node[left] {$\scriptstyle{t^2}$};
\end{scope}
 \draw (32,2) node {$=$} (33.5,2.06) node {$0$};
\end{tikzpicture}
\end{center} \caption{A link relation.} \label{figlinkrel}
\end{figure}
Given a winding Jacobi diagram $D$ on $(X,C\cup S)$, a label $s\in S$ and an integer $k$, the associated {\em winding relation} identifies $D$ 
with the diagram obtained from $D$ by {\em pushing $t^k$ at each $s$--labelled vertex}, {\em i.e.} by multiplying the label of each edge adjacent 
to a univalent $s$--labelled vertex by $t^k$ if the orientation of the edge goes backward the vertex and by $t^{-k}$ otherwise, see Figure~\ref{figwindingrel}. 
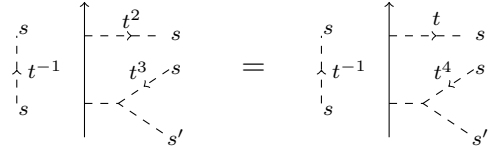
\begin{figure}[htb] 
\begin{center}
\begin{tikzpicture} [scale=0.45]
\begin{scope}
 \draw[dashed,->] (-2,1) -- (-2,2) node[right] {$\scriptstyle{t^{-1}}$};
 \draw[dashed] (-2,2) -- (-2,3);
 \draw (-1.8,0.8) node {$\scriptstyle{s}$} (-1.8,3.2) node {$\scriptstyle{s}$};
 \draw[->] (0,0) -- (0,4);
 \draw[dashed] (0,1) -- (1,1) -- (2.5,0) (1,1) -- (1.75,1.5);
 \draw[->,dashed] (2.5,2) -- (1.75,1.5);
 \draw (1.6,1.4) node[above] {$\scriptstyle{t^3}$};
 \draw (2.7,2) node {$\scriptstyle{s}$} (2.7,0) node {$\scriptstyle{s'}$};
 \draw[dashed,->] (0,3) -- (1.4,3);
 \draw[dashed] (1.4,3) -- (2.3,3);
 \draw (1.4,3.5) node {$\scriptstyle{t^2}$};
 \draw (2.7,3) node {$\scriptstyle{s}$};
\end{scope}
 \draw (5,2) node {$=$};
\begin{scope} [xshift=9cm]
 \draw[dashed,->] (-2,1) -- (-2,2) node[right] {$\scriptstyle{t^{-1}}$};
 \draw[dashed] (-2,2) -- (-2,3);
 \draw (-1.8,0.8) node {$\scriptstyle{s}$} (-1.8,3.2) node {$\scriptstyle{s}$};
 \draw[->] (0,0) -- (0,4);
 \draw[dashed] (0,1) -- (1,1) -- (2.5,0) (1,1) -- (1.75,1.5);
 \draw[->,dashed] (2.5,2) -- (1.75,1.5);
 \draw (1.6,1.4) node[above] {$\scriptstyle{t^4}$};
 \draw (2.7,2) node {$\scriptstyle{s}$} (2.7,0) node {$\scriptstyle{s'}$};
 \draw[dashed,->] (0,3) -- (1.4,3);
 \draw[dashed] (1.4,3) -- (2.3,3);
 \draw (1.4,3.5) node {$\scriptstyle{t}$};
 \draw (2.7,3) node {$\scriptstyle{s}$};
\end{scope}
\end{tikzpicture}
\end{center} \caption{A winding relation.} \label{figwindingrel}
\end{figure}
Denote $\bA(X,*_C,\xl_S)$ (resp. $\bA(X,*_C,\xlw_S)$) the quotient of $\bA(X,*_{C\cup S})$ by all link relations (resp. all link and winding relations) 
on $S$--labelled vertices. Note that if $X$ contains no closed component, then the spaces $\tAw_R(X,*_C,\xlw_S)$ and $\tA_R(X,*_C,\xlw_S)$ are isomorphic. 
When some of the sets $X$, $C$, $S$ are empty, we simply drop the corresponding notation, mentionning $\varnothing$ only when they are all empty. 

\begin{proposition}
 The isomorphisms $\chi_S:\bA(X,*_{C\cup S})\iso\bA(X\cup\xd_S,*_C)$ descend to isomorphisms: 
 $$\chi_S:\A(X,*_C,\xl_S)\iso\A(X\cup\xc_S,*_C),$$
 $$\chi_S:\tA_R(X,*_C,\xl_S)\iso\tA_R(X\cup\xc_S,*_C),$$
 $$\chi_S:\tAw_R(X,*_C,\xlw_S)\iso\tAw_R(X\cup\xc_S,*_C).$$
\end{proposition}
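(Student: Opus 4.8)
The plan is to use that $\chi_S\colon\bA(X,*_{C\cup S})\iso\bA(X\cup\xd_S,*_C)$ is already an isomorphism \cite[Theorem 8]{BN}, and to compare its target with $\bA(X\cup\xc_S,*_C)$ through the \emph{closing-up} homomorphism $c_S\colon\bA(X\cup\xd_S,*_C)\to\bA(X\cup\xc_S,*_C)$ that identifies the two endpoints of each $s$-indexed interval ($s\in S$), pushing the edge-labels of the interval onto the resulting circle in the winding case. Since $\chi_S$ is bijective, the statement reduces to two facts: (a) $c_S$ is surjective and $c_S\circ\chi_S$ kills all link relations (resp. all link and winding relations); (b) $\ker c_S=\chi_S\bigl(\langle\textrm{link relations}\rangle\bigr)$ (resp. $\chi_S\bigl(\langle\textrm{link and winding relations}\rangle\bigr)$). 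Granting these, $\chi_S$ descends to a map on the quotient which is onto by (a) and injective by (b).

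Fact (a) is the easy half. Surjectivity of $c_S$ is immediate: cut each circle of $\xc_S$ at a point avoiding the univalent vertices to produce a preimage. For the vanishing of $c_S\circ\chi_S$ on a link relation based at an $s$-labelled leg $*$: after $\chi_S$ all $s$-labelled univalent vertices are averaged over the positions along the $s$-circle, and the alternating sum defining the link relation becomes, term by term, the difference obtained by sliding $*$ past one of the remaining $s$-vertices; since a circle has no endpoint these differences telescope, so the sum is $0$ in $\bA(X\cup\xc_S,*_C)$ by STU. In the winding case, pushing $t^k$ at an $s$-labelled leg becomes, after closing, an instance of the \holw\ relation performed along the $s$-circle, so winding relations are killed too.

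Fact (b) is the heart of the matter. One describes $\ker c_S$ by generators: two diagrams on $\xd_S$ have the same image under $c_S$ if and only if they are related by ambient moves together with the cyclic rotation of the word of univalent vertices on each $s$-circle (and, in the winding case, the redistribution of edge-labels along each $s$-circle by \holw). Hence $\ker c_S$ is spanned by the differences $D-D'$, where $D'$ is obtained from $D$ by transporting the extreme $s$-labelled vertex of one interval around to the other endpoint (resp. by a single \holw\ move along an interval). Expanding $D-D'$ as the telescoping sum of the STU-corrections collected while sliding that vertex past each of the remaining $s$-vertices, and applying $\chi_S^{-1}$, one recognises precisely a link relation; the \holw-type differences pull back to winding relations. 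This identifies $\ker c_S$ with the span of the images of the link (and winding) relations.

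The main obstacle is Fact (b), and specifically the bookkeeping that matches the cyclic-rotation ambiguity of $c_S$ with the link relations under the non-local isomorphism $\chi_S$. For $\A$ and $\tA_R$ this is the argument of \cite[Section~5.2]{AA2}, which adapts verbatim to the present skeleta and, through the STU relation, to the beaded spaces. The genuinely new input is the winding layer: one must track the edge-labels simultaneously and check that the only extra ambiguity created by closing an interval into a circle is exactly the winding relation; when $X$ has no closed component the isomorphism $\tAw_R(X,*_C,\xlw_S)\cong\tA_R(X,*_C,\xlw_S)$ recalled above lets one reuse the beaded computation, the remaining point being the behaviour of the labels on the newly created circles $\xc_S$, which is governed by \holw\ and hence by the winding relations.
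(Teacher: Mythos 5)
Your proposal is correct and follows essentially the same route as the paper: both directions are handled by closing the $S$-indexed intervals into circles, killing the link relations via STU and the winding relations via \holw, and then identifying the ambiguity of the closing operation (cyclic rotation of the $s$-labelled vertices, resp.\ redistribution of labels along the circle) with the link and winding relations by pulling the telescoping STU/\holw\ corrections back through $\chi_S^{-1}$ and applying IHX, resp.\ Hol. The paper's proof is exactly this argument, deferring the non-winding cases to \cite[Theorem 3]{AA2} and adding only the winding layer, as you do.
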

\begin{proof}
 In the case $\bA=\A\textrm{ or }\tA_R$, it is \cite[Theorem 3]{AA2}. We recall briefly their argument in order to add the consideration of the winding relations 
 when $\bA=\tAw_R$. 
 
 The fact that the images by $\chi_S$ of the link relations map to 0 in $\bA(X\cup\xc_S,*_C)$ follows from the STU relation. For the winding relations, it follows 
 from the \holw\ relation applied at each univalent vertex glued on the $s$--labelled component, where $s$ is the label involved in the relation. 
 Now, take two diagrams in $\bA(X\cup\xd_S,*_C)$ that are identified when closing an $S$--labelled component. We have to consider the two situations 
 depicted in Figures~\ref{figinverselinkrel} and~\ref{figinversewindingrel}, where the gray zone represents a hidden part of the diagram. 
\begin{figure}[htb] 
\begin{center}
\begin{tikzpicture} [scale=0.4]
\begin{scope}
 \draw[->] (3,0) -- (3,5);
 \foreach \x in {1,2,3,4} {\draw[dashed] (0,\x) -- (3,\x);}
 \draw[gray,fill=gray] (0,2.5) circle (1 and 2.5);
\end{scope}
 \draw (5,2.5) node {$-$};
\begin{scope} [xshift=8cm]
 \draw[->] (3,0) -- (3,5);
 \foreach \x in {1,2,3} {\draw[dashed] (0,\x) -- (3,\x);}
 \draw[dashed] (0.5,4) .. controls +(1.5,0) and +(-1.5,0) .. (3,0.3);
 \draw[gray,fill=gray] (0,2.5) circle (1 and 2.5);
\end{scope}
 \draw (13,2.5) node {$=$};
\begin{scope} [xshift=16cm]
 \draw[->] (3,0) -- (3,5);
 \foreach \x in {1,2,3} {\draw[dashed] (0,\x) -- (3,\x);}
 \draw[dashed] (0.5,4) .. controls +(1,0) and +(0,0.5) .. (2,3);
 \draw[gray,fill=gray] (0,2.5) circle (1 and 2.5);
\end{scope}
 \draw (21,2.5) node {$+$};
\begin{scope} [xshift=24cm]
 \draw[->] (3,0) -- (3,5);
 \foreach \x in {1,2,3} {\draw[dashed] (0,\x) -- (3,\x);}
 \draw[dashed] (0.5,4) .. controls +(1.3,0) and +(0,0.6) .. (2,2);
 \draw[gray,fill=gray] (0,2.5) circle (1 and 2.5);
\end{scope}
 \draw (29,2.5) node {$+$};
\begin{scope} [xshift=32cm]
 \draw[->] (3,0) -- (3,5);
 \foreach \x in {1,2,3} {\draw[dashed] (0,\x) -- (3,\x);}
 \draw[dashed] (0.5,4) .. controls +(1.5,0) and +(0,0.8) .. (2,1);
 \draw[gray,fill=gray] (0,2.5) circle (1 and 2.5);
\end{scope}
\end{tikzpicture}
\end{center} \caption{Recovering the link relation.} \label{figinverselinkrel}
\end{figure}
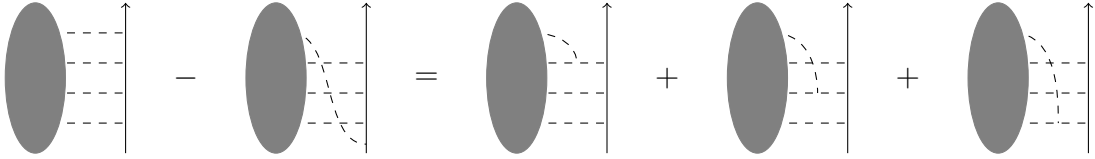
Equalities are obtained by applying STU relations in the first case and \holw\ relations in the second case.
\begin{figure}[htb] 
\begin{center}
\begin{tikzpicture} [scale=0.35]
\begin{scope}
 \draw[->] (4,0) -- (4,8);
 \foreach \x in {2,4,6} {\draw[dashed] (0,\x) -- (4,\x);}
 \foreach \x in {3,5,7} {\draw (4,\x) node {$\scriptscriptstyle{\bullet}$};}
 \draw (4.7,3) node {$\scriptstyle{t^j}$} (4.7,5) node {$\scriptstyle{t^\ell}$} (4.7,7) node {$\scriptstyle{t^m}$};
 \draw[gray,fill=gray] (0,4) circle (1 and 3);
\end{scope}
 \draw (7,4) node {$-$};
\begin{scope} [xshift=10cm]
 \draw[->] (4,0) -- (4,8);
 \foreach \x in {2,4,6} {\draw[dashed] (0,\x) -- (4,\x);}
 \foreach \x in {1,3,5} {\draw (4,\x) node {$\scriptscriptstyle{\bullet}$};}
 \draw (4.7,3) node {$\scriptstyle{t^j}$} (4.7,5) node {$\scriptstyle{t^\ell}$} (4.7,1) node {$\scriptstyle{t^m}$};
 \draw[gray,fill=gray] (0,4) circle (1 and 3);
\end{scope}
 \draw (17,4) node {$=$};
\begin{scope} [xshift=20cm]
 \draw[->] (4,0) -- (4,8);
 \foreach \x in {2,4,6} {\draw[dashed] (0,\x) -- (4,\x);}
 \foreach \x in {4,6} {\draw[->] (2.5,\x) -- (2.45,\x);}
 \draw (2.7,4.6) node {$\scriptstyle{t^j}$} (2.9,6.6) node {$\scriptstyle{t^{j+\ell}}$};
 \draw[gray,fill=gray] (0,4) circle (1 and 3);
\end{scope}
 \draw (27,4) node {$-$};
\begin{scope} [xshift=30cm]
 \draw[->] (4,0) -- (4,8);
 \foreach \x in {2,4,6} {\draw[dashed] (0,\x) -- (4,\x);}
 \foreach \x in {2,4} {\draw[->] (2.5,\x) -- (2.45,\x);}
 \draw (2.7,2.6) node {$\scriptstyle{t^m}$} (2.9,4.6) node {$\scriptstyle{t^{j+m}}$};
 \draw[gray,fill=gray] (0,4) circle (1 and 3);
\end{scope}
\end{tikzpicture}
\end{center} \caption{Recovering the winding relation (with $j+\ell+m=0$).} \label{figinversewindingrel}
\end{figure}
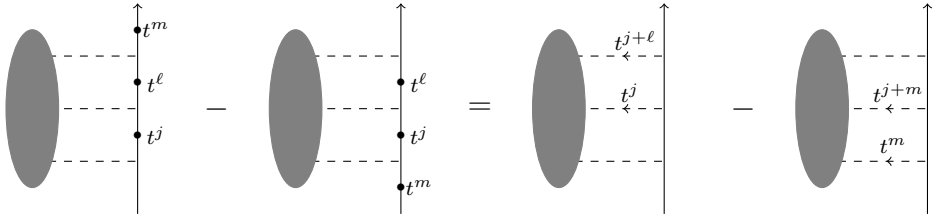
 Application of $\chi^{-1}$ to the right members 
 provides linear combinations of the same sum with the skeleton component dropped and possibly trees glued. Using the IHX relation in the first case 
 (resp. the Hol relation in the second case), we obtain link relations (resp. winding relations).
\end{proof}

    \subsection{Product and coproduct}

We first define a coproduct on the diagram spaces of the previous subsection. Given a (beaded, winding) Jacobi diagram $D$ on $(X,C)$, denote by $\dddot{D}$ its graph part, and by $\dddot{D}_i$, $i\in I$, the connected components of $\dddot{D}$. Set $D_J=D\setminus(\sqcup_{i\in I\setminus J}\dddot{D}_i)$. In the winding case, multiply the labels of the concatenated edges of the skeleton. Define the coproduct of a diagram $D$ by $$\Delta(D)=\sum_{J\subset I}D_J\otimes D_{I\setminus J}.$$
Note that the different relations on Jacobi diagrams respect the coproduct. This provides a notion of {\em group-like} elements, {\em i.e.} elements $G$ such that $\Delta(G)=G\otimes G$. 

Set $\bA=\A\textrm{ or }\tA_R$. We will define a Hopf algebra structure on $\bA(*_C)$. Define the product of two diagrams as the disjoint union. 
The unit $\epsilon:\Q\to\bA(*_C)$ is defined by $\epsilon(1)=\varnothing$ and the counit $\varepsilon:\bA(*_C)\to\Q$ is given by 
$\varepsilon(D)=0$ if $D\neq\varnothing$ and $\varepsilon(\varnothing)=1$. The antipode is given by $D\mapsto(-1)^{|I|}D$. We finally have a structure of a 
graded Hopf algebra on $\bA(*_C)$, where the grading is given by the i--degree. It is known that an element in a graded Hopf algebra is group-like if and only if it is the exponential of a {\em primitive} element, 
{\em i.e.} an element $G$ such that $\Delta(G)=1\otimes G+G\otimes 1$. Here, the primitive elements are the series of connected diagrams. 

The isomorphisms $\chi$ of the previous subsection are not algebra morphisms, but they preserve the coproduct. For the spaces $\bA(*_C)$ with $\bA=\A\textrm{ or }\tA_R$, we have an exponential map associated with the product. For general (beaded, winding) Jacobi diagrams, we will use the notation $\expd$, namely exponential with respect to the disjoint union, for linear combination of diagrams with no univalent vertex embedded in the skeleton, where the disjoint union applies only to the graph part.

    \subsection{Formal Gaussian integration}

This part aims at defining a formal Gaussian integration along $S$ on $\tAw_R(X,*_{C\cup S})$. 
\begin{definition}
 A (beaded, winding) Jacobi diagram on $(X,C\cup S)$ is {\em substantial} if it has no {\em strut}, {\em i.e.} no isolated dashed edge. 
 It is {\em $S$--substantial} if it has no {\em $S$--strut}, {\em i.e.} no strut with both vertices labelled in $S$.
\end{definition}

Given two (beaded, winding) Jacobi diagrams $D$ and $E$ on $(X,C\cup S)$, one of whose is $S$--substantial, define $\langle D,E\rangle_S$ as the sum 
of all diagrams obtained by gluing all $s$--colored vertices of $D$ with all $s$--colored vertices of $E$ for all $s\in S$ ---if the numbers of $s$--colored vertices in $D$ and $E$ do not match for some $s\in S$, then $\langle D,E\rangle_S=0$. In the beaded and winding cases, we must precise the orientation 
and label of the created edges. Such an edge is the gluing of two or three edges in the initial diagrams. Fix arbitrarily the orientation of the new edge. 
Let $P(t)$ (resp. $Q(t)$) be the product of the labels of the initial edges whose orientation coincides (resp. does not coincide). Define the label of the new edge 
as $P(t)Q(t^{-1})$, see Figure \ref{figbracket}. 
\begin{figure} [htb]
$$\left\langle\raisebox{-1.2cm}{
\begin{tikzpicture} [scale=0.9]
\begin{scope}
 \draw[dashed,->] (0,1) -- (0,0.5) node[right] {$t^2$}; \draw[dashed] (0,0.5) -- (0,0); 
 \draw[dashed,->] (0,1) arc (-90:90:0.5); \draw[dashed] (0,2) arc (90:270:0.5);
 \draw (0,0) node[below] {$s'$}; \draw (0,2) node[above] {$t$};
\end{scope}
\begin{scope} [xshift=1.3cm]
 \draw[dashed,->] (0,0) -- (0,1) node[right] {$t$}; \draw[dashed] (0,1) -- (0,2);
 \draw (0,0) node[below] {$s$}; \draw (0,2) node[above] {$s$};
\end{scope}
\end{tikzpicture}}
,\raisebox{-1.2cm}{
\begin{tikzpicture} [scale=0.6]
 \draw[->] (0,0) -- (0,4);
 \draw[dashed] (0,1) -- (1,1) (1.75,0.5) -- (2.5,0) node[right] {$s'$} (1.75,1.5) -- (2.5,2) node[right] {$s$};
 \draw[->,dashed] (1,1) -- (1.75,1.5) node[above] {$t^4$};
 \draw[->,dashed] (1,1) -- (1.75,0.5) node[below] {$t$};
 \draw[dashed,->] (0,3) -- (1.4,3);
 \draw[dashed] (1.4,3) -- (2.3,3);
 \draw (1.4,3.5) node {$t^2$};
 \draw (2.7,3) node {$s$};
\end{tikzpicture}}
\right\rangle\raisebox{-2ex}{${}_{\{s,s'\}}$}\ =\raisebox{-1cm}{
\begin{tikzpicture} 
 \draw[->] (0,0) -- (0,2.4);
 \begin{scope} [xscale=0.4,yscale=0.6]
  \draw[dashed,->] (0,3) -- (1,2.5) (0,1) -- (2,2) -- (1,2.5) node[above right] {$t^3$};
  \draw[dashed,->] (2,2) -- (3,2) (4,2) -- (3,2) node[below] {$t$};
 \end{scope}
 \draw[dashed,->] (2.5,1.2) arc (0:180:0.45) arc (180:360:0.45) node[right] {$t$};
\end{tikzpicture}}
\ +\ \raisebox{-1cm}{
\begin{tikzpicture} 
 \draw[->] (0,0) -- (0,2.4);
 \begin{scope} [xscale=0.4,yscale=0.6]
  \draw[dashed,->] (0,3) -- (1,2.5) (0,1) -- (2,2) -- (1,2.5) node[above right] {$t$};
  \draw[dashed,->] (2,2) -- (3,2) (4,2) -- (3,2) node[below] {$t$};
 \end{scope}
 \draw[dashed,->] (2.5,1.2) arc (0:180:0.45) arc (180:360:0.45) node[right] {$t$};
\end{tikzpicture}}$$
\caption{Bracketting diagrams.} \label{figbracket}
\end{figure}
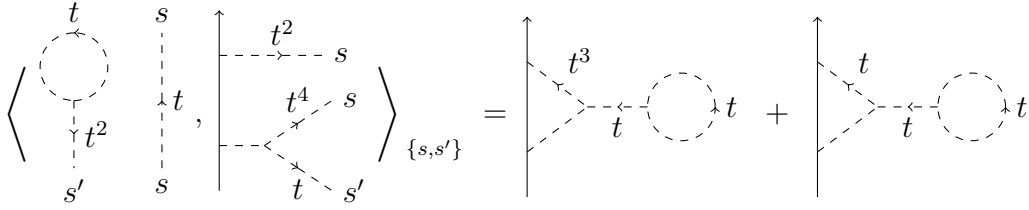
We have the following immediate lemma.
\begin{lemma} \label{lemmawdrel}
 If $D'$ and $E'$ are obtained from $D$ and $E$ by applying the same winding relation on $s$--labelled vertices for some $s\in S$, 
 then $\langle D,E\rangle_S=\langle D',E'\rangle_S$.
\end{lemma}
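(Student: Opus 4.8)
The plan is to verify the identity matching by matching, by checking that the gluing operation of Figure~\ref{figbracket} is insensitive to the winding relation of Figure~\ref{figwindingrel} when the latter is applied simultaneously, with the same data $(s,k)$, to the two arguments. First I would record the structural facts that make this local. A winding relation with data $(s,k)$ changes neither the underlying graph of a diagram nor its number of $s$--colored vertices, so $D$ and $D'$ have the same $s$--colored vertices, and likewise $E$ and $E'$; hence the collection of bijective matchings of the $S$--colored vertices used to define $\langle\cdot,\cdot\rangle_S$ is literally the same for $(D,E)$ and for $(D',E')$, and if the counts of $s$--colored vertices disagree for some $s\in S$ then both brackets vanish. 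Applying the relation also creates no strut, so it preserves $S$--substantiality and $\langle D',E'\rangle_S$ is still defined. It therefore suffices to fix one matching $\sigma$ of all the $S$--colored vertices, to compare the single glued diagram it produces from $(D,E)$ with the one it produces from $(D',E')$, and to show these two diagrams are equal; summing over $\sigma$ then gives the lemma.

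The two glued diagrams have the same underlying graph, so I only need to compare labels edge by edge. Fix an edge $f$ of the glued diagram: it is a concatenation of edges and struts $e_1,\dots,e_m$ of $D$ and $E$, joined at the vertices matched by $\sigma$, whose extreme endpoints (if any) are trivalent, on the skeleton, or labelled by $C$, since an endpoint labelled in $S$ would be matched by $\sigma$ and hence interior to $f$. Orient $f$ and put $\epsilon_i=+1$ if $e_i$ runs forward along $f$ and $\epsilon_i=-1$ otherwise; by the rule of Figure~\ref{figbracket} the label of $f$ is $P(t)\,Q(t^{-1})$ with $P=\prod_{\epsilon_i=+1}\ell(e_i)$, $Q=\prod_{\epsilon_i=-1}\ell(e_i)$, where $\ell(e_i)$ denotes the label of $e_i$. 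The winding relation with data $(s,k)$ multiplies $\ell(e_i)$ by $t^{w_ik}$, where $w_i$ is the sum over the endpoints of $e_i$ that are interior to $f$ and $s$--colored of the sign ($\pm1$) dictated by the orientation of $e_i$ at that endpoint in Figure~\ref{figwindingrel}; extreme endpoints contribute nothing, as they are not $s$--colored. Substituting, the label of $f$ in the diagram glued from $(D',E')$ equals $P(t)\,Q(t^{-1})\cdot t^{k\Sigma}$ with $\Sigma=\sum_i\epsilon_i w_i$.

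The last step is to show $\Sigma=0$. I would reorganize the sum by grouping, for each vertex $P$ that is interior to $f$ and $s$--colored, the two contributions coming from the two edges $e_j,e_{j+1}$ of $f$ meeting at $P$ (a strut of $D$ with both vertices $s$--colored meets $f$ at two such vertices and is handled by the same local computation at each). At $P$ the edge $e_j$ is traversed towards $P$ exactly when $e_{j+1}$ is traversed away from $P$; combining this with the fact that, under whichever convention of Figure~\ref{figwindingrel} one fixes, the sign attached to an edge at an $s$--colored endpoint is determined by the orientation of that edge at $P$, one finds that the contribution of $P$ to $\Sigma$ is $(+1)+(-1)=0$. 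Hence $\Sigma=0$, the two glued diagrams coincide, and $\langle D,E\rangle_S=\langle D',E'\rangle_S$. I do not expect any genuine obstacle: the only delicate point is to keep track of the orientation signs carefully enough that the per-vertex cancellation is transparent, and since the computation is local it is unaffected by the rest of the diagrams.
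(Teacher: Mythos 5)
Your proof is correct: the per-edge bookkeeping and the observation that at each glued $s$--coloured junction the two incident edges contribute opposite signs (so the exponents of $t^k$ cancel) is exactly the computation the paper has in mind when it calls the lemma ``immediate'' and omits any proof. Nothing is missing; you have simply written out the local cancellation the author leaves to the reader.
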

This bracketting defines a $\Q$--bilinear operator on the diagram spaces $\bA(X,*_{C\cup S})$ for $\bA=\A,\ \tA_R,\textrm{ or }\tAw_R$. 
\begin{theorem}[Jackson, Moffatt, Morales] \label{thJMM}
 Assume the $1$--manifold $X$ is a disjoint union of intervals. If $G$ and $H$ are group-like in $\bA(X,*_{C\cup S})$, then $\langle G,H\rangle_S$ is also group-like.
\end{theorem}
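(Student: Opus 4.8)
The plan is to deduce the statement from the fact that the bracket
$$\langle\cdot,\cdot\rangle_S\colon\bA(X,*_{C\cup S})\otimes\bA(X,*_{C\cup S})\longrightarrow\bA(X,*_{C\cup S})$$
is a morphism of coalgebras: in Sweedler notation, if $\Delta D=\sum D'\otimes D''$ and $\Delta E=\sum E'\otimes E''$, then $\Delta\langle D,E\rangle_S=\sum\langle D',E'\rangle_S\otimes\langle D'',E''\rangle_S$, the sum running over the two Sweedler decompositions. Once this is established the theorem is immediate: if $G$ and $H$ are group-like then $\Delta G=G\otimes G$ and $\Delta H=H\otimes H$, whence $\Delta\langle G,H\rangle_S=\langle G,H\rangle_S\otimes\langle G,H\rangle_S$. (We assume, as the definition of the bracket requires, that one of $G,H$ --- say $H$ --- is $S$--substantial; writing $H=\expd(h)$ with $h$ primitive this just means $h$ has no $S$--strut, so the restriction is harmless.)

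First I would reduce the coalgebra identity to the level of representative diagrams; this is legitimate because the relations defining $\A$, $\tA_R$ and $\tAw_R$ respect $\Delta$ (noted above) and the bracket descends to these quotients --- as well as to their further quotients by the link and winding relations, the winding case relying on Lemma~\ref{lemmawdrel}. So fix diagrams $D$ and $E$ whose graph parts have connected components $\dddot{D}_1,\dots,\dddot{D}_m$ and $\dddot{E}_1,\dots,\dddot{E}_n$, and write $\langle D,E\rangle_S=\sum_\sigma D\ast_\sigma E$, the sum ranging over the perfect matchings $\sigma$ of the $s$--coloured vertices of $D$ with those of $E$ (for all $s\in S$ at once), $D\ast_\sigma E$ denoting the resulting glued diagram. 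For a fixed $\sigma$, a component $\dddot{D}_i$ and a component $\dddot{E}_j$ lie in a common connected component of $D\ast_\sigma E$ precisely when they are joined through $\sigma$; hence the connected components of the graph part of $D\ast_\sigma E$ are indexed by those of the bipartite graph $\Gamma_\sigma$ on $\{1,\dots,m\}\sqcup\{1,\dots,n\}$ having one edge for each pair of vertices matched by $\sigma$.

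Consequently, applying $\Delta$ to $D\ast_\sigma E$ amounts to choosing a ``$\sigma$--saturated'' pair $(A,B)$, with $A\subseteq\{1,\dots,m\}$ and $B\subseteq\{1,\dots,n\}$ --- saturated meaning $\sigma$ matches the $S$--legs of $D_A:=D\setminus\bigsqcup_{i\notin A}\dddot{D}_i$ only with those of $E_B$, and likewise for the complements --- for which $\sigma$ splits as $\sigma_A\sqcup\sigma_{A^c}$; this gives
$$\Delta(D\ast_\sigma E)=\sum_{(A,B)}(D_A\ast_{\sigma_A}E_B)\otimes(D_{A^c}\ast_{\sigma_{A^c}}E_{B^c}),$$
the sum over $\sigma$--saturated pairs. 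Exchanging the order of summation and observing that, for a fixed pair $(A,B)$, the admissible $\sigma$ are exactly the pairs made of a matching of the $S$--legs of $D_A$ with those of $E_B$ and a matching of the $S$--legs of $D_{A^c}$ with those of $E_{B^c}$ --- with none existing, in accordance with $\langle D_A,E_B\rangle_S=0$, when the $s$--leg counts disagree for some $s$ --- one obtains
$$\Delta\langle D,E\rangle_S=\sum_{A,B}\langle D_A,E_B\rangle_S\otimes\langle D_{A^c},E_{B^c}\rangle_S=\sum\langle D',E'\rangle_S\otimes\langle D'',E''\rangle_S,$$
using $\sum_A D_A\otimes D_{A^c}=\Delta D$ and $\sum_B E_B\otimes E_{B^c}=\Delta E$. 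This is the coalgebra morphism property, and the theorem follows. The same cluster count gives, equivalently, the identity $\langle\expd(g),\expd(h)\rangle_S=\expd\bigl(c(g,h)\bigr)$, where $c(g,h)$ is the series of connected gluings, exhibiting $\langle G,H\rangle_S$ directly as an exponential of a primitive; the hypothesis that $X$ be a disjoint union of intervals is what places $\bA(X,*_{C\cup S})$ in the graded bialgebra framework where ``group-like'' is synonymous with ``exponential of a primitive'', the cluster analysis itself being blind to the skeleton.

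I expect the main obstacle to be not the cluster combinatorics --- which is the diagrammatic shadow of the exponential / linked-cluster formula and is routine once indexed as above --- but the bookkeeping of beads and orientations. Every edge created by a gluing is the concatenation of two or three oriented, labelled edges of $D$ and $E$ and carries, after an arbitrary choice of orientation of the new edge, the label $P(t)Q(t^{-1})$; one must check that decomposing $D\ast_\sigma E$ into its connected components transports these labels and orientations coherently to $D_A\ast_{\sigma_A}E_B$ and $D_{A^c}\ast_{\sigma_{A^c}}E_{B^c}$, independently of the orientation choices (via the OR relation) and compatibly with the winding relations (Lemma~\ref{lemmawdrel}), and that the label-concatenation rule used in defining $\Delta$ on the skeleton of winding diagrams is compatible with gluing. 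One must also verify that passing to $\bA(X,*_C,\xl_S)$ and $\bA(X,*_C,\xlw_S)$ does not corrupt the connectedness data feeding the count --- it does not, since the link and winding relations only redistribute edges locally around an $S$--coloured vertex --- so that the identity persists in those spaces.
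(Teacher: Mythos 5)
Your proof is correct, but it takes a genuinely different route from the paper's. The paper disposes of the theorem in two sentences: for $X=\varnothing$ it simply cites Jackson--Moffatt--Morales (their Theorem~2.4), and for $X$ a union of intervals it transports the statement through the PBW isomorphism $\chi_{\pi_0(X)}:\bA(*_{\pi_0(X)\cup C\cup S})\iso\bA(X,*_{C\cup S})$, using only that $\chi_{\pi_0(X)}$ preserves both the coproduct and $\langle\cdot,\cdot\rangle_S$. You instead reprove the underlying combinatorial fact from scratch: the bracket is a morphism of coalgebras, established by matching the connected components of $D\ast_\sigma E$ with those of the bipartite gluing graph $\Gamma_\sigma$ and exchanging the two summations. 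Your cluster bookkeeping is sound (the key point, that the $\sigma$--saturated pairs $(A,B)$ are exactly the unions of components of $\Gamma_\sigma$ and hence index $\Delta(D\ast_\sigma E)$, is right, as is the observation that $S$--substantiality passes to the subdiagrams $E_B$ so every bracket appearing is defined), and the verifications you defer --- compatibility of labels, orientations, OR, link and winding relations with the decomposition --- are indeed routine. What each approach buys: the paper's argument is short and delegates the combinatorics, but it implicitly asserts that the JMM result extends verbatim to the beaded and winding flavours of $\bA$, and it genuinely needs the interval hypothesis to have $\chi_{\pi_0(X)}$ available; your argument is self-contained, treats the beads directly, and in fact never uses that $X$ is a union of intervals (your closing remark about group-like being synonymous with exponential-of-primitive is not needed for the main line, since the coalgebra-morphism identity gives $\Delta\langle G,H\rangle_S=\langle G,H\rangle_S\otimes\langle G,H\rangle_S$ outright), so it proves the slightly more general statement for arbitrary compact $X$.
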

\begin{proof}
 When $X=\varnothing$, it is \cite[Theorem 2.4]{JMM}. The case of a non-empty $X$ follows since the isomorphism $\chi_{\pi_0(X)}:\bA(*_{\pi_0(X)\cup C\cup S})\iso\bA(X,*_{C\cup S})$ preserves the coproduct and the bracketting $\langle\cdot,\cdot\rangle_S$.
\end{proof}

\begin{notation}
 If $W=(W_{ij}(t))_{i,j\in S}$ is an $(S,S)$--matrix with coefficients in $\Qt$, we also denote $W=\sum_{i,j\in S}$\raisebox{-2.3ex}{
 \begin{tikzpicture} [scale=0.3]
 \draw[dashed,->] (0,0) -- (0,1);
 \draw[dashed] (0,1) -- (0,2);
 \draw (0.4,-0.2) node {$\scriptscriptstyle{i}$} (0.4,2.2) node {$\scriptscriptstyle{j}$};
 \draw (1.7,0.8) node {$\scriptscriptstyle{W_{ij}(t)}$};
 \end{tikzpicture}}.
\end{notation}

\begin{definition}
 An element $G\in\tAw_{\Qtt}(X,*_{C\cup S})$ is {\em Gaussian} if $G=\expd(\frac{1}{2}W(t))\sqcup H$ where $W(t)$ is an $(S,S)$--matrix with coefficients in $\Qtt$ 
 and $H$ is $S$--substantial. If $\det(W(t))\neq0$, $G$ is {\em non-degenerate} and we set:
 $$\int_S G=\langle\,\expd(-\frac{1}{2}W^{-1}(t)),H\,\rangle_S\ \in\tAw_{\Qt}(X,*_C).$$
\end{definition}
\begin{lemma} \label{lemmaFGI}
 Let $G=\expd(\frac{1}{2}W(t))\sqcup H$ be a non-degenerate Gaussian in $\tAw_{\Qtt}(X,*_{C\cup S})$.
 \begin{itemize}
  \item If a non-degenerate Gaussian $\expd(\frac{1}{2}W(t))\sqcup H'$ is equal to $G$ in $\tAw_{\Qtt}(X,*_C,\xl_S)$, then 
   $\int_S(\expd(\frac{1}{2}W(t))\sqcup H')=\int_S G$.
  \item If $G'=\expd(\frac{1}{2}W'(t))\sqcup H'$ is obtained from $G=\expd(\frac{1}{2}W(t))\sqcup H$ by applying a winding relation, then $\int_S G'=\int_S G$.
 \end{itemize}
\end{lemma}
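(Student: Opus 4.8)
The plan is to treat the two assertions separately, since the invariance under link relations and the invariance under winding relations rely on different mechanisms. For the first assertion, suppose $\expd(\tfrac12 W(t))\sqcup H'$ equals $G=\expd(\tfrac12 W(t))\sqcup H$ in $\tAw_{\Qtt}(X,*_C,\xl_S)$. Then $H'-H$ (or rather the difference of the two Gaussians, but since the exponential factor is the same, it suffices to track $H'-H$ after expanding) is a linear combination of link relations on $S$--labelled vertices. I would first reduce to the case where $H'-H$ is a single such link relation, by bilinearity of $\langle\cdot,\cdot\rangle_S$ in the second argument. Then I would compute $\int_S G'-\int_S G=\langle\expd(-\tfrac12 W^{-1}(t)),H'-H\rangle_S$ and observe that gluing all $s$--colored vertices of the $S$--substantial struts-only diagram $\expd(-\tfrac12 W^{-1}(t))$ against a link relation produces a sum that telescopes: pushing the extra leg along the edges incident to a fixed univalent $s$--vertex, after the struts have been glued on, becomes an instance of the STU relation in $\tAw_{\Qt}(X,*_C)$ applied to the strut edges (together with the adjacent graph edges). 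This is exactly the argument already used in the proof of the preceding Proposition to check that $\chi_S$ sends link relations to zero, transported through the bracketing; so I would invoke that computation rather than redo it. One has to be slightly careful that after gluing struts we may create a new strut or a closed circle carrying a bead, but the STU relation and the OR/Hol relations absorb these.

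For the second assertion, the key point is Lemma \ref{lemmawdrel}, which says that applying the same winding relation to both arguments of $\langle\cdot,\cdot\rangle_S$ does not change the bracket. So the strategy is: given $G'=\expd(\tfrac12 W'(t))\sqcup H'$ obtained from $G$ by pushing $t^k$ at each $s$--labelled vertex for some fixed $s\in S$ and some $k$, I want to realize the passage from $\int_S G=\langle\expd(-\tfrac12 W^{-1}(t)),H\rangle_S$ to $\int_S G'=\langle\expd(-\tfrac12 W'^{-1}(t)),H'\rangle_S$ as an application of Lemma \ref{lemmawdrel}. Concretely, pushing $t^k$ at each $s$--labelled vertex of $H$ gives $H'$ by definition; the same operation applied to the Gaussian part multiplies $W_{sj}(t)$ by $t^{-k}$ and $W_{is}(t)$ by $t^{k}$ (reading off the edge orientations in the strut picture of the Notation box), i.e. $W'(t)=T_k\,W(t)\,T_k$ where $T_k=\mathrm{diag}(\dots,t^{\mp k},\dots)$ is diagonal with a single nontrivial entry in the $s$ slot — hence $W'^{-1}(t)=T_k^{-1}W^{-1}(t)T_k^{-1}$, which is precisely $-\tfrac12 W^{-1}(t)$ with $t^{-k}$ pushed at each $s$--labelled vertex. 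Thus $\expd(-\tfrac12 W'^{-1}(t))$ is obtained from $\expd(-\tfrac12 W^{-1}(t))$ by the \emph{inverse} winding relation at the $s$--vertices, and $H'$ from $H$ by the winding relation; applying Lemma \ref{lemmawdrel} (with the winding relation for $+k$ on $H$ and its matching partner on the co-Gaussian, noting the bracket pairs an "outgoing" end with an "incoming" end so the $t^{\pm k}$ factors cancel in the glued edge label) gives $\langle\expd(-\tfrac12 W'^{-1}(t)),H'\rangle_S=\langle\expd(-\tfrac12 W^{-1}(t)),H\rangle_S$, i.e. $\int_S G'=\int_S G$.

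The main obstacle I anticipate is purely bookkeeping: getting the sign of the exponent and the orientation conventions exactly right in the identification $W'(t)=T_k W(t) T_k$ and in the corresponding statement for $W^{-1}$, since the winding relation multiplies an edge label by $t^{k}$ or $t^{-k}$ according to whether the edge orientation points toward or away from the $s$--vertex, and the bracketing rule $P(t)Q(t^{-1})$ introduces a further orientation-dependent inversion. I would handle this by fixing one normal form for the strut picture of a matrix (as in the Notation box, oriented upward from $i$ to $j$ with label $W_{ij}(t)$) and checking the effect of one winding relation on one strut, then extending by bilinearity; the identity $(T_k W T_k)^{-1}=T_k^{-1}W^{-1}T_k^{-1}$ for $T_k$ diagonal is immediate and does the rest. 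The only other point to verify is that both operations preserve non-degeneracy, which is clear since $\det W'(t)=\det(T_k)^2\det W(t)$ and $\det T_k$ is a unit in $\Qtt$.
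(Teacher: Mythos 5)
Your treatment of the second bullet is essentially the paper's: it reduces to Lemma~\ref{lemmawdrel}. One correction to your bookkeeping, though: pushing $t^k$ at the $s$--labelled vertices of the strut part replaces $W$ by $TWT^{-1}$ with $T=\mathrm{diag}(1,\dots,t^{k},\dots,1)$ (the diagonal entry $W_{ss}$ is unchanged, since the two ends of that strut receive $t^{k}$ and $t^{-k}$), not by $TWT$. As this is a conjugation, $W'^{-1}=TW^{-1}T^{-1}$ is obtained from $W^{-1}$ by the \emph{same} winding relation, not the inverse one; that is exactly the hypothesis of Lemma~\ref{lemmawdrel}, so the argument closes (and $\det W'=\det W$, so non-degeneracy is automatic).

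The first bullet has a genuine gap. The hypothesis is that the two Gaussians agree in the quotient by link relations, i.e.\ that $\expd(\frac12 W)\sqcup(H'-H)$ is a linear combination of link relations; it does \emph{not} follow that $H'-H$ is itself a combination of link relations of $S$--substantial diagrams, which is what your computation of $\langle\expd(-\frac12W^{-1}),H'-H\rangle_S$ requires. A link relation attached to a diagram carrying $S$--struts contains the terms in which the marked leg $*$ is glued onto a strut edge; those terms are not of the form $\expd(\frac12 W)\sqcup(\textrm{$S$--substantial})$, so the exponential factor cannot simply be divided out. This is precisely the point the paper isolates (following \cite[Proposition 2.2]{BNL}): multiplication by the struts $W_{ss}$ does not commute with forming link relations, and the discrepancy terms --- a trivalent vertex sitting on a $W_{ss}$--strut with two free $s$--legs --- vanish only after pairing with $\expd(-\frac12 W^{-1})$, by the AS relation (the two ways of gluing the two $s$--legs at that vertex to the two ends of a $(W^{-1})_{ss}$--strut differ by a transposition of edges at the vertex and cancel). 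Your proposed mechanism --- the STU telescoping used to show that $\chi_S$ kills link relations --- does not transport here: the bracket creates no skeleton component, so STU is not available, and note that $\expd(-\frac12 W^{-1})$ is made entirely of $S$--struts, hence is the opposite of $S$--substantial (it is $H$, not the co-Gaussian, whose $S$--substantiality makes the bracket well defined).
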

\begin{proof}
 The first point is essentially given by the proof of Bar-Natan and Lawrence \cite[Proposition 2.2]{BNL} in the non-beaded case. Here, multiplication 
 by \raisebox{-0.64cm}{
 \begin{tikzpicture} [scale=0.4]
  \draw[dashed,->] (0,0) -- (0,1) node[right] {$\scriptstyle{W_{ss}(t)}$}; \draw[dashed] (0,1) -- (0,2);
  \draw (0.2,-0.2) node {$\scriptstyle{s}$}; \draw (0.2,2.1) node {$\scriptstyle{s}$}; 
 \end{tikzpicture}}
 does not preserve the link relations, but the supplementary terms vanish thanks to the AS relation when applying $\langle\,\cdot\,,\expd(-\frac{1}{2}
 \raisebox{-0.64cm}{
 \begin{tikzpicture} [scale=0.4]
  \draw[dashed,->] (0,0) -- (0,1) node[right] {$\scriptstyle{W_{ss}(t)}$}; \draw[dashed] (0,1) -- (0,2);
  \draw (0.2,-0.2) node {$\scriptstyle{s}$}; \draw (0.2,2.1) node {$\scriptstyle{s}$}; 
 \end{tikzpicture}})\rangle$.
 
 The second point follows from Lemma \ref{lemmawdrel}.
\end{proof}

    \subsection{Categories of diagrams}

For $\bA=\A,\ \tA_R,\textrm{ or }\tAw_R$, define a category $\bA$ whose objects are associative words in the letters $(+,-)$ and whose set of morphisms are 
$\bA(v,u)=\oplus_X\bA(X)$, where $X$ runs over all compact oriented 1--manifolds with boundary identified with the set of letters of $u$ and $v$, 
with the following sign convention: for $u$, a $+$ when the orientation of $X$ goes towards the boundary point and a $-$ when it goes backward, 
and the converse for $v$. Composition is given by vertical juxtaposition, where the label of the created edges in the case of beaded or winding diagrams 
is defined with the same rule as in the definition of $\langle D,E\rangle$. The tensor product given by disjoint union defines a strict 
monoidal structure on $\bA$. 

We finally define the target category of our extended Kricker invariant. 
\begin{notation}
 Given a positive integer $g$ and a symbol $\natural$, set $\lfloor g\rceil^\natural=\{1^\natural,\dots,g^\natural\}$. Set $\lfloor 0\rceil^\natural=\varnothing$. 
\end{notation}
\begin{definition}
 Fix non-negative integers $f$ and $g$. An $R$--beaded Jacobi diagram on $(\varnothing,\lfloor g\rceil^+\cup\lfloor f\rceil^-)$ is {\em top--substantial} 
 if it is $\lfloor g\rceil^+$--substantial.
\end{definition}
Given two such diagrams $D$ and $E$, define their composition $D\circ E$ as the sum of all ways of gluing all $i^+$--labelled vertices of $D$ with all 
$i^-$--labelled vertices of $E$, fixing the orientations and labels of the created edges as in the definition of $\langle D,E\rangle_S$. 
We get a category $\tAts$ whose objects are non-negative integers, with set of morphisms from $g$ to $f$ the subspace of $\tA_{\Qt}(\pgf{g}{f})$ generated 
by top-substantial diagrams. 
The identity of $g$ is $\expd(\sum_{i=1}^g\raisebox{-2.3ex}{
 \begin{tikzpicture} [scale=0.3]
 \draw[dashed] (0,0) -- (0,2);
 \draw (0,0) node[right] {$\scriptstyle{i^-}$} (0,2) node[right] {$\scriptstyle{i^+}$};
 \end{tikzpicture}})$ 
 ---the sum is trivial if $g=0$. 
The tensor product defined by disjoint union of diagrams provides $\tAts$ a strict monoidal structure.

      \section{Winding matrices} \label{secwinding}

In this section, we define winding matrices associated with tangles with disks and bottom-top tangles with paths and interpret them as equivariant linking matrices 
in the case of bottom-top tangles. They will be useful in expressions of our invariant and splitting formulas. 

    \subsection{First definition}

We first define winding matrices in tangles with disks. 
Let $(\gamma,k)$ be a tangle with disks $d_\ell$. Write $\gamma$ as the disjoint union of components $\gamma_i$ for $i=1,\dots,n$. Fix a diagram of $(\gamma,k)$ and 
a base point $\star_i$ for each closed component $\gamma_i$ far from the crossings and the disks. Define the associated {\em winding} $w(\gamma_i,\gamma_j)\in\Ztt$ of 
$\gamma_i$ and $\gamma_j$ in the following way. For a crossing $c$ between $\gamma_i$ and $\gamma_j$, denote $\varepsilon_{ij}(c)$ the algebraic intersection number 
of the union of the disks $d_\ell$ with the path that goes from $\star_i$, or the origin of $\gamma_i$, to $c$ along $\gamma_i$ and then from $c$ to $\star_j$, 
or the end-point of $\gamma_j$, along $\gamma_j$. If $i=j$, change component at the first occurence of $c$. Set
$$w(\gamma_i,\gamma_j)=\left\lbrace\begin{array}{l l}
                         \displaystyle\frac{1}{2}\sum_c \textrm{sg}(c) t^{\varepsilon_{ij}(c)} & \textrm{ if } i\neq j \\ & \\
                         \displaystyle\frac{1}{2}\sum_c \textrm{sg}(c) (t^{\varepsilon_{ii}(c)}+t^{-\varepsilon_{ii}(c)}) & \textrm{ if } i=j
                        \end{array}\right.$$
where the sums are over all crossings between $\gamma_i$ and $\gamma_j$. Note that $w(\gamma_j,\gamma_i)(t)=w(\gamma_i,\gamma_j)(t^{-1})$. 
Now let $I$ and $J$ be two subsets of $\{1,\dots,n\}$ and denote by $\gamma_I$ and $\gamma_J$ the corresponding subtangles of $\gamma$. 
The {\em winding matrix} $W_{\gamma_I\gamma_J}$ associated with the fixed diagram and base points is the matrix whose coefficients are the windings $w(\gamma_i,\gamma_j)$ 
for $i\in I$ and $j\in J$ ---denote it $W_{\gamma_I}$ when $I=J$. In this latter case, note that $W_{\gamma_I}$ is hermitian. 
\begin{lemma} \label{lemmawdmatrix1}
 The winding matrix is invariant by isotopies which do not allow the base points to pass through the disks of the tangle. In particular, when $\gamma$ 
 contains no closed components, it is an isotopy invariant. 
\end{lemma}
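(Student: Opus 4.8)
The plan is to check invariance of each winding $w(\gamma_i,\gamma_j)$ under the three Reidemeister moves (performed in the complement of the disks and of the base points), since an isotopy of $(\gamma,k)$ not dragging base points through the disks is realized by a finite sequence of such moves together with moves of the base points along their components in the complement of the crossings and disks. First I would dispose of the base point moves: if a base point $\star_i$ slides along $\gamma_i$ past a point where $\gamma_i$ crosses the union of the disks $\sqcup_\ell d_\ell$, then for every crossing $c$ on $\gamma_i$ the exponent $\varepsilon_{ij}(c)$ (or $\varepsilon_{ii}(c)$) changes by the \emph{same} shift $\pm1$; so in the $i\neq j$ case $w(\gamma_i,\gamma_j)$ gets multiplied by $t^{\pm1}$, and in the $i=j$ case the symmetrized sum $t^{\varepsilon}+t^{-\varepsilon}$ is replaced by $t^{\varepsilon\pm1}+t^{-\varepsilon\mp1}$, which is \emph{not} invariant. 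This is exactly why the statement only claims invariance ``which do not allow the base points to pass through the disks'': the hypothesis forbids precisely this move, and for the closed-component-free case there are no base points at all, whence the ``in particular'' clause.

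Next I would run through the Reidemeister moves. A type~I move creates or removes a self-crossing of some $\gamma_i$; since the small loop it bounds is null-homotopic in the complement of the disks (the move is performed away from the disks), the path-with-disk intersection number $\varepsilon_{ii}(c)$ at the new crossing is $0$, contributing $\pm\frac12(t^0+t^0)=\pm1$ — but a type~I move also changes the writhe, and here one must be careful: the winding, unlike the linking number, is a sum over crossings \emph{between} $\gamma_i$ and $\gamma_j$, so for $i\neq j$ a type~I move on $\gamma_i$ introduces no crossing between $\gamma_i$ and $\gamma_j$ and changes nothing; for $i=j$ the new self-crossing contributes $\pm1$, but — and this is the point to verify honestly — one checks that in the ``change component at the first occurrence of $c$'' convention a curl contributes a crossing that is counted with opposite signs in its two appearances, or else contributes $0$ because the relevant sub-path is trivial; in any case the net contribution cancels. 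A type~II move adds or deletes two crossings $c_+,c_-$ between $\gamma_i$ and $\gamma_j$ with opposite signs $\mathrm{sg}(c_+)=-\mathrm{sg}(c_-)$, and the two sub-paths from $\star_i$ (resp. origin) to $c_\pm$ and on to $\star_j$ (resp. end) differ by a short arc of $\gamma$ lying in the thin region of the move, hence disjoint from the disks; so $\varepsilon_{ij}(c_+)=\varepsilon_{ij}(c_-)$ and the two terms $\mathrm{sg}(c_\pm)t^{\varepsilon_{ij}(c_\pm)}$ cancel — and likewise for the symmetrized $i=j$ terms. A type~III move permutes three crossings among at most three components; the three crossings before and after the move are in bijection preserving their signs, and for each of them the connecting path changes only by an isotopy through the small triangle of the move, again disjoint from the disks, so the exponents $\varepsilon$ are unchanged and the sum is unchanged.

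The main obstacle I expect is the bookkeeping in the $i=j$ case, specifically the ``change component at the first occurrence of $c$'' convention: one must pin down unambiguously which of the two strands at a self-crossing is the ``first'' one (it is determined by starting from $\star_i$ and traversing $\gamma_i$), verify that the resulting exponent $\varepsilon_{ii}(c)$ is well defined independently of that choice up to the $t\leftrightarrow t^{-1}$ symmetry already built into the formula $t^{\varepsilon}+t^{-\varepsilon}$, and then re-examine the type~I and type~II analyses with this convention made precise. Everything else is a routine local check: each Reidemeister move and each legal base point move alters the crossing set and the connecting paths only inside a small ball disjoint from the disks, so the only quantities that can change — the signs $\mathrm{sg}(c)$ and the exponents $\varepsilon(c)$ — either cancel in pairs (types~II, III) or vanish (type~I), which gives the invariance asserted.
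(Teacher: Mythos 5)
Your Reidemeister~II and~III checks are fine, and you correctly identify why a base point crossing a disk is the forbidden move (it multiplies a row or a column of the matrix by $t^{\pm1}$, exactly as the paper describes right after the lemma). The genuine gap is in your reduction step: ``Reidemeister moves in the complement of the disks'' plus ``base point slides in the complement of the crossings and disks'' is \emph{not} a complete list of elementary events, and the two events you omit are precisely the ones the paper's proof treats first. Since the disks are fixed in the projection while the tangle moves, a generic isotopy will (i) sweep a crossing $c$ across a disk $d_\ell$, and (ii) sweep a crossing past a base point $\star_i$ of a closed component (equivalently, the base point passes through a crossing); neither is a Reidemeister move nor a legal base point slide in your sense. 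For (i) one must check that the two arcs meeting at $c$ acquire intersections with $d_\ell$ of opposite signs relative to the oriented path from $\star_i$ through $c$ to $\star_j$, so each $\varepsilon_{ij}(c)$ is unchanged. For (ii) the path from $\star_i$ to $c$ changes by a full traversal of the closed component $\gamma_i$, and one needs the hypothesis that $\sqcup_\ell\partial d_\ell$ is a boundary link in the complement of $\gamma$, which forces the total algebraic intersection of $\gamma_i$ with $\sqcup_\ell d_\ell$ to vanish; this is the paper's second observation and it is not automatic.

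A second problem is your Reidemeister~I analysis. A single curl on $\gamma_i$ placed away from the disks is \emph{one} crossing $c$ with $\varepsilon_{ii}(c)=0$, contributing $\mathrm{sg}(c)\cdot\frac12(t^{0}+t^{0})=\mathrm{sg}(c)=\pm1$ to $w(\gamma_i,\gamma_i)$: it does not cancel. Each self-crossing occurs once in the sum; the symmetrization $t^{\varepsilon}+t^{-\varepsilon}$ only removes the ambiguity in the choice of ``first occurrence'' of $c$, it does not produce a second, oppositely signed term. The correct resolution is that the tangles here are framed, so only \emph{framed} Reidemeister moves need to be checked: RI is replaced by the cancellation of two opposite curls, whose contributions $+1$ and $-1$ do cancel. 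The diagonal of the winding matrix records the framing, just as the linking matrix does at $t=1$, and would not survive an unframed RI move.
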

\begin{proof}
 First note that the winding matrix is preserved when a crossing passes through a disk $d_\ell$. It is also preserved when a base point of a closed component passes 
 through a crossing since the algebraic intersection number of this component with the union of the disks $d_\ell$ is trivial. Hence it only remains to check invariance with 
 respect to framed Reidemeister moves performed far from the base points and the disks, which is direct. 
\end{proof}
To completely understand the effect of an isotopy on the winding matrix $W_{\gamma_I,\gamma_J}$, we shall describe its modification when a base point passes through 
a disk of the tangle. Fix a closed component $\gamma_i$. Fix a diagram of $(\gamma,k)$ with the base point of $\gamma_i$ located ``just before'' a disk $d_\ell$ of the tangle, 
as shown in the first part of Figure \ref{figbasepoint}.
\begin{figure}[htb] 
\begin{center}
\begin{tikzpicture} [scale=0.5]
\begin{scope}
 \draw (0.5,0) -- (4,0) node[right] {$d_\ell$};
 \draw (3,2) .. controls +(0,-1) and +(1,1) .. (1,-2);
 \draw[->] (1.44,-1.5) -- (1.36,-1.6) node[right] {$\gamma_i$};
 \draw (2.87,0.8) node {$\star_i$};
\end{scope}
 \draw (7,0) node {$\rightsquigarrow$};
\begin{scope} [xshift=9cm]
 \draw (0.5,0) -- (4,0) node[right] {$d_\ell$};
 \draw (3,2) .. controls +(0,-1) and +(1,1) .. (1,-2);
 \draw[->] (1.44,-1.5) -- (1.36,-1.6) node[right] {$\gamma_i$};
 \draw (2.09,-0.8) node {$\star_i$};
\end{scope}
\end{tikzpicture}
\caption{Base point passing through a disk.} \label{figbasepoint}
\end{center}
\end{figure}
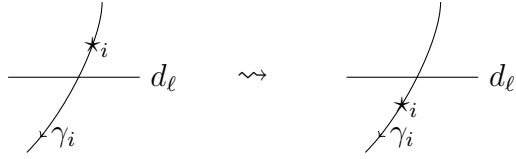
Consider another diagram of $(\gamma,k)$ which differs from the previous one only by the position of the base point $\star_i$, which is as shown 
on the second part of Figure \ref{figbasepoint}. Let $\varepsilon=\pm1$ give the sign of the intersection of $d_\ell$ and $\gamma_i$ which the base point 
passes through. It is easily seen that the winding matrix of the latter diagram is obtained from the winding matrix of the previous one by multiplication 
on the left by $T_i(t^{-\varepsilon})$ if $i\in I$ and on the right by $T_i(t^{\varepsilon})$ if $i\in J$, where $T_i(t)$ is the diagonal matrix whose 
diagonal coefficients are all 1 except a $t$ at the $i^{th}$ position.

We now define winding matrices for bottom-top tangles in $\Q$--cubes. 
Let $(B,K,\gamma)$ be a bottom-top tangle with paths in a $\Q$--cube. Let $(([-1,1]^3,\Xi,\eta),L)$ be a surgery presentation of $(B,K,\gamma)$. 
Denote $(\eta,k)$ the associated tangle with disks. Fix a diagram of $([-1,1]^3,\Xi,\eta)$ and a base point $\star_i$ for each component $L_i$ of $L$. 
Define the {\em winding matrix of $(B,K,\gamma)$}, with coefficients in $\Qt$, as:
$$W_\gamma=W_\eta-W_{\eta L}W_L^{-1}W_{L\eta}.$$
Note that $W_L(1)$ is the linking matrix of $L$, thus $W_L(1)\neq0$ since $B$ is a homology cube. Hence $W_L$ is invertible over $\Qt$. 
\begin{lemma}
 The winding matrix $W_\gamma$ is an isotopy invariant of $(B,K,\gamma)$.
\end{lemma}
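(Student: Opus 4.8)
The plan is to show that the winding matrix $W_\gamma$ does not depend on any of the choices made in its definition: the choice of surgery presentation $(([-1,1]^3,\Xi,\eta),L)$, the choice of diagram of $([-1,1]^3,\Xi,\eta)$, and the choice of base points $\star_i$ on the components $L_i$. Isotopy invariance of $(B,K,\gamma)$ then follows, since two surgery presentations of isotopic bottom-top tangles with paths are related by the moves of Lemma~\ref{lemmapreschir}. The first reduction is to observe, using Lemma~\ref{lemmawdmatrix1}, that the quantities $W_\eta$, $W_{\eta L}$, $W_{L\eta}$, $W_L$ are already invariant under isotopies that keep the base points away from the disks $d_\ell$; so the only residual dependence on the diagram and base points is the effect of moving a base point of a component through a disk. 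We therefore must check two things: (1) invariance under the Kirby moves KI and KII of Lemma~\ref{lemmapreschir}; (2) invariance under the base-point moves described just before the lemma.

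For (2): when a base point $\star_i$ of a component $L_i$ crosses a disk with sign $\varepsilon$, the analysis preceding the lemma shows the full winding matrix of the combined system $\eta \cup L$ is conjugated on the relevant rows and columns by the diagonal matrix $T_i(t^{\pm\varepsilon})$. Partitioning into the $\eta$-block and the $L$-block, multiplication on the $L$-side by $T_i(t^{-\varepsilon})$ on the left and $T_i(t^{\varepsilon})$ on the right transforms $W_L \mapsto T_i(t^{-\varepsilon}) W_L T_i(t^{\varepsilon})$, hence $W_L^{-1} \mapsto T_i(t^{-\varepsilon}) W_L^{-1} T_i(t^{\varepsilon})$, while $W_{\eta L} \mapsto W_{\eta L} T_i(t^{\varepsilon})$ and $W_{L\eta} \mapsto T_i(t^{-\varepsilon}) W_{L\eta}$ (here I use $T_i(t)^{-1} = T_i(t^{-1})$ and that $T_i$ commutes appropriately). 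The correction term then becomes
$$W_{\eta L} T_i(t^{\varepsilon})\, T_i(t^{-\varepsilon}) W_L^{-1} T_i(t^{\varepsilon})\, T_i(t^{-\varepsilon}) W_{L\eta} = W_{\eta L} W_L^{-1} W_{L\eta},$$
so the $T_i$ factors cancel in pairs and $W_\gamma = W_\eta - W_{\eta L}W_L^{-1}W_{L\eta}$ is unchanged. (The analogous cancellation is exactly the Schur-complement computation that recurs throughout.)

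For (1): invariance under the KI move (adding/removing a split $\pm1$-framed component unlinked from $\eta$) is immediate, since such a component contributes a $1\times 1$ diagonal block $(\pm 1)$ to $W_L$ with zero off-diagonal entries linking it to $\eta$ or to the rest of $L$, and a block-diagonal summand contributes nothing to the Schur complement $W_\eta - W_{\eta L}W_L^{-1}W_{L\eta}$. The KII move (handleslide of a surgery component $L_i$ over a surgery component $L_j$, or over a tangle component $\eta_j$) is the substantive case: a handleslide acts on the enlarged system $\eta \cup L$ by an integral (here, $\Ztt$-) elementary row-and-column operation — replacing the class of $L_i$ by $L_i + L_j$ (with an appropriate bead reflecting how the band crosses the disks), i.e. by left- and right-multiplication by an elementary matrix $E$ and its conjugate-transpose $\bar E^{t}$. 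One checks that this matrix congruence $W \mapsto \bar E^{t} W E$ preserves the Schur complement onto the $\eta$-block whenever $E$ restricts to the identity on the $\eta$-indices (which it does, except in the case of a slide over an $\eta_j$, where one must verify the bookkeeping on $W_{\eta L}$ and $W_\eta$ matches — this is where the hermitian/equivariant-linking interpretation foreshadowed in the section's introduction makes the invariance transparent). The main obstacle I expect is precisely this last case — a handleslide of a surgery component over a component of the tangle $\eta$ — because then the operation is not block-upper-triangular with respect to the $(\eta,L)$ splitting and one must argue directly that the change in $W_\eta$ is exactly compensated by the changes in $W_{\eta L}$, $W_{L\eta}$ and $W_L$; organizing this cleanly, rather than as an opaque matrix manipulation, is the crux, and I would handle it by phrasing the whole winding matrix as the equivariant linking matrix of $\bar\gamma$ lifted to the infinite cyclic cover, for which handleslide-invariance is a standard property of equivariant linking numbers.
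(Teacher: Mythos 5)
Your proposal is essentially correct and follows the same overall route as the paper: reduce to (a) independence of diagram and base points for a fixed surgery presentation, handled by Lemma~\ref{lemmawdmatrix1} together with the $T_i(t^{\pm\varepsilon})$ conjugation when a base point crosses a disk (your cancellation in the Schur complement is exactly right), and (b) invariance under the Kirby moves of Lemma~\ref{lemmapreschir}, with KI and the $L$-over-$L$ handleslide disposed of by block-diagonality and congruence. The one place you diverge is the case you single out as the crux, a handleslide of a surgery component $L_j$ over a tangle component $\eta_i$. The paper treats this by the same two-line matrix computation: writing $P$ for the $|L|\times|\eta|$ matrix with a single $1$ in position $(j,i)$, one has $W_{\eta'}=W_\eta+{}^tPW_{L\eta}+W_{\eta L}P+{}^tPW_LP$ and $W_{L\eta'}=W_{L\eta}+W_LP$, and the Schur complement $W_\eta-W_{\eta L}W_L^{-1}W_{L\eta}$ is unchanged. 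Note that this move \emph{is} a block-unipotent congruence of the full $(\eta\cup L)$ winding matrix by $\left(\begin{smallmatrix}I&0\\ P&I\end{smallmatrix}\right)$, and the Schur complement with respect to the $L$-block is invariant under any such lower-unipotent congruence --- so the very observation you already use for the $L$-over-$L$ case covers this one too; your worry that the splitting is destroyed is unfounded. Your proposed fallback, identifying $W_\gamma$ with the equivariant linking matrix $\Lk_e(\tilde\gamma)$ in the infinite cyclic cover (which is manifestly presentation-independent), is also a legitimate route, but be aware that in the paper this identification is Proposition~\ref{propintertopo}, proved only in the next subsection, and its proof is itself a surgery computation of comparable length; so that route buys conceptual transparency at the cost of front-loading a nontrivial identity, whereas the direct computation is shorter than you anticipate.
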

\begin{proof}
 First, when the surgery presentation is fixed, the discussion of the previous subsection implies that the winding matrix does not depend on the choice 
 of diagram and base points. Then independance with respect to Kirby moves is easily checked. We detail the less direct, which is the case when a surgery 
 link component is added to a tangle component. Denote $\eta'$ the tangle obtained from $\eta$ by adding the surgery component $L_j$ to $\eta_i$. 
 We have:
 $$W_{\eta'}=W_\eta+{}^tPW_{L\eta}+W_{\eta L}P+{}^tPW_LP \quad \textrm{and} \quad W_{L\eta'}=W_{L\eta}+W_LP,$$
 where $P$ is the $|L|\times|\eta|$ matrix whose only non-trivial term is a 1 at the $(j,i)$ position. 
\end{proof}

    \subsection{Topological interpretation}

Given a bottom-top tangle with paths in a $\Q$--cube, or a tangle with disks defined from a bottom-top tangle with paths in $[-1,1]^3$ with a surgery link, 
close the components of the tangle, say $\gamma$, by line segments on the top and bottom surfaces to get a link $\bar{\gamma}$. This provides well-defined linking numbers for the tangle components. 
If there is no path or disk, the winding matrix is the linking matrix. It is clear when working in $[-1,1]^3$. For bottom-top tangles in $\Q$--cubes, apply the following 
easy fact, which can be proved by adapting the proof of Proposition~\ref{propintertopo}. 
\begin{fact} \label{fact3.18}
 Let $L$ be an oriented framed link in $[-1,1]^3$ whose linking matrix $\Lk(L)$ is non-degenerate. Let $\xi$ and $\zeta$ be disjoint oriented knots
 in $[-1,1]^3\setminus L$ and denote $\xi',\zeta'$ the copies of $\xi,\zeta$ in the $\Q$--cube obtained from $[-1,1]^3$ by surgery on $L$. Then:
 $$\lk(\xi',\zeta')=\lk(\xi,\zeta)-\Lk(\xi,L).\Lk(L)^{-1}.\Lk(L,\zeta).$$  
\end{fact}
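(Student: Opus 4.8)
The plan is to compute both $\lk(\xi',\zeta')$ and the right-hand side as rational intersection numbers inside the link exterior $E=[-1,1]^3\setminus\textrm{Int}(N(L))$, which sits inside both $[-1,1]^3$ and the surgered $\Q$--cube $W$. First I would record the homology of $E$: since $L$ lies in a ball, $H_1(E;\Q)$ is freely generated by the meridians $\mu_1,\dots,\mu_{|L|}$ of the components of $L$, and a curve $\gamma\subset E$ satisfies $[\gamma]=\sum_i\lk(\gamma,L_i)[\mu_i]$. Writing $f_i$ for the framing of $L_i$ and $\lambda_i$ for the $f_i$--framed longitude of $L_i$ on $\partial N(L_i)$, a Seifert surface of $L_i$ in $[-1,1]^3$ then yields $[\lambda_i]=\sum_j\Lk(L)_{ij}[\mu_j]$ in $H_1(E;\Q)$, where $\Lk(L)_{ii}=f_i$. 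Recall that $W=E\cup\bigsqcup_iV_i$, the solid torus $V_i$ being glued so that the boundary of its meridian disk $\delta_i$ is the curve $\lambda_i$ on $\partial N(L_i)$.

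Next I would produce an explicit rational $2$--chain in $W$ bounding $\zeta'$ (which is just $\zeta$, isotoped into $\textrm{Int}(E)$). Put $c=\Lk(L)^{-1}\Lk(L,\zeta)$, a well-defined rational vector since $\det\Lk(L)\neq0$. From $\Lk(L)\,c=\Lk(L,\zeta)$ and the relations above one checks that $\zeta-\sum_ic_i\lambda_i$ is null-homologous in $E$ over $\Q$, so it bounds a rational $2$--chain $P\subset E$. Then $G:=P+\sum_ic_i\delta_i$ is a rational $2$--chain in $W$ with $\partial G=\zeta$, and therefore $\lk(\xi',\zeta')=G\cdot\xi'$. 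Since $\xi'=\xi$ is disjoint from every $V_i$, the disks $\delta_i$ contribute nothing, so $\lk(\xi',\zeta')=P\cdot\xi$.

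Finally I would interpret $P\cdot\xi$ as a linking number in the ambient ball. As $P$ is a rational $2$--chain in $[-1,1]^3$ with boundary the cycle $\zeta-\sum_ic_i\lambda_i$, we have $P\cdot\xi=\lk\!\left(\xi,\zeta-\sum_ic_i\lambda_i\right)=\lk(\xi,\zeta)-\sum_ic_i\lk(\xi,\lambda_i)$, and $\lk(\xi,\lambda_i)=\lk(\xi,L_i)$ because $\lambda_i$ is a parallel copy of $L_i$ contained in $N(L_i)$ while $\xi$ avoids $N(L_i)$. Hence $\lk(\xi',\zeta')=\lk(\xi,\zeta)-{}^tc\,\Lk(L,\xi)=\lk(\xi,\zeta)-{}^t\Lk(L,\zeta)\,\Lk(L)^{-1}\,\Lk(L,\xi)$, which coincides with $\lk(\xi,\zeta)-\Lk(\xi,L)\,\Lk(L)^{-1}\,\Lk(L,\zeta)$ since $\Lk(L)$ is symmetric and the correction term is a scalar.

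I expect the only real obstacle to be bookkeeping of conventions and signs: one must pin down which slope bounds a disk in the reglued torus $V_i$ so that the framing matrix $\Lk(L)$ is precisely the matrix relating $[\lambda_i]$ to the $[\mu_j]$ in $H_1(E;\Q)$, and then carry orientations through so that the sign in front of the correction term is $-$. All the topological inputs used --- freeness of $H_1(E;\Q)$ on meridians, existence of the rational $2$--chain $P$, well-definedness of the rational intersection pairing of a bounding $2$--chain with a disjoint knot in $[-1,1]^3$, and the fact that $\xi$ can be pushed off the gluing regions --- are standard; alternatively, this argument is the non-equivariant (``$t=1$'') specialization of the proof of Proposition~\ref{propintertopo}, from which the sign conventions may simply be copied.
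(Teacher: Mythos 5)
Your argument is correct, and it is essentially the route the paper intends: Fact~\ref{fact3.18} is stated there with the remark that it follows by adapting the proof of Proposition~\ref{propintertopo}, and your explicit rational $2$--chain $P+\sum_i c_i\delta_i$ is exactly the chain-level ($t=1$) incarnation of that proof's homological elimination of the meridians $m(L_i)$ via the curves $c_i$ that bound after surgery. The linear algebra (solving $\Lk(L)c=\Lk(L,\zeta)$) and the topological inputs coincide, so there is nothing to add beyond the sign bookkeeping you already flag.
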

More generally, when there are paths or disks, the winding matrix evaluated at $t=1$ is the linking matrix. We shall give a similar interpretation 
for the winding matrix at a generic $t$. 

Let $(B,K,\gamma)$ be a bottom-top tangle in a $\Q$--cube. Let $E$ be the exterior of $K$ in $B$ and let $\tilde{E}$ be its covering associated with the kernel of the map 
$\pi_1(E)\to\Z=\langle t\rangle$ which sends the positive meridians of the components of $K$ to $t$. The automorphism group of the covering is isomorphic 
to $\Z$; let $\tau$ be the generator associated with the action of the positive meridians. Let $\zeta$ be a knot in $\tilde{E}$ such that there are 
a rational 2--chain $\Sigma$ in $\tilde{E}$ and $P\in\Qtt$ which satisfy $\partial\Sigma=P(\tau)(\zeta)$. 
Let $\xi$ be another knot in $\tilde{E}$ such that the projections of $\zeta$ and $\xi$ in $E$ are disjoint. Define the {\em equivariant linking number} 
of $\zeta$ and $\xi$ as: 
$$\lk_e(\zeta,\xi)=\frac{1}{P(t)}\sum_{j\in\Z}\langle\Sigma,\tau^j(\xi)\rangle\,t^j\ \in\Qt.$$
The equivariant linking number is well-defined since $H_2(\tilde{E},\Q)=0$ (see for instance \cite[Lemma 2.1]{M4}) 
and satisfies $\lk_e(\tau(\zeta),\xi)=t\,\lk_e(\zeta,\xi)$. 

First consider a tangle with disks $(\gamma,k)$, with disks $d_\ell$ associated to the integer $k$, defined from a surgery presentation of a bottom-top tangle with paths in a $\Q$--cube, so that the closure 
$\bar{\gamma}_i$ of each component $\gamma_i$ is well-defined. Fix a diagram of $(\gamma,k)$ and base points $\star_i$ of 
its components. For an interval component, choose the base point to be its origin. Set $d=\cup_{\ell=1}^kd_\ell$. Let $E$ be the exterior of 
$\partial d$ in $[-1,1]^3$ and let $\tilde{E}$ be the infinite cyclic covering defined above. Let $\tilde{E}_0\subset\tilde{E}$ be a copy of the exterior of $d$ 
in $[-1,1]^3$. Define the lift $\tilde{\gamma_i}$ of $\bar{\gamma}_i$ in $\tilde{E}$ by lifting $\star_i$ in $\tilde{E}_0$. Given two subtangles $\gamma_I$ and 
$\gamma_J$ of $\gamma$, define the {\em equivariant linking matrix} $\Lk_e(\tilde{\gamma}_I,\tilde{\gamma}_J)$ of their lifts with the equivariant linking numbers 
of the $\tilde{\gamma_i}$. 
\begin{lemma}
 $W_{\gamma_I\gamma_J}=\Lk_e(\tilde{\gamma}_I,\tilde{\gamma}_J)$
\end{lemma}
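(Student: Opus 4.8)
The statement is entrywise: $W_{\gamma_I\gamma_J}$ has coefficients $w(\gamma_i,\gamma_j)$ and $\Lk_e(\tilde\gamma_I,\tilde\gamma_J)$ has coefficients $\lk_e(\tilde\gamma_i,\tilde\gamma_j)$, for $i\in I$, $j\in J$, and neither depends on $I,J$; so it suffices to prove $w(\gamma_i,\gamma_j)=\lk_e(\tilde\gamma_i,\tilde\gamma_j)$ for a single pair $i\neq j$, the diagonal case $i=j$ being dealt with at the end. The idea is then to evaluate both sides from the fixed diagram of $(\gamma,k)$ and match them crossing by crossing, upgrading to the beaded setting the classical fact that a linking number is a signed count of crossings.

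First I would fix a concrete model of the covering: $\tilde E$ is an infinite ``necklace'' of copies $\tilde E_0^{(p)}=\tau^p(\tilde E_0)$, $p\in\Z$, of the exterior of $d=\cup_\ell d_\ell$ in $[-1,1]^3$, glued along the cut copies of the disks so that crossing $d_\ell$ positively passes from level $p$ to level $p+1$. Consequently, along the arc of $\bar\gamma_i$ from $\star_i$ (or the origin, for an interval component) to a point $x$, the lift $\tilde\gamma_i$ reaches the level equal to the algebraic intersection number of that arc with $d$. Here the hypothesis that $(\gamma,k)$ comes from a surgery presentation $(([-1,1]^3,\Xi,\eta),L)$ of a bottom--top tangle with paths is used: the boundary-link condition on $\bXi$, resp. the null-homotopy of $L$ in $[-1,1]^3\setminus\Xi$, forces $\bar\gamma_i\cdot d_\ell=0$ for all $i,\ell$, so $\tilde\gamma_i$ closes up, and moreover allows one to choose a Seifert surface $F_i\subset E$ of $\bar\gamma_i$ no loop of which meets $d$ algebraically nontrivially; such an $F_i$ lifts to a surface $\Sigma_i=\tilde F_i\subset\tilde E$ with $\partial\Sigma_i=\tilde\gamma_i$. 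Thus $\tilde\gamma_i$ is rationally (equivariantly) null-homologous in $\tilde E$, the equivariant linking number is defined with $P=1$, and $\lk_e(\tilde\gamma_i,\tilde\gamma_j)=\sum_{p\in\Z}\langle\Sigma_i,\tau^p(\tilde\gamma_j)\rangle\,t^p$.

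It then remains to compute the coefficients $\langle\Sigma_i,\tau^p(\tilde\gamma_j)\rangle$. Projecting to $E$, an intersection point of $\Sigma_i$ with $\tau^p(\tilde\gamma_j)$ is an intersection point $x$ of $F_i$ with $\bar\gamma_j$ at which the level difference between the two sheets equals $p$. Taking for $F_i$ the ``shadow'' surface read off the planar diagram (Seifert's algorithm applied to $\bar\gamma_i$), the intersection points of $F_i$ with $\bar\gamma_j$ reduce, with signs and in the usual way, to the crossings $c$ of $\gamma_i$ with $\gamma_j$, each contributing $\tfrac12\,\mathrm{sg}(c)$; and the level at such a crossing is, by construction of $\Sigma_i$ and of the model of $\tilde E$, the number picked up along $\star_i\to c$ along $\gamma_i$ plus the number picked up along $c\to\star_j$ along $\gamma_j$, i.e.\ exactly $\varepsilon_{ij}(c)$. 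Summing over $p$ gives $\sum_c\tfrac12\,\mathrm{sg}(c)\,t^{\varepsilon_{ij}(c)}=w(\gamma_i,\gamma_j)$, which settles the off-diagonal case. For the diagonal entry one runs the same computation after cutting $\bar\gamma_i$ at $\star_i$, so that a self-crossing $c$ is met as a pair of points carrying the opposite levels $\pm\varepsilon_{ii}(c)$, whence the symmetrization $t^{\varepsilon_{ii}(c)}+t^{-\varepsilon_{ii}(c)}$; equivalently one checks that both sides are hermitian and matches their symmetrizations.

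The point I expect to be the main obstacle is the first one: establishing that $\tilde\gamma_i$ is rationally equivariantly null-homologous in $\tilde E$, so that $\lk_e(\tilde\gamma_i,\tilde\gamma_j)$ is defined at all. Since $H_1(\tilde E;\Q)$ is a torsion-free $\Qtt$-module, this is not implied merely by $\bar\gamma_i\cdot d_\ell=0$ and genuinely requires exhibiting a lifting Seifert surface; this is precisely where the boundary-link and null-homotopy hypotheses are needed. Everything else --- the explicit model of $\tilde E$, the identification of the level of a crossing with $\varepsilon_{ij}(c)$, and the sign/level bookkeeping in passing from intersections of the shadow surface with a strand to crossings of the diagram --- is routine and parallel to the ordinary linking-number computation, but must be carried out carefully because a base point may be separated from a crossing by several disk crossings.
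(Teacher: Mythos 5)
Your overall strategy --- lift a 2--chain bounded by $\tilde{\gamma}_i$ to $\tilde{E}$ and convert the intersection count $\sum_p\langle\Sigma_i,\tau^p(\tilde{\gamma}_j)\rangle t^p$ into a signed, $t$--weighted count of crossings --- is the same as the paper's, and you correctly identify that the real input is the null-homotopy/boundary-link hypothesis rather than the mere vanishing of $\bar{\gamma}_i\cdot d_\ell$. But there is a genuine gap in the execution. You need your surface $F_i$ to satisfy two requirements at once: (i) every loop on $F_i$ must have zero algebraic intersection with $d$, so that $F_i$ lifts to a single surface $\Sigma_i\subset\tilde{E}$ with $\partial\Sigma_i=\tilde{\gamma}_i$; and (ii) $F_i$ must be the concrete ``shadow'' surface of the diagram, so that its intersections with $\bar{\gamma}_j$ can be located at the crossings and their covering levels read off as $\varepsilon_{ij}(c)$. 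These are incompatible in general: the Seifert-algorithm surface has no reason to satisfy (i), and any modification (tubing, etc.) that forces (i) destroys the crossing-by-crossing identification in (ii) by creating new intersections with $\bar{\gamma}_j$ in uncontrolled positions and levels. The paper sidesteps this entirely by using an \emph{immersed disk} $D$ provided by the null-homotopy of $\bar{\gamma}_i$ in $[-1,1]^3\setminus\partial d$: being simply connected, $D$ lifts unconditionally, and instead of reading intersections at crossings it cuts $D$ and $\bar{\gamma}_j$ into level pieces $D_\ell$, $c'_{\ell'}$ and invokes Lemma~\ref{lemmawdmatrix1} to write $w(\gamma_i,\gamma_j)=\sum_{\ell,\ell'}w(c_\ell,c'_{\ell'})t^{\ell-\ell'}$ with each $w(c_\ell,c'_{\ell'})$ an \emph{ordinary} linking number $\langle D_\ell,c'_{\ell'}\rangle$, so that no $t$--weighted crossing bookkeeping against a surface is ever needed.

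A secondary problem is your claim that each crossing contributes $\tfrac12\,\mathrm{sg}(c)\,t^{\varepsilon_{ij}(c)}$ to $\langle\Sigma_i,\cdot\rangle$ ``in the usual way''. A transverse surface--strand intersection contributes $\pm1$, not $\pm\tfrac12$: the surface computation sees only the crossings where $\gamma_j$ passes on one fixed side of $\gamma_i$, each with full weight, whereas $w(\gamma_i,\gamma_j)$ is $\tfrac12$ of the sum over \emph{all} crossings. Classically the under-sum and over-sum coincide because both compute $\lk$, but in the equivariant setting the identity $\sum_{c\ \mathrm{under}}\mathrm{sg}(c)t^{\varepsilon_{ij}(c)}=\sum_{c\ \mathrm{over}}\mathrm{sg}(c)t^{\varepsilon_{ij}(c)}$ is not automatic; it requires running the computation a second time with a chain bounded by $\tilde{\gamma}_j$ and invoking the hermitian symmetry $\lk_e(\zeta,\xi)(t)=\lk_e(\xi,\zeta)(t^{-1})$, which you would need to state and use explicitly. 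Your treatment of the diagonal entries has the same issue compounded by the need for a framed push-off. Both difficulties evaporate in the paper's level-piece decomposition, which is why that route is preferable.
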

\begin{proof}
 Set $\tilde{E}_\ell=\tau^\ell(\tilde{E}_0)$, where $\tau$ is the generator of the automorphism group of the covering $\tilde{E}$ which corresponds to the action of the 
 positive meridians of the $\partial d_\ell$.  
 Fix $i\in I$ and $j\in J$. Since $\bar{\gamma}_i$ is null-homotopic in $[-1,1]^3\setminus\partial d$, it bounds a disk $D$ immersed in $[-1,1]^3\setminus\partial d$. 
 Let $\tilde{D}$ be the lift of $D$ obtained by lifting $\star_i$ in $\tilde{E}_0$. Set $\tilde{D}_\ell=\tilde{D}\cap\tilde{E}_\ell$ and let $D_\ell$ be the 
 image of $\tilde{D}_\ell$ in $E$. Set $c_\ell=\partial D_\ell$ and $\tilde{c}_\ell=\partial\tilde{D}_\ell$. Similarly, define the $c'_\ell$ and $\tilde{c}'_\ell$ from 
 $\bar{\gamma}_j$. Assume the $c'_\ell$ do not meet the $D_\ell$ along the disks of the tangle. 
 
 Thanks to Lemma \ref{lemmawdmatrix1}, we have $w(\gamma_i,\gamma_j)=\sum_{\ell,\ell'\in\Z}w(c_\ell,c'_{\ell'})t^{\ell-\ell'}$ 
 for any choice of base points of the $c_\ell$ and $c'_{\ell'}$. Since these latter curves do not cross the disks of the tangle, we have  
 $w(c_\ell,c'_{\ell'})=\lk(c_\ell,c'_{\ell'})=\langle D_\ell,c'_{\ell'}\rangle$, thus 
 $w(\gamma_i,\gamma_j)=\sum_{\ell,\ell'\in\Z}\langle D_\ell,c'_{\ell'}\rangle t^{\ell-\ell'}$. Lifting both $D_\ell$ and $c'_{\ell'}$ in $\tilde{E}_\ell$ 
 does not change their algebraic intersection number, hence
 \begin{eqnarray*}
  w(\gamma_i,\gamma_j) & = & \sum_{\ell,\ell'\in\Z}\langle \tilde{D}_\ell,\tau^{\ell-\ell'}(\tilde{c}'_{\ell'})\rangle t^{\ell-\ell'} \\
  & = & \sum_{\ell,\ell'\in\Z}\langle \tilde{D}_\ell,\tau^{\ell'}(\tilde{c}'_{\ell-\ell'})\rangle t^{\ell'} \\
  & = & \sum_{\ell'\in\Z}\langle \tilde{D},\tau^{\ell'}(\tilde{\gamma_j})\rangle t^{\ell'} \\
  & = & \lk_e(\tilde{\gamma}_i,\tilde{\gamma_j})
 \end{eqnarray*}
 where the third equality holds since $\tau^{\ell'}(\tilde{c}'_{\ell-\ell'})=\tau^{\ell'}(\tilde{\gamma_j})\cap\tilde{E}_\ell$. 
\end{proof}

Now consider a bottom-top tangle with paths in a $\Q$--cube $(B,K,\gamma)$. Since $\gamma$ is null-homotopic in $B\setminus K$, we have a well-defined 
{\em equivariant linking matrix} $\Lk_e(\tilde{\gamma})$. Here, all components are intervals, so we have a canonical choice of base points. 
\begin{proposition} \label{propintertopo}
 Let $(B,K,\gamma)$ be a bottom-top tangle with paths in a $\Q$--cube. Then $W_\gamma=\Lk_e(\tilde{\gamma})$. 
\end{proposition}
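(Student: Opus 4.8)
The plan is to establish, via the defining formula $W_\gamma=W_\eta-W_{\eta L}W_L^{-1}W_{L\eta}$, an equivariant version of the surgery formula of Fact~\ref{fact3.18} and to feed into it the previous lemma. Fix a surgery presentation $(([-1,1]^3,\Xi,\eta),L)$ of $(B,K,\gamma)$, with associated tangle with disks $(\eta,k)$, disks $d=\cup_\ell d_\ell$, and a diagram with base points as in the previous subsection. Since $\eta$ is a bottom-top tangle, the components of $L$ as well as the closures $\bar\eta_i$ are null-homotopic in $[-1,1]^3\setminus\Xi$, hence in $[-1,1]^3\setminus\partial d$. Applying the previous lemma to the tangle with disks $\eta\cup L$, we get, in the infinite cyclic cover $\hat E$ of the exterior of $\partial d$ in $[-1,1]^3$ (with lifts obtained by lifting the base points into a fixed copy of the exterior of $d$),
$$W_\eta=\Lk_e(\tilde\eta),\qquad W_{\eta L}=\Lk_e(\tilde\eta,\tilde L),\qquad W_{L\eta}=\Lk_e(\tilde L,\tilde\eta),\qquad W_L=\Lk_e(\tilde L).$$
Here $W_L$ has coefficients in $\Qtt$ and $W_L(1)=\Lk(L)$ is nondegenerate since $B$ is a homology cube, so $\det W_L\in\Qtt\setminus\{0\}$ and $W_L$ is invertible over $\Qt$. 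It thus suffices to prove the equivariant surgery formula $\Lk_e(\tilde\gamma)=\Lk_e(\tilde\eta)-\Lk_e(\tilde\eta,\tilde L)\,W_L^{-1}\,\Lk_e(\tilde L,\tilde\eta)$.

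The first step is to realise the cover $\tilde E$ of $B\setminus N(K)$ in terms of $\hat E$. As $L$ is disjoint from $K=\Xi$ and from $d$, we have $B\setminus N(K)=([-1,1]^3\setminus N(\Xi))(L)$; since $L$ is null-homotopic in the complement of $\Xi$ it lifts to $\tilde L\subset\hat E$ with full preimage $\sqcup_{j\in\Z}\tau^j\tilde L$, and $\tilde E$ is obtained from $\hat E$ by equivariant surgery on $\sqcup_{j\in\Z}\tau^j\tilde L$, the framing of $L_m$ lifting to a framing of each $\tau^j\tilde L_m$. Under the inclusion $\hat E\setminus N(\sqcup_j\tau^j\tilde L)\hookrightarrow\tilde E$, with compatible choices of base-point copies, the lift $\tilde\gamma_i$ of $\bar\gamma_i$ is identified with the lift $\tilde\eta_i$ of $\bar\eta_i$, as surgery moves neither the tangle nor its base points; in particular each $\tilde\gamma_i$ is disjoint from the surgery solid tori and from all their $\tau^j$--translates.

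The surgery formula is then proved by the chain-level argument underlying Fact~\ref{fact3.18}, carried out $\tau$--equivariantly. Fix $i$ and a rational $2$--chain $\tilde F_i$ in $\hat E$ with $\partial\tilde F_i=P_i(\tau)\tilde\eta_i$ for some $P_i\in\Qtt$ divisible by $\det W_L$ (possible because $H_1(\hat E;\Q)$ is $\Qtt$--torsion). In $\tilde E$, adjoin to $\tilde F_i$ a rational combination $\sum_{m,j}n_{m,j}\tilde D_{m,j}$ of meridian disks of the surgery solid tori, together with a correcting $2$--chain inside $\hat E\setminus N(\sqcup_j\tau^j\tilde L)$, chosen so that the result $\tilde\Sigma_i$ satisfies $\partial\tilde\Sigma_i=P_i(\tau)\tilde\gamma_i$. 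The boundary of $\tilde D_{m,j}$ is the $\tau^j$--translate of the surgery curve of $L_m$, and $\tilde L_{m,j}$ bounds an immersed disk in $\hat E$; using $\Lk_e(\tilde L)=W_L$ and $\Lk_e(\tilde\eta,\tilde L)=W_{\eta L}$, the condition that the meridian contributions cancel translates into the linear system $N(t)W_L(t)=P_i(t)\,W_{\eta_i L}(t)$ for the row vector $N(t)=(N_m(t))_m$ of generating functions $N_m(t)=\sum_j n_{m,j}t^j$, so $N(t)=P_i(t)\,W_{\eta_i L}(t)\,W_L(t)^{-1}$, a vector of Laurent polynomials thanks to $\det W_L\mid P_i$ (whence finitely many disks suffice). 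Finally, since $\tilde\gamma_j=\tilde\eta_j$ is disjoint from the surgery region, the disks $\tilde D_{m,j}$ miss its $\tau^\ell$--translates; replacing the correcting chain and the meridian disks by immersed disks in $\hat E$ bounded by the relevant surgery curves and using $H_2(\hat E;\Q)=0$, one obtains
$$(\Lk_e(\tilde\gamma))_{ij}=\frac{1}{P_i(t)}\sum_\ell\langle\tilde\Sigma_i,\tau^\ell\tilde\gamma_j\rangle\,t^\ell=W_{\eta_i\eta_j}(t)-\frac{1}{P_i(t)}\sum_m N_m(t)\,W_{L_m\eta_j}(t)=(W_\eta-W_{\eta L}W_L^{-1}W_{L\eta})_{ij},$$
that is, $\Lk_e(\tilde\gamma)=W_\gamma$.

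The main obstacle I expect is the bookkeeping in this last step: setting up the equivariant surgery description of $\tilde E$, putting the auxiliary chains in general position, and tracking the $\Qt$--coefficients through the cancellation so that $W_L^{-1}$ appears precisely in the role of the inverse linking matrix of Fact~\ref{fact3.18} --- in particular the need to clear denominators (via $\det W_L\mid P_i$) so that all $2$--chains remain finite. Apart from this, the argument runs parallel to the non-equivariant case and to the proof of the previous lemma.
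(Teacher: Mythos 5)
Your proposal is correct and follows the same route as the paper's proof: pass to a surgery presentation, use the preceding lemma to identify $W_\eta$, $W_{\eta L}$, $W_{L\eta}$, $W_L$ with equivariant linking matrices in the cover of the exterior of the trivial paths, and reduce everything to the equivariant surgery formula $\Lk_e(\tilde\gamma)=\Lk_e(\tilde\eta)-\Lk_e(\tilde\eta,\tilde L)W_L^{-1}\Lk_e(\tilde L,\tilde\eta)$. The only difference is in how that formula is established: the paper avoids your chain-level bookkeeping by writing $\tilde\eta_x$ and the surgery parallels $c_i$ in meridian coordinates in $H_1$ of the complement of $\cup_{\ell}\left(\tau^\ell(\tilde L)\cup\tau^\ell(\tilde\eta_y)\right)$ (the coefficients being exactly the equivariant linking numbers) and then eliminating the $m(\tilde L_i)$ using that the $c_i$ bound disks after surgery; this is Poincar\'e dual to your explicit construction of $\tilde\Sigma_i$ and dispenses with the denominator-clearing. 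One justification in your write-up is wrong, though harmlessly so: $H_1(\hat E;\Q)$ is not $\Qtt$--torsion when $K$ has $k\geq 2$ components (the infinite cyclic cover of the exterior of $k$ trivial arcs in the cube has $H_1$ a free $\Qtt$--module of rank $k-1$). The existence of $\tilde F_i$ with $\partial\tilde F_i=P_i(\tau)\tilde\eta_i$ for an arbitrarily prescribed $P_i$ follows instead from the fact you already recorded, namely that $\bar\eta_i$ is null-homotopic in $[-1,1]^3\setminus\Xi$: its lift bounds an immersed disk $D_i$ in $\hat E$, and you may take $\tilde F_i=P_i(\tau)D_i$.
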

\begin{proof}
 Let $(([-1,1]^3,\Xi,\eta),L)$ be a surgery presentation of $(B,K,\gamma)$. Fix a diagram of $([-1,1]^3,\Xi,\eta\cup L)$ and base points for the components of 
 $L=\sqcup_{1\leq i\leq n}L_i$. Let $\tilde{E}$ be the infinite cyclic covering of the exterior of $\Xi$ in $[-1,1]^3$. Let $d$ be the disjoint union of disks in $[-1,1]^3$ 
 bounded by $\bXi$. Let $\tilde{L}=\sqcup_{1\leq i\leq n}\tilde{L}_i$, $\tilde{\gamma}$ and $\tilde{\eta}$ be the lifts of $L$, $\gamma$ and $\eta$ in $\tilde{E}$ 
 with all base points in the same copy in $\tilde{E}$ of the exterior of $d$ in $[-1,1]^3$. 
 We have to prove that:
 $$\Lk_e(\tilde{\gamma})=\Lk_e(\tilde{\eta})-\Lk_e(\tilde{\eta},\tilde{L})\Lk_e(\tilde{L})^{-1}\Lk_e(\tilde{L},\tilde{\eta}).$$
 The infinite cyclic covering $\tilde{E}'$ of the exterior of $K$ in $B$ is obtained from $\tilde{E}$ by surgery on $\cup_{\ell\in\Z}\tau^\ell(\tilde{L})$. 
 Let $\tilde{\eta}_x$ and $\tilde{\eta}_y$ be components of $\tilde{\eta}$, and let $\tilde{\gamma}_x$ and $\tilde{\gamma}_y$ be the corresponding components of $\tilde{\gamma}$. 
 For any knot $\lambda$ in $\tilde{E}$ or $\tilde{E}'$, denote $m(\lambda)$ a positive meridian. For $1\leq i\leq n$, let $c_i$ be the parallel of $\tilde{L}_i$ 
 which bounds a disk after surgery. In the group $H_1\left(\tilde{E}\setminus\cup_{\ell\in\Z}\left(\tau^\ell(\tilde{L})\cup\tau^\ell(\tilde{\eta}_y)\right);\Z\right)$, 
 we have
 $$\tilde{\eta}_x=\lk_e(\tilde{\eta}_x,\tilde{\eta}_y)m(\tilde{\eta}_y)+\sum_{i=1}^n\lk_e(\tilde{\eta}_x,\tilde{L}_i)m(\tilde{L}_i)$$
 and
 $$c_i=\lk_e(\tilde{L}_i,\tilde{\eta}_y)m(\tilde{\eta}_y)+\sum_{j=1}^n\lk_e(\tilde{L}_i,\tilde{L}_j)m(\tilde{L}_j),$$
 where multiplication by $t$ is given by the action of $\tau$ in homology. 
 In $H_1(\tilde{E}'\setminus\cup_{\ell\in\Z}\tau^\ell(\tilde{\gamma}_y);\Z)$, this gives 
 $\tilde{\gamma}_x=\lk_e(\tilde{\eta}_x,\tilde{\eta}_y)m(\tilde{\gamma}_y)-\Lk_e(\tilde{\eta}_x,\tilde{L})\Lk_e(\tilde{L})^{-1}\Lk_e(\tilde{L},\tilde{\eta}_y)m(\tilde{\gamma}_y).$
\end{proof}

It is easily checked that the null LP--surgery defined in the introduction provides a move on the set of bottom-top tangles with paths in $\Q$--cubes, which corresponds to the move of null LP--surgery on Lagrangian cobordisms defined at the end of Subsection~\ref{subseccob} {\em via} the map $D$ of Proposition~\ref{propmapD}. Moreover:
\begin{corollary} \label{propinvariancewind}
 The winding matrix of a bottom-top tangle with paths in a $\Q$--cube is invariant under null LP--surgeries.
\end{corollary}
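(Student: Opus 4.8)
The plan is to reduce the corollary to the invariance of equivariant linking numbers under null LP--surgery, and to establish the latter by transporting the defining $2$--chains across the surgery inside the infinite cyclic cover, using the ``null'' and ``Lagrangian--preserving'' hypotheses in tandem.

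First I would apply Proposition~\ref{propintertopo}, which identifies $W_\gamma$ with the equivariant linking matrix $\Lk_e(\tilde\gamma)$ of the lift of $\gamma$ in the infinite cyclic covering $\tilde E\to E=B\setminus K$. As $\Lk_e(\tilde\gamma)$ depends only on $(B,K)$ together with $\gamma$, which is disjoint from the surgery handlebodies $C_i$, it is enough to prove that for any two knots $\xi,\zeta$ in $\tilde E$ whose projections to $E$ are disjoint, the equivariant linking number $\lk_e(\zeta,\xi)$ is unchanged after a null LP--surgery $\chir=(\chir_1,\dots,\chir_n)$ on $(B,K)$. The geometric input is the following: since each $C_i$ is null with respect to $K$, the image of $H_1(C_i;\Z)$ in $H_1(E;\Z)$ is torsion, so $C_i$ lifts to $\tilde E$ (this is the lift already used in the introduction); fixing one lift $\tilde C_i$, the full preimage of $C_i$ is the disjoint union $\sqcup_{\ell\in\Z}\tau^\ell(\tilde C_i)$, $\tau$ denoting the generator of the deck group. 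Exactly as in the proof of Proposition~\ref{propintertopo} (where the analogous statement is used for the surgery link $L$), the infinite cyclic covering $\widetilde{E(\chir)}$ of the exterior of $K$ in $B(\chir)$ is obtained from $\tilde E$ by performing simultaneously, on each $\tau^\ell(\tilde C_i)$, the LP--surgery $\chir_i$ with the lifted boundary identification $\tau^\ell(\tilde h_i)$, where $h_i:\partial C_i\to\partial C_i'$ is the gluing homeomorphism of $\chir_i$. Write $\tilde E^\circ=\tilde E\setminus\sqcup_{i,\ell}\textrm{Int}\,\tau^\ell(\tilde C_i)$ for the common piece of $\tilde E$ and $\widetilde{E(\chir)}$; it is $\tau$--invariant and, after a small isotopy, contains $\xi$, $\zeta$ and all their $\tau$--translates.

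Next I would transport the defining $2$--chain. Take a rational $2$--chain $\Sigma$ in $\tilde E$ and $P\in\Qtt$ with $\partial\Sigma=P(\tau)(\zeta)$, with $\Sigma$ transverse to the surfaces $\partial(\tau^\ell(\tilde C_i))$. For each pair $(i,\ell)$ the restriction $\Sigma\cap\tau^\ell(\tilde C_i)$ is a rational $2$--chain in $\tau^\ell(\tilde C_i)$; as $\zeta\subset\tilde E^\circ$, its boundary is a rational $1$--cycle $z_{i,\ell}$ carried by $\partial(\tau^\ell(\tilde C_i))$, and since this cycle bounds in $\tau^\ell(\tilde C_i)$ its class belongs to the Lagrangian of $\tau^\ell(\tilde C_i)$. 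The covering being trivial over $C_i$, we identify $\tau^\ell(\tilde C_i)$ with $C_i$ compatibly with their boundaries; then the Lagrangian--preserving condition $h_{i*}(\mathcal L_{C_i})=\mathcal L_{C_i'}$ provides a rational $2$--chain $\Sigma'_{i,\ell}$ in $\tau^\ell(\tilde C_i')$ with $\partial\Sigma'_{i,\ell}=z_{i,\ell}$ (boundaries identified through $\tau^\ell(\tilde h_i)$). Reassembling, $\Sigma':=(\Sigma\cap\tilde E^\circ)+\sum_{i,\ell}\Sigma'_{i,\ell}$ is a rational $2$--chain in $\widetilde{E(\chir)}$ with $\partial\Sigma'=P(\tau)(\zeta)$, and it may be used to compute $\lk_e(\zeta,\xi)$ in $\widetilde{E(\chir)}$ --- this being legitimate because $B(\chir)$ is again a $\Q$--cube, so $H_2(\widetilde{E(\chir)};\Q)=0$ (cf. \cite[Lemma 2.1]{M4}).

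Finally I would compare the two computations. For each $j\in\Z$ the translate $\tau^j(\xi)$ lies in $\tilde E^\circ$, hence is disjoint from every $\tau^\ell(\tilde C_i')$, so $\langle\Sigma'_{i,\ell},\tau^j(\xi)\rangle=0$ and $\langle\Sigma',\tau^j(\xi)\rangle=\langle\Sigma\cap\tilde E^\circ,\tau^j(\xi)\rangle$; for the same reason $\tau^j(\xi)$ misses each piece $\Sigma\cap\tau^\ell(\tilde C_i)$, so $\langle\Sigma\cap\tilde E^\circ,\tau^j(\xi)\rangle=\langle\Sigma,\tau^j(\xi)\rangle$. Substituting into $\lk_e(\zeta,\xi)=\frac{1}{P(t)}\sum_{j\in\Z}\langle\Sigma,\tau^j(\xi)\rangle\,t^j$ yields the desired equality, and therefore $W_\gamma=\Lk_e(\tilde\gamma)$ is invariant under null LP--surgery. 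The step I expect to be the main obstacle is the covering bookkeeping: verifying that $\widetilde{E(\chir)}$ really is the surgery of $\tilde E$ on the entire $\Z$--family $\{\tau^\ell(\tilde C_i)\}_{i,\ell}$ (which is where the ``null'' hypothesis is used at full strength), and that the cut $1$--cycles $z_{i,\ell}$ land precisely in the Lagrangian $\mathcal L_{C_i}$ rather than in some larger subspace (which is where the ``Lagrangian--preserving'' hypothesis is used at full strength); the remaining chain--level manipulations are routine and parallel those in the proof of Proposition~\ref{propintertopo}.
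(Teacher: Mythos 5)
Your proof is correct and follows essentially the same route as the paper's: identify $W_\gamma$ with the equivariant linking matrix via Proposition~\ref{propintertopo}, lift the surgery to the infinite cyclic cover using the nullity condition (so that the preimage of each $C_i$ is a disjoint union of homeomorphic copies), and conclude because LP--surgeries preserve (equivariant) linking numbers. The only difference is that the paper simply cites \cite[Lemma~2.1]{M3} for this last fact, whereas you reprove it by transporting the defining $2$--chain across the surgered handlebodies using the Lagrangian--preserving condition --- a correct, self-contained rendering of that cited lemma.
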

\begin{proof}
 Let $(B,K,\gamma)$ be a bottom-top tangle with paths in a $\Q$--cube associated with a Lagrangian cobordism with paths $(M,K)$. Let $\chir$ be a null LP--surgery 
 on $(M,K)$ made of a single $\Q$SK--pair. Let $(M',K')$ be the Lagrangian cobordism with paths obtained by surgery and let $(B',K',\gamma')$ be the associated 
 bottom-top tangle with paths in a $\Q$--cube. Let $\tilde{E}$ be the infinite cyclic covering of the exterior of $K$ in $B$. The nullity condition 
 implies that the preimage of the $\Q$--handlebody $C$ is the disjoint union of $\Q$--handlebodies $C_\ell$ homeomorphic to $C$. The infinite cyclic covering 
 $\tilde{E}'$ of the exterior of $K'$ in $B'$ is obtained from $\tilde{E}$ by null LP--surgeries on all the $C_\ell$. This concludes since LP--surgeries preserve 
 linking numbers (see for instance \cite[Lemma 2.1]{M3}).
\end{proof}

      \section{Construction of an LMO functor on $\tlcob$} \label{secfunctor}

    \subsection{The functor $\Zp:\ttang\to\tAw_{\Qtt}$}

The definition of the functor $\Zp:\ttang\to\tAw_{\Qtt}$ is based on the functor $Z:\tang\to\A$ of \cite{CHM}, which is a renormalization of the Le--Murakami functor 
\cite{LM1,LM2}. We recall in Figure \ref{figfunctorZ} the definition of $Z$ on the elementary $q$--tangles, where $\nu\in\A(\xc)\cong\A(\xd)$ is the value of the 
Kontsevich integral on the zero framed unknot, $\Phi\in\A(\xdop\xdop\xdop)$ is a Drinfeld associator with rational coefficients and 
$\Delta^{+++}_{u_1,u_2,u_3}:\A(\xdop\xdop\xdop)\to\A(\xdop_{u_1u_2u_3})$ is obtained by applying $(|u_i|-1)$ times $\Delta$ on the $i$-th factor.
\begin{figure}[htb] 
$$Z\left(\hspace{-1ex}\raisebox{-0.8cm}{
\begin{tikzpicture} [scale=0.6]
 \draw[->] (0,1) node[above] {$\scriptstyle{(+}$} -- (1,0) node[below] {$\scriptstyle{+)}$};
 \draww{[->] (1,1) node[above] {$\scriptstyle{+)}$} -- (0,0) node[below] {$\scriptstyle{(+}$};}
\end{tikzpicture}}
\right)=\exp\left(\frac{1}{2}\raisebox{-0.5cm}{
\begin{tikzpicture} [scale=0.6]
 \draw[->] (0,2) -- (0,1) -- (1,0);
 \draw[->] (1,2) -- (1,1) -- (0,0);
 \draw[dashed] (0,1.5) -- (1,1.5);
\end{tikzpicture}}\,\right)\in\A\left(\hspace{-0.7ex}\raisebox{-0.2cm}{
\begin{tikzpicture} [scale=0.6]
 \draw[->] (0,1) -- (1,0);
 \draw[->] (1,1) -- (0,0);
\end{tikzpicture}}\right)
\qquad
Z\left(\hspace{-1ex}\raisebox{-0.8cm}{
\begin{tikzpicture} [scale=0.6]
 \draw[->] (1,1) node[above] {$\scriptstyle{+)}$} -- (0,0) node[below] {$\scriptstyle{(+}$};
 \draww{[->] (0,1) node[above] {$\scriptstyle{(+}$} -- (1,0) node[below] {$\scriptstyle{+)}$};}
\end{tikzpicture}}
\right)=\exp\left(-\frac{1}{2}\raisebox{-0.5cm}{
\begin{tikzpicture} [scale=0.6]
 \draw[->] (0,2) -- (0,1) -- (1,0);
 \draw[->] (1,2) -- (1,1) -- (0,0);
 \draw[dashed] (0,1.5) -- (1,1.5);
\end{tikzpicture}}\,\right)\in\A\left(\hspace{-0.7ex}\raisebox{-0.2cm}{
\begin{tikzpicture} [scale=0.6]
 \draw[->] (0,1) -- (1,0);
 \draw[->] (1,1) -- (0,0);
\end{tikzpicture}}\right)$$
$$Z\left(\hspace{-1ex}\raisebox{-0.4cm}{
\begin{tikzpicture} [scale=0.6]
 \draw[->] (1,0) node[below] {$\scriptstyle{-)}$} .. controls +(0,1) and +(0,1) .. (0,0) node[below] {$\scriptstyle{(+}$};
\end{tikzpicture}}
\right)=\raisebox{-0.1cm}{
\begin{tikzpicture} [scale=0.6]
 \draw[->] (2,0) .. controls +(0,0.5) and +(0.5,0) .. (1.5,1) -- (0.5,1) .. controls +(-0.5,0) and +(0,0.5) .. (0,0);
 \draw[fill=white] (1,1) circle (0.3);
 \draw (1,1) node {$\nu$};
\end{tikzpicture}}\,\in\A\left(\hspace{-0.7ex}\raisebox{-0.05cm}{
\begin{tikzpicture} [scale=0.6]
 \draw[->] (1,0) .. controls +(0,1) and +(0,1) .. (0,0);
\end{tikzpicture}}\right)
\qquad
Z\left(\hspace{-1ex}\raisebox{-0.5cm}{
\begin{tikzpicture} [scale=0.6]
 \draw[->] (0,0) node[above] {$\scriptstyle{(+}$} .. controls +(0,-1) and +(0,-1) .. (1,0) node[above] {$\scriptstyle{-)}$};
\end{tikzpicture}}
\right)=\raisebox{-0.3cm}{
\begin{tikzpicture} [scale=0.7]
 \draw[->] (0,0) .. controls +(0,-1) and +(0,-1) .. (1,0);
\end{tikzpicture}}\,\in\A\left(\hspace{-0.7ex}\raisebox{-0.25cm}{
\begin{tikzpicture} [scale=0.6]
 \draw[->] (0,0) .. controls +(0,-1) and +(0,-1) .. (1,0);
\end{tikzpicture}}\right)$$
$$Z\left(\hspace{-1ex}\raisebox{-0.7cm}{
\begin{tikzpicture} [scale=0.6]
 \draw[->] (0,1) node[above] {$\scriptstyle{(u}$} -- (0,0) node[below] {$\scriptstyle{((u}\ $};
 \draw[->] (1.5,1) node[above] {$\scriptstyle{(v}\,$} -- (0.5,0) node[below] {$\,\scriptstyle{v)}$};
 \draw[->] (2,1) node[above] {$\ \scriptstyle{w))}$} -- (2,0) node[below] {$\scriptstyle{w)}$};
\end{tikzpicture}}\right)=\Delta^{+++}_{u,v,w}(\Phi)\,\in\A(\xdop_{uvw})$$
\caption{The functor $Z:\tang\to\A$.} \label{figfunctorZ}
\end{figure}

Let $(\gamma,k)$ be a $q$--tangle with disks. Assume $\gamma$ is transverse to $[-1,1]^2\times\{h_i(k)\}$ 
for all $i\in\{1,\dots,k\}$, and write $\gamma$ as a composition of $q$--tangles $\gamma_i$ by cutting along these levels, see Figure \ref{figdecouptangle}. 
\begin{figure}[htb] 
\begin{center}
\begin{tikzpicture} [xscale=1.5,yscale=0.9]
 \draw (0,0) -- (2,0) -- (2,4) -- (0,4) -- (0,0);
 \foreach \x in {1,2,3} {\draw[dashed] (0,\x) -- (1,\x); \draw (1,\x) -- (2,\x); \draw (2,\x) node[right] {$(v_\x)(w_\x)$};}
 \foreach \x in {0,1,2,3} {\draw (1,\x+0.5) node {$\gamma_\x$};}
\end{tikzpicture}
\end{center} \caption{Cutting a $q$--tangle with disks $(\gamma,3)$.} \label{figdecouptangle}
\end{figure}
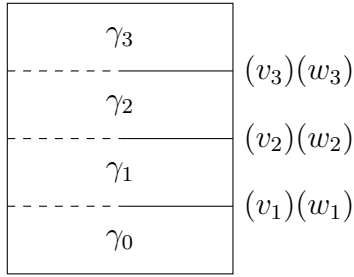
Write the bottom word of $\gamma_i$ as $w_b(\gamma_i)=(v_i)(w_i)$, where $w_i$ corresponds to the part of the tangle which meets the disk $d_i$. Set:
$$\Zp(\gamma,k)=Z(\gamma_0)\circ(I_{v_1}\otimes G_{w_1})\circ Z(\gamma_1)\circ\dots\circ(I_{v_k}\otimes G_{w_k})\circ Z(\gamma_k)\ \in\tAw_{\Qtt}(\gamma),$$
where $I_v$ is the identity on the word $v$ and $G_v$ is obtained from $I_v$ by adding a label $t$ (resp. $t^{-1}$) on skeleton components associated with 
a $-$ sign (resp. a $+$ sign), see Figure~\ref{figIvGv}. 
\begin{figure}[htb] 
\begin{center}
\begin{tikzpicture} [xscale=0.6,yscale=0.5]
 \draw (0,1) node {$I_{\scriptscriptstyle--+-}=$};
 \draw[->] (2,0) -- (2,2); \draw[->] (3,0) -- (3,2); \draw[<-] (4,0) -- (4,2); \draw[->] (5,0) -- (5,2);
\begin{scope} [xshift=10cm]
 \draw (0,1) node {$G_{\scriptscriptstyle--+-}=$};
 \draw[->] (2,0) -- (2,2); \draw[->] (3,0) -- (3,2); \draw[<-] (4,0) -- (4,2); \draw[->] (5,0) -- (5,2);
 \foreach \x in {2,3,4,5} {\draw (\x,1) node {$\scriptscriptstyle{\bullet}$};}
 \foreach \x in {2,3,5} {\draw (\x,1) node[right] {$\scriptstyle{t}$};}
 \draw (3.9,1.1) node[right] {$\scriptstyle{t^{-1}}$};
\end{scope}
\end{tikzpicture}
\end{center} \caption{The diagrams $I_v$ and $G_v$.} \label{figIvGv}
\end{figure}
At the level of objects, $\Zp$ forgets the parentheses. 
Invariance with respect to isotopy and to the cutting of $\gamma$ is due to invariance of the functor $Z$ and the following observation of Kricker 
\cite[Lemma 3.2.4]{Kri}.
\begin{lemma}
 For a winding Jacobi diagram $D\in\tAw_{\Qtt}(w,v)$, we have $G_v\circ D=D\circ G_w$.
\end{lemma}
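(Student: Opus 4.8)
The plan is to move, one skeleton component at a time, the beads that $G_v$ inserts at one end of the skeleton of $D$ to the other end, by iterated use of the \holw\ relation, and to check that the net result is exactly $D\circ G_w$. Since the assertion is $\Q$-linear, I would first reduce to the case where $D$ is a single winding Jacobi diagram on some $1$-manifold $X$ whose boundary is identified with the letters of $w$ and $v$. By the composition rule on beaded diagrams, $G_v\circ D$ (resp. $D\circ G_w$) is obtained from $D$ by leaving the underlying unitrivalent graph and every internal (dashed) edge untouched and multiplying, on each component $X_i$ of $X$, the skeleton edge adjacent to the $v$-end(s) (resp. the $w$-end(s)) of $X_i$ by a power $t^{\pm1}$, the exponent being $+1$ for a ``$-$'' boundary letter and $-1$ for a ``$+$'' one, exactly as in Figure~\ref{figIvGv}. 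So everything is local to the skeleton, and it suffices to treat each $X_i$ separately.

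Fix such a component $X_i$ and consider the bead inserted by $G_v$ near a $v$-endpoint. I would slide it along $X_i$ towards the opposite end: each time it crosses an embedded univalent vertex $x$ (a point where an internal dashed edge of $D$ meets the skeleton), one application of the \holw\ relation transfers the power of $t$ from the skeleton edge on one side of $x$ to the one on the other side, at the cost of multiplying the internal edge at $x$ by a compensating power of $t$. Carrying the bead past all embedded vertices of $X_i$ brings it to the skeleton edge adjacent to the other end of $X_i$, and, using the OR relation together with the orientation conventions built into $G_v$ and $G_w$, one checks that the power it carries there is precisely the one $G_w$ would have inserted. It then remains to see that the compensating powers accumulated on the internal edges cancel. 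This is the core point: an internal edge meeting $X_i$ twice is crossed twice, and since it is oriented once towards and once away from $X_i$ the two \holw\ contributions are mutually inverse; an internal edge meeting $X_i$ only once picks up a net power of $t$, but this power can be pushed off along the dashed graph by the Hol relation and is absorbed at the next embedded vertex it reaches, where it is cancelled against the compensation produced while sliding that component's own bead. When $X_i$ is a ``cap'' component, $G_v$ inserts beads at both ends and $G_w$ none, and sliding both beads towards the interior of the cap shows they cancel; when $X_i$ is a circle, the winding condition (product of skeleton labels equal to $1$) already forces the inserted data to be trivial.

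The main obstacle is exactly this global cancellation of the compensating powers of $t$ on internal edges that meet a component an odd number of times: one cannot dispose of them locally and must track how they propagate through the dashed part of $D$ via the Hol relation until they return to the skeleton. A clean way to organize the bookkeeping is to record a beaded diagram, modulo the Hol and \holw\ relations, by the cohomology class (relative to the embedded vertices) that its edge labels define on the underlying graph, and to observe that replacing the bead of $G_v$ at a $v$-end of $X_i$ by the bead of $G_w$ at the corresponding $w$-end changes the representing cochain by a coboundary; phrased this way the equality $G_v\circ D=D\circ G_w$ is immediate. For the generators $D=Z(\gamma_j)$ occurring in the definition of $\Zp$, which carry no beads and have explicit internal graphs, one can alternatively verify the identity directly, which is all that is needed for the well-definedness of $\Zp$.
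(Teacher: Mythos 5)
Your final reformulation is, in substance, exactly the paper's one-line proof: apply the relations Hol and \holw\ \emph{simultaneously, with the same power of $t$, at every vertex} (every trivalent vertex and every embedded univalent vertex). Viewing the labels multiplicatively as a $1$--cochain on the union of the dashed graph and the skeleton, these relations realize precisely the coboundaries of $0$--cochains supported on the internal vertices; the coboundary of the constant function leaves every internal edge untouched (both endpoints contribute mutually inverse factors) and every interior skeleton edge untouched, while multiplying each skeleton edge adjacent to an endpoint of $X$ by $t^{\pm1}$ --- which is exactly the discrepancy between the beads inserted by $G_v$ and by $G_w$, the sign conventions of Figure~\ref{figIvGv} being designed for this (this also handles caps, cups and circles uniformly). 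Two caveats on your write-up. First, the relevant relative cohomology is that of the whole diagram relative to the \emph{endpoints of the skeleton}, not ``relative to the embedded vertices'': the beads of $G_v$ and $G_w$ sit on skeleton edges, so a cochain considered only on the dashed graph cannot record them. Second, your leading narrative --- sliding the bead along each component separately and then cancelling the accumulated compensations --- is the shaky part: for an internal edge meeting a strand only once, ``pushing the power off along the dashed graph until it is absorbed at the next embedded vertex'' is ambiguous when the dashed graph contains cycles, and the embedded vertex it reaches may lie on a \emph{different} skeleton component, so the claimed cancellation against ``that component's own bead'' is not justified as stated. You correctly identify this as the core difficulty, and the global argument you give at the end resolves it; that is the argument to keep, and it is the paper's.
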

\begin{proof}
 Apply the relations Hol and \holw\ at all vertices of the diagram.
\end{proof}
Furthermore, $\Zp$ is a clearly a functor and it preserves the tensor product on $\tang\otimes\ttang$ since $Z$ is tensor-preserving. 
\begin{lemma} \label{lemmagrouplike}
 For any $q$--tangle with disks $(\gamma,k)$, $\Zp(\gamma,k)$ is group-like.
\end{lemma}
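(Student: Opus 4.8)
The plan is to reduce the group-likeness of $\Zp(\gamma,k)$ to the already-known group-likeness of the Le--Murakami--CHM functor $Z$ together with the fact that the coproduct is compatible with all the operations appearing in the defining formula
$$\Zp(\gamma,k)=Z(\gamma_0)\circ(I_{v_1}\otimes G_{w_1})\circ Z(\gamma_1)\circ\dots\circ(I_{v_k}\otimes G_{w_k})\circ Z(\gamma_k).$$
First I would recall that each $Z(\gamma_i)$ is group-like in $\A$: this is a standard property of the Kontsevich integral and its renormalization in \cite{CHM}, coming from the fact that the elementary values in Figure \ref{figfunctorZ} (the exponentials of struts, the $\nu$-decorated cups and caps, and the image $\Delta^{+++}_{u,v,w}(\Phi)$ of the Drinfeld associator) are all group-like, and that composition and tensor product of group-like morphisms are group-like because the coproduct $\Delta$ on the diagram categories is compatible with $\circ$ and $\otimes$. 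The inclusion $\A(\gamma_i)\hookrightarrow\tAw_{\Qtt}(\gamma_i)$ sends group-like elements to group-like elements since a Jacobi diagram with trivial beads has the same graph part as a plain Jacobi diagram, and the coproduct $\Delta$ was defined (Subsection on Product and coproduct) purely in terms of the graph part (with the extra instruction of multiplying the beads along concatenated skeleton edges, which on a diagram with all beads $1$ changes nothing).

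Next I would check that the ``level-changing'' diagrams $I_{v_i}\otimes G_{w_i}$ are group-like in $\tAw_{\Qtt}$. Indeed $I_v$ and $G_v$ have empty graph part --- they consist only of skeleton intervals, each carrying a single bead $t^{\pm1}$ --- so for such a diagram $D$ one has $\dddot D=\varnothing$, the index set $I$ of graph components is empty, and the coproduct formula $\Delta(D)=\sum_{J\subset I}D_J\otimes D_{I\setminus J}$ gives directly $\Delta(I_v\otimes G_w)=(I_v\otimes G_w)\otimes(I_v\otimes G_w)$. (One should note here that concatenating these beaded intervals with a group-like $Z(\gamma_i)$ does not disturb the coproduct: the beads get multiplied onto the skeleton edges of the graph components of $Z(\gamma_i)$, but the partition of those components is unaffected, so the composite is still group-like.)

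Finally I would assemble the argument: $\Zp(\gamma,k)$ is an alternating composition, in the category $\tAw_{\Qtt}$, of the group-like morphisms $Z(\gamma_i)$ (viewed in $\tAw_{\Qtt}$ via the inclusion) and the group-like morphisms $I_{v_i}\otimes G_{w_i}$; since composition in $\tAw_{\Qtt}$ is a coalgebra morphism --- which is precisely the statement that $\Delta$ respects vertical juxtaposition, a point recorded in Subsection on Product and coproduct when it is said that ``the different relations on Jacobi diagrams respect the coproduct'' and that the categorical composition is defined with the same edge-labelling rule --- a composition of group-like morphisms is group-like, so $\Zp(\gamma,k)$ is group-like. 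The only mild subtlety, and the step I expect to require the most care, is the bookkeeping of beads when one composes a beaded $G_w$ with a graph-bearing $Z(\gamma_i)$: one must make sure that pushing a bead $t^{\pm1}$ through a trivalent vertex via the relations Hol and \holw\ (as in the preceding lemma $G_v\circ D=D\circ G_w$) is exactly what the coproduct's ``multiply the labels of concatenated skeleton edges'' convention records, so that no genuine change of the graph part --- hence of $\Delta$ --- occurs. Everything else is formal.
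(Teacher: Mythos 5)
Your proposal is correct and follows essentially the same route as the paper: the paper's proof likewise reduces the claim to the group-likeness of $Z(\gamma)$ (cited from Le--Murakami rather than re-derived from the elementary values), the observation that the $G_v$ are obviously group-like, and the compatibility of the coproduct with composition. Your extra care about bead bookkeeping is sound but not needed beyond what the paper records.
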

\begin{proof}
 The fact that $Z(\gamma)$ is group-like for a $q$--tangle $\gamma$ follows from \cite[Theorem~5.1]{LM3}. This concludes since the $G_v$ are obviously group-like and the coproduct commutes with the composition. 
\end{proof}

    \subsection{The functor $Z:\ttcub\to\tAw_{\Qt}$}

The next step is to evaluate $\Zp$ on the surgery presentation of a $q$--tangle with paths in a $\Q$--cube. Let $(B,K,\gamma)\in\ttcub(w,v)$. Let 
$(([-1,1]^3,\Xi,\eta),L)$ be a surgery presentation of $(B,K,\gamma)$. 
The trivial link $\hat{\Xi}$ is the union of the boundaries of the disks $d_i=[0,1]\times[-1,1]\times\{h_i(k)\}$, where $k$ is the number of components of $\Xi$. 
Hence we have a $q$--tangle with disks $(\eta\cup L,k)$ and $\Zp(\eta\cup L,k)\in\tAw_{\Qtt}(\eta\cup L)$. Set:
$$\Zc((\Xi,\eta),L)=\chi_{\pi_0(L)}^{-1}(\nu^{\otimes\pi_0(L)}\sharp_{\pi_0(L)}\Zp(\eta\cup L,k))\ \in\tAw_{\Qtt}(\eta,\xlw_{\pi_0(L)})$$
where the connected sum means that a copy of $\nu$ is summed to each component of $L$. Note that $\Zc((\Xi,\eta),L)$ is group-like since $\Zp(\eta\cup L,k)$ 
and $\nu$ are group-like and $\chi_{\pi_0(L)}$ preserves the coproduct.

We want to apply formal Gaussian integration to $\Zc((\Xi,\eta),L)$. We work with a lift $\overline{\Zc((\Xi,\eta),L)}\in\tAw_{\Qtt}(\eta,*_{\pi_0(L)})$. 
Fix a diagram of the $q$--tangle with disks $(\eta\cup L,k)$ transverse to the levels $\{h_i(k)\}$, and fix base points $\star_i$ on each component $L_i$ 
of $L$. Construct $\overline{\Zc((\Xi,\eta),L)}$ following the construction from the beginning of Section \ref{secfunctor} for this diagram, with the skeleton components 
corresponding to the components of $L$ defined as intervals by cutting each component $L_i$ at the base point $\star_i$. 

\begin{lemma} \label{lemmawdmatrix}
 The lift $\overline{\Zc((\Xi,\eta),L)}$ is group-like, and we have:
 $$\overline{\Zc((\Xi,\eta),L)}=\expd\left(\frac{1}{2}W_L\right)\sqcup H,$$ 
 where $W_L$ is the winding matrix associated with our choice of diagram and base points and $H$ is $\pi_0(L)$--substantial. 
\end{lemma}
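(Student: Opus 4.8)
The plan is to establish group-likeness first, deduce the shape $\expd(\tfrac12 W)\sqcup H$ for \emph{some} matrix $W$ from group-likeness alone, and then identify $W$ with $W_L$ by an i--degree $\le 1$ computation.

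First I would check that $\overline{\Zc((\Xi,\eta),L)}$ is group-like. Cutting each component $L_i$ of $L$ at its base point $\star_i$ turns $\eta\cup L$ into a $q$--tangle with disks, so the associated $\Zp$ is group-like by Lemma~\ref{lemmagrouplike}; equivalently, every building block of the construction from the beginning of Section~\ref{secfunctor} is group-like (the values $Z(\gamma_i)$ by \cite{LM3}, the diagrams $G_w$ because they carry no trivalent vertex) and composition, tensor product, connected sum with the group-like $\nu$, and the map $\chi_{\pi_0(L)}^{-1}$ converting the cut $L$--intervals into $\pi_0(L)$--labelled legs all preserve the coproduct. A group-like element of our diagram spaces is $\expd$ of a series of connected diagrams; separating that series into its $\pi_0(L)$--struts and all remaining connected diagrams, and using that $\expd$ sends a sum of logarithms to a disjoint-union product, gives $\overline{\Zc((\Xi,\eta),L)}=\expd(\tfrac12 W)\sqcup H$ with $H$ necessarily $\pi_0(L)$--substantial, where $W=(W_{ij}(t))_{i,j\in\pi_0(L)}$ has entries in $\Qtt$ (because $\Zp$ is valued in $\tAw_{\Qtt}$) and records twice the part of $\overline{\Zc((\Xi,\eta),L)}$ made of a single dashed edge joining the $i$-- and $j$--labelled legs; the OR relation makes $W$ well-defined and hermitian. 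So all that remains is the equality $W=W_L$.

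To identify $W$ with the winding matrix $W_L$ of $L$ in the tangle with disks $(\eta\cup L,k)$ (for the chosen diagram and base points, as in Section~\ref{secwinding}), I would isolate the $\pi_0(L)$--strut part of the construction. It only sees i--degree $\le 1$: the factor $\nu$ contributes nothing below i--degree $2$, and in the functor $Z$ the only strut joining two strands is the $\pm\tfrac12$--strut produced by a double point, which for two strands of the same component is a self-strut obtained by switching branches at the crossing, while cup/cap values and the associator produce no such strut. Before the bead insertions this already gives $W_L(1)$ up to the factor $\tfrac12$. The beads come from the factors $I_v\otimes G_w$ inserted at the disk levels: sliding the skeleton beads $t^{\pm1}$ of a $G_w$ onto the dashed edge of a strut by repeated use of the Hol and \holw\ relations of Figure~\ref{figrelations3} multiplies that edge's label by a power of $t$ which, for the strut of a crossing $c$ between the strands of $L_i$ and $L_j$, equals the signed count of intersections of $\cup_\ell d_\ell$ with the arc of $L_i$ from $\star_i$ to $c$ followed by the arc of $L_j$ from $c$ to $\star_j$ --- that is, $\varepsilon_{ij}(c)$, the orientation and sign conventions in Figure~\ref{figrelations3} being exactly the ones defining $\varepsilon_{ij}(c)$. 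Summing over crossings one gets $\tfrac12\sum_c\mathrm{sg}(c)\,t^{\varepsilon_{ij}(c)}=w(\gamma_i,\gamma_j)$ off the diagonal and, after the symmetrisation induced by OR on a self-strut, $\tfrac12\sum_c\mathrm{sg}(c)(t^{\varepsilon_{ii}(c)}+t^{-\varepsilon_{ii}(c)})=w(\gamma_i,\gamma_i)$ on the diagonal, hence $W=W_L$.

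I expect the last step to be the main obstacle: carefully tracking how beads accumulate on struts through the composition law of $\tAw_{\Qtt}$ and the $G_w$--insertions, matching the orientation and sign conventions of Hol/\holw\ with those in the definition of $\varepsilon_{ij}(c)$, and getting the normalisation right --- in particular the diagonal terms, where the framing of $L$ enters through the writhe under the blackboard-framing convention and where the $\chi^{-1}$--averaging of a self-strut has to be combined with the OR relation. A convenient check is the specialisation at $t=1$, for which the claim reduces to the classical fact (cf.\ \cite{BNL}) that the strut part of the $\nu$--normalised Kontsevich integral of a framed link is half its linking matrix, i.e.\ $W(1)=W_L(1)=\Lk(L)$.
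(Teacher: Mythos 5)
Your proposal follows the paper's own proof essentially step for step: group-likeness as in Lemma~\ref{lemmagrouplike}, the exponential-of-connected-diagrams decomposition isolating the $\pi_0(L)$--strut part, and the crossing-by-crossing identification of that part with $\frac12 W_L$ via the Hol and \holw\ relations, with the OR--symmetrisation handling the diagonal entries. The only point the paper makes explicit that you leave implicit is that one first applies $\chi_{\pi_0(\eta)}^{-1}$ so as to work in $\tA_{\Qtt}(*_{\pi_0(\eta)\cup\pi_0(L)})$, where the disjoint-union product gives a genuine Hopf algebra structure and ``group-like'' really does mean exponential of a series of connected diagrams.
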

\begin{proof}
 Check as in Lemma \ref{lemmagrouplike} that $\overline{\Zc((\Xi,\eta),L)}$ is group-like. We have to compute the part of $\overline{\Zc((\Xi,\eta),L)}$ 
 made of $\pi_0(L)$--struts. We work with $\chi_{\pi_0(\eta)}^{-1}(\overline{\Zc((\Xi,\eta),L)})\in\tA_{\Qtt}(*_{\pi_0(\eta)\cup\pi_0(L)})$, which is also group-like, in order to have a Hopf algebra 
 structure on our diagram space. In particular, the group-like property implies that $\chi_{\pi_0(\eta)}^{-1}(\overline{\Zc((\Xi,\eta),L)})$ is the exponential 
 of a series of connected diagrams. Since $\nu$ and the associator $\Phi$ have no terms with exactly two vertices, the only contributions to the $\pi_0(L)$--struts part 
 come from the crossings between components of $L$. For $i\neq j$, the definition of $Z$ and the \holw\ relation show that the contribution 
 of a crossing $c$ between $L_i$ and $L_j$ is $\chi_{\pi_0(\eta)}^{-1}\raisebox{-0.8ex}{{\textrm{\Huge(}}}\frac{1}{2}\textrm{sg}(c)
 \raisebox{-1cm}{
 \begin{tikzpicture} [xscale=0.6,yscale=0.5]
 \draw[->] (0,0) -- (0,2); \draw (0,0) node[below] {$\scriptstyle{L_i}$}; \draw[->] (2,0) -- (2,2); \draw (2,0) node[below] {$\scriptstyle{L_j}$};
 \draw[dashed,->] (0,1) -- (1,1); \draw[dashed] (1,1) -- (2,1); \draw (1,1) node[above] {$\scriptstyle{t^{\varepsilon_{ij}(c)}}$};
 \end{tikzpicture}}$\raisebox{-0.8ex}{{\Huge)}}. 
 Hence the contribution of all crossings between $L_i$ and $L_j$ is \raisebox{-1cm}{
 \begin{tikzpicture} [xscale=0.7,yscale=0.5]
 \begin{scope}
  \draw[dashed,->] (0,0) -- (0,1); \draw[dashed] (0,1) -- (0,2);
  \draw (0,0) node[below] {$\scriptstyle{L_i}$}; \draw (0,2) node[above] {$\scriptstyle{L_j}$}; \draw (0,1) node[right] {$\scriptstyle{(W_L)_{ij}}$};
 \end{scope}
  \draw (2.3,0.7) node {$=$};
 \begin{scope} [xshift=3.3cm]
  \draw[dashed,->] (0,0) -- (0,1); \draw[dashed] (0,1) -- (0,2);
  \draw (0,0) node[below] {$\scriptstyle{L_j}$}; \draw (0,2) node[above] {$\scriptstyle{L_i}$}; \draw (0,1) node[right] {$\scriptstyle{(W_L)_{ji}}$};
 \end{scope}
 \end{tikzpicture}}.
 For $i=j$, the contribution of a self-crossing of $L_i$ is:
 $$\chi_{\pi_0(\eta)}^{-1}\left(\frac{1}{2}\textrm{sg}(c)
 \raisebox{-0.9cm}{
 \begin{tikzpicture} [xscale=0.7,yscale=0.7]
  \draw[->] (0,0) -- (0,2); \draw (0,0) node[below] {$\scriptstyle{L_i}$};
  \draw[dashed,->] (0,0.5) arc (-90:0:0.5); \draw[dashed] (0.5,1) arc (0:90:0.5); \draw (0.5,1) node[right] {$\scriptstyle{t^{\varepsilon_{ii}(c)}}$};
 \end{tikzpicture}}\right)
 =\textrm{sg}(c)\left(\raisebox{-0.9cm}{
 \begin{tikzpicture} [xscale=0.7,yscale=0.5]
  \draw[dashed,->] (0,0) -- (0,1); \draw[dashed] (0,1) -- (0,2);
  \draw (0,0) node[below] {$\scriptstyle{L_i}$}; \draw (0,2) node[above] {$\scriptstyle{L_i}$}; \draw (0,1) node[right] {$\scriptstyle{t^{\varepsilon_{ii}(c)}}$};
 \end{tikzpicture}}
 +\frac{1}{2}\raisebox{-0.9cm}{
 \begin{tikzpicture} [xscale=0.5,yscale=0.5]
  \draw[dashed] (0,0) -- (0,1); \draw[dashed,->] (0,1) arc (-90:90:0.5); \draw[dashed] (0,2) arc (90:270:0.5);
  \draw (0,0) node[below] {$\scriptstyle{L_i}$}; \draw (0,2) node[above] {$\scriptstyle{t^{\varepsilon_{ii}(c)}}$};
 \end{tikzpicture}}\right).
 $$ 
 Summed over all self-crossings of $L_i$, we get as strut part:
 $$\sum_c\frac{1}{2}\textrm{sg}(c)\raisebox{-0.9cm}{
 \begin{tikzpicture} [xscale=0.7,yscale=0.5]
  \draw[dashed,->] (0,0) -- (0,1); \draw[dashed] (0,1) -- (0,2);
  \draw (0,0) node[below] {$\scriptstyle{L_i}$}; \draw (0,2) node[above] {$\scriptstyle{L_i}$}; \draw (0,1) node[right] {$\scriptstyle{t^{\varepsilon_{ii}(c)}}$};
 \end{tikzpicture}}
 =\frac{1}{2}\raisebox{-0.9cm}{
 \begin{tikzpicture} [xscale=0.7,yscale=0.5]
  \draw[dashed,->] (0,0) -- (0,1); \draw[dashed] (0,1) -- (0,2);
  \draw (0,0) node[below] {$\scriptstyle{L_i}$}; \draw (0,2) node[above] {$\scriptstyle{L_i}$}; \draw (0,1) node[right] {$\scriptstyle{(W_L)_{ii}}$};
 \end{tikzpicture}}.$$
 Hence $\chi_{\pi_0(\eta)}^{-1}(\overline{\Zc((\Xi,\eta),L)})=\expd\left(\frac{1}{2}W_L\right)\sqcup H'$ where $H'\in\tA_{\Qtt}(*_{\pi_0(\eta)\cup\pi_0(L)})$ 
 is $\pi_0(L)$--substantial. Set $H=\chi_{\pi_0(\eta)}(H')$.
\end{proof}
The matrix $W_L(1)$ is the linking matrix of the link $L$, hence it is the presentation matrix of the first homology group of a $\Q$--cube. 
Thus $\det(W_L(1))\neq0$ and $\overline{\Zc((\Xi,\eta),L)}$ is a non-degenerate Gaussian. Lemma \ref{lemmaFGI} implies:
\begin{lemma}
 The formal Gaussian integral $\int_{\pi_0(L)}\overline{\Zc((\Xi,\eta),L)}$ does not depend on the lift 
 $\overline{\Zc((\Xi,\eta),L)}\in\tAw_{\Qtt}(\eta,*_{\pi_0(L)})$ of $\Zc((\Xi,\eta),L)\in\tAw_{\Qtt}(\eta,\xlw_{\pi_0(L)})$.
\end{lemma}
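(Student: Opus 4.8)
The plan is to show that any two lifts of $\Zc((\Xi,\eta),L)$ produced by the construction are joined by a finite chain of moves, each of which preserves the formal Gaussian integral by Lemma~\ref{lemmaFGI}. Recall that such a lift is determined by a choice of diagram of the $q$--tangle with disks $(\eta\cup L,k)$ transverse to the levels $\{h_i(k)\}$ together with a choice of base points $\star_i$ on the components $L_i$; by Lemma~\ref{lemmawdmatrix} and the paragraph following it every lift is a non-degenerate Gaussian, so that $\int_{\pi_0(L)}$ makes sense on each of them.

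First I would reduce to changes of base points. For a fixed system of base points, placed away from the disks $d_\ell$ (always possible), the lift is independent of the chosen diagram: this is exactly the argument that makes $\Zp$ well defined, namely invariance of the Kontsevich functor $Z$ under framed Reidemeister moves performed far from the base points and the disks, invariance of the winding data (Lemma~\ref{lemmawdmatrix1} and its proof, applied at the level of the whole diagram) when a crossing slides through a disk, and the identity $G_v\circ D=D\circ G_w$ used to cut along the levels. It then suffices to compare the lifts attached to one diagram and two systems of base points, and any two such systems are connected by isotopies that push base points through the disks, one intersection at a time.

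Next I would check that pushing a base point $\star_i$ through a disk $d_\ell$, with $\varepsilon=\pm1$ the sign of the corresponding intersection, transforms $\overline{\Zc((\Xi,\eta),L)}$ by a single winding relation on $i$--labelled vertices: on the strut part this is the computation of Section~\ref{secwinding} (the winding matrix $W_L$ gets multiplied on the left by $T_i(t^{-\varepsilon})$ and on the right by $T_i(t^{\varepsilon})$), and on the $\pi_0(L)$--substantial part it is the corresponding relabelling of the edges at the $i$--labelled vertices, because in the $\Zp$--construction the only place where the strand $L_i$ sees the level $h_\ell$ that the cut point crosses is the single factor $G_{w_\ell}$, which amounts to a power $t^{\pm\varepsilon}$ on that strand. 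Since the transformed element is again a non-degenerate Gaussian $\expd(\tfrac12 W_L')\sqcup H'$, the second item of Lemma~\ref{lemmaFGI} gives equality of the two integrals; and if two representatives coming from the same system of base points should differ instead by a link relation on $\pi_0(L)$--labelled vertices, that relation does not change the strut matrix $W_L$, so the first item of Lemma~\ref{lemmaFGI} applies. Iterating over the finitely many elementary moves needed then finishes the proof.

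I expect the main obstacle to be this last point: matching a passage of a base point through a disk with an honest winding relation on the whole diagram $\overline{\Zc((\Xi,\eta),L)}$, not merely on its winding matrix, which requires following how the $G_{w_\ell}$ factors interact with the PBW isomorphism $\chi_{\pi_0(L)}$ used to turn the $L$--skeleton into $\pi_0(L)$--labels.
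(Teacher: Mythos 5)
Your argument is sound in substance and correctly identifies Lemma~\ref{lemmaFGI} as the engine, but it takes a long topological detour where the paper's proof is a one-line algebraic observation. A ``lift'' here means any preimage of $\Zc((\Xi,\eta),L)$ under the quotient map $\tAw_{\Qtt}(\eta,*_{\pi_0(L)})\to\tAw_{\Qtt}(\eta,\xlw_{\pi_0(L)})$, and that quotient is \emph{by definition} the quotient by the link relations and the winding relations on the $\pi_0(L)$--labelled vertices; hence any two lifts differ exactly by such relations, and Lemma~\ref{lemmaFGI} says precisely that the formal Gaussian integral is unchanged by both (first item for link relations, second for winding relations). No analysis of diagrams, base points or isotopies is needed, and the ``main obstacle'' you flag --- realizing a base-point passage through a disk as a winding relation on the whole of $\overline{\Zc((\Xi,\eta),L)}$ rather than just on $W_L$ --- does not arise for the statement as given. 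What your argument really addresses is the separate (and also needed, but not stated here) fact that the construction produces, for every choice of diagram and base points, a genuine lift of the same element $\Zc((\Xi,\eta),L)$; that is already guaranteed by the isotopy invariance of $\Zp$ and by the fact that $\chi_{\pi_0(L)}^{-1}$ lands in the quotient space. Note moreover that, read as a proof of the lemma as stated, your chain of moves is incomplete: base-point changes only realize winding relations of the special diagonal form $T_i(t^{\pm1})$ applied to the whole Gaussian, whereas two arbitrary lifts may differ by arbitrary winding relations applied to individual diagrams; the algebraic reading covers these for free, which is what the paper's shorter argument buys.
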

This allows to set:
$$\int_{\pi_0(L)}\Zc((\Xi,\eta),L)=\int_{\pi_0(L)}\overline{\Zc((\Xi,\eta),L)}\quad\in\tAw_{\Qt}(\gamma).$$

\begin{proposition}
 Let $(B,K,\gamma)$ be a $q$--tangle with paths in a $\Q$--cube. Fix a surgery presentation $(([-1,1]^3,\Xi,\eta),L)$ of $(B,K,\gamma)$. Then:
 $$Z(B,K,\gamma)=U_+^{-\sigma_+(L)}\sqcup U_-^{-\sigma_-(L)}\sqcup \int_{\pi_0(L)}\Zc((\Xi,\eta),L)\quad\in\tAw_{\Qt}(\gamma),$$
 where $U_{\pm}=\Zc((\varnothing,\varnothing),\raisebox{-0.1cm}{
 \begin{tikzpicture}
  \draw[->] (0,0) arc (0:360:0.2) node[right] {$\scriptstyle{\pm1}$};
 \end{tikzpicture}})$, defines a functor $Z:\ttcub\to\tAw_{\Qt}$ which preserves the tensor product on $\tcub\otimes\ttcub$. 
\end{proposition}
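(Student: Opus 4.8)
The plan is to follow the construction of the Aarhus integral of Bar-Natan--Garoufalidis--Rozansky--Thurston \cite{AA1,AA2} and of the LMO functor of Cheptea--Habiro--Massuyeau \cite{CHM}, transposed to the beaded and winding setting of Sections~\ref{secwinding} and~\ref{secfunctor}. Three things have to be established: that the right-hand side does not depend on the auxiliary data (the diagram of $\eta\cup L$ transverse to the levels $\{h_i(k)\}$, the base points of $L$, and the surgery presentation $(([-1,1]^3,\Xi,\eta),L)$ itself), so that $Z(B,K,\gamma)$ is well defined; that $Z$ is functorial; and that $Z$ preserves the tensor product on $\tcub\otimes\ttcub$.

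\emph{Well-definedness.} By Lemma~\ref{lemmawdmatrix} the lift $\overline{\Zc((\Xi,\eta),L)}$ is a non-degenerate Gaussian with quadratic part $\frac12 W_L$, and by the lemma preceding the statement its formal Gaussian integral is independent of the chosen lift. A change of diagram or of base points of $L$ multiplies $W_L$ on each side by base-point-change matrices that are trivial at $t=1$ (see Section~\ref{secwinding}), hence changes neither $\int_{\pi_0(L)}\Zc((\Xi,\eta),L)$ nor the signatures $\sigma_\pm(L)$, which are read off from $W_L(1)$. By Lemma~\ref{lemmapreschir}, it then suffices to check invariance under the Kirby moves KI and KII. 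Under a KII handleslide, $\overline{\Zc((\Xi,\eta),L)}$ is transformed exactly as in the proof that $W_\gamma$ is an isotopy invariant: the winding matrix undergoes a unipotent congruence $W\mapsto {}^{t}\!A\,W\,A$ with $A(1)$ unipotent over $\R$, and the $\pi_0(L)$-substantial part transforms by the corresponding diagrammatic operation on legs. That $\int_{\pi_0(L)}$ is unchanged is the diagrammatic ``completion of the square'', i.e.\ the argument of Bar-Natan--Lawrence \cite[Proposition~2.2]{BNL} and of \cite{AA2} carried out over $\Qtt$-beads, the correction terms being controlled by Lemma~\ref{lemmaFGI}; and $\sigma_\pm(L)$ is preserved since it depends only on the congruence class of $W_L(1)$. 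Under a KI blow-up/blow-down, adjoining a split $\pm1$-framed unknot unknotted with $\eta$ replaces the integrand by its disjoint union with the one-component contribution of that unknot and replaces $W_L$ by $W_L\oplus(\pm1)$; hence $\sigma_\pm(L)$ increases by $1$, $\sigma_\mp(L)$ is unchanged, and, the new block being orthogonal to the rest, the Gaussian integral factors, the extra factor being exactly $U_\pm$ by definition. Since $U_\pm$ is group-like, hence invertible in the completed diagram algebra, this extra factor is absorbed into $U_\pm^{-\sigma_\pm(L)}$, and the right-hand side is left unchanged.

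\emph{Functoriality and tensor product.} A surgery presentation of a composite $(B,K,\gamma)\circ(B',K',\gamma')$ in $\ttcub$ is obtained by vertically stacking surgery presentations of the two factors, the disks of the factors becoming the disks of the composite. The two surgery links then lie in disjoint sub-cubes, so the winding matrix of their union is block-diagonal, $\sigma_\pm$ is additive over this disjoint union, and $\int_{\pi_0(L)\sqcup\pi_0(L')}$ is the composite of $\int_{\pi_0(L)}$ and $\int_{\pi_0(L')}$, there being no strut joining the two blocks. Since $\Zp$ is a functor and composition of diagrams glues only the external top and bottom legs carried by the tangle --- an operation that commutes with integrating out the internal legs $\pi_0(L)$ and $\pi_0(L')$ --- one obtains $Z\big((B,K,\gamma)\circ(B',K',\gamma')\big)=Z(B,K,\gamma)\circ Z(B',K',\gamma')$; and the empty surgery link presents the identity $q$-tangle with paths, so $Z$ sends identities to identities. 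Preservation of the tensor product is the same argument with horizontal juxtaposition in the $x$ direction, using that $\Zp$ is tensor-preserving and that the surgery links again remain in disjoint sub-cubes.

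\emph{Main obstacle.} The delicate point is the handleslide invariance KII: one has to rerun the completion of the square of formal Gaussian integration with the winding matrix $W_L$ (with entries in $\Qt$) in the role of the ordinary linking matrix, while simultaneously verifying that the signature normalization $U_\pm^{-\sigma_\pm(L)}$ is a handleslide invariant. Once Lemmas~\ref{lemmapreschir}, \ref{lemmaFGI}, \ref{lemmawdmatrix} and the explicit behaviour of the winding matrix under Kirby moves are in place, the remaining verifications are routine bookkeeping.
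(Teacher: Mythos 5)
Your proposal follows essentially the same route as the paper: reduce well-definedness to the Kirby moves of Lemma~\ref{lemmapreschir}, absorb KI into the signature normalization $U_\pm^{-\sigma_\pm(L)}$, and handle KII by rerunning the formal-Gaussian ``completion of the square'' with the winding matrix in place of the linking matrix. Two remarks. First, there is one omission: a surgery presentation comes with an \emph{orientation} of $L$, and Lemma~\ref{lemmapreschir} relates unoriented surgery links, so you must also check that the right-hand side is unchanged when a component of $L$ is reversed (the paper does this by invoking the argument of \cite[Proposition~3.1]{AA2}); your list of auxiliary data does not include this, and it is not covered by the base-point or Kirby-move discussions. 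Second, for the KII step you defer to the Bar-Natan--Lawrence/\AA rhus completion of the square ``carried out over $\Qtt$-beads''; this is exactly the delicate point you flag, and it is not a routine transposition --- it is the content of \cite[Theorem~4 and Lemma~5.6]{GK}, which adapt \cite[Proposition~1]{LMMO} to the beaded setting and show that the resulting relation between the two integrands is killed by integration along the surgery component (including the case where a surgery component is slid over a tangle component). Citing that result, as the paper does, closes the argument; as written, your proof leaves the hardest computation asserted rather than referenced or performed. Your explicit treatment of functoriality and tensor-preservation (stacking surgery presentations, block-diagonal winding matrices, additivity of $\sigma_\pm$) is sound and in fact more detailed than what the paper records.
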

\begin{proof}
 We have to check that $Z(B,K,\gamma)$ does not depend on the surgery presentation. Independance with respect to the orientation of the components of $L$ 
 follows from the argument of \cite[Proposition 3.1]{AA2}. The normalization term $U_+^{-\sigma_+(L)}\sqcup U_-^{-\sigma_-(L)}$ ensures independance with respect 
 to the KI move as usual. Independance with respect to the KII move mainly follows from \cite[Section 5.4]{GK}. More precisely, the argument of \cite[Theorem 4]{GK} 
 adapts \cite[Proposition 1]{LMMO} to relate the values of $\overline{\Zc((\Xi,\eta),L)}$ for surgery links that differ from each other by 
 a KII move. Then \cite[Lemma 5.6]{GK} shows that this implies the invariance of the formal Gaussian integral. As noted in \cite[Section 5.1]{AA2}, 
 the argument remains valid when a surgery component is added to a tangle component since \cite[Lemma 5.6]{GK} uses integration along the surgery component. 
\end{proof}

Restricting the functor $Z:\ttcub\to\tAw_{\Qt}$ to $q$--tangles in $\Q$--cubes with no path, one recovers the functor $Z:\tcub\to\A$ of \cite[Definition 3.16]{CHM}. 
When $\gamma$ is a bottom-top tangle and $K=\varnothing$, $\chi_{\pi_0(\gamma)}^{-1}(Z(B,\varnothing,\gamma))$ is group-like and 
$\chi_{\pi_0(\gamma)}^{-1}(Z(B,\varnothing,\gamma))=\expd(\Lk(\gamma))\sqcup H$ for some substantial and group-like $H$ \cite[Lemma 3.17]{CHM}. 
We generalize this in the next result. 
\begin{lemma} \label{lemmagrouplikeZ}
 For any bottom-top $q$--tangle with paths $(B,K,\gamma)$ where $B$ is a $\Q$--cube, $\chi_{\pi_0(\gamma)}^{-1}(Z(B,K,\gamma))$ is group-like
 and:
 $$\chi_{\pi_0(\gamma)}^{-1}(Z(B,K,\gamma))=\expd(W_\gamma)\sqcup H\quad\in\tA_{\Qt}(*_{\pi_0(\gamma)}),$$ for some substantial and group-like $H$. 
\end{lemma}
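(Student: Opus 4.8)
The plan is to work from a surgery presentation $(([-1,1]^3,\Xi,\eta),L)$ of $(B,K,\gamma)$, so that by the previous proposition $Z(B,K,\gamma)=U_+^{-\sigma_+(L)}\sqcup U_-^{-\sigma_-(L)}\sqcup\int_{\pi_0(L)}\Zc((\Xi,\eta),L)$; since $\chi_{\pi_0(\gamma)}^{-1}(Z(B,K,\gamma))$ and $W_\gamma$ are both invariants of $(B,K,\gamma)$, it is enough to prove the identity for this presentation. First I would settle group-likeness. By Lemma~\ref{lemmawdmatrix}, $\overline{\Zc((\Xi,\eta),L)}$ is a group-like non-degenerate Gaussian $\expd(\frac12 W_L)\sqcup H_0$, and $H_0$ is group-like because $\expd(\frac12 W_L)$ is (struts are primitive) and the coproduct is multiplicative for the disjoint union. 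Hence $\int_{\pi_0(L)}\overline{\Zc((\Xi,\eta),L)}=\langle\,\expd(-\frac12 W_L^{-1}),H_0\,\rangle_{\pi_0(L)}$ is group-like by Theorem~\ref{thJMM}, the skeleton $\eta$ of a bottom-top tangle being a disjoint union of intervals. As $U_\pm$ are group-like, $Z(B,K,\gamma)$, hence $\chi_{\pi_0(\gamma)}^{-1}(Z(B,K,\gamma))$, is group-like; being a group-like element of such a graded Hopf algebra it is the $\expd$ of a series of connected diagrams, so it remains only to identify its i--degree $0$ part, i.e.\ its struts.

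For that I would rerun the strut computation from the proof of Lemma~\ref{lemmawdmatrix}. Since $\nu$ and $\Phi$ have no term of i--degree at most $2$, every strut of $\chi_{\pi_0(\eta)}^{-1}(\overline{\Zc((\Xi,\eta),L)})$ is still the contribution of a single crossing, and the same bead bookkeeping gives its full strut part as $\expd\big(\frac12 W_{\eta\cup L}\big)$, where $W_{\eta\cup L}$ is the winding matrix of the chosen diagram of $\eta\cup L$, with blocks $W_\eta,\,W_{\eta L},\,W_{L\eta},\,W_L$. I would then carry out the formal Gaussian integration $\int_{\pi_0(L)}=\langle\,\expd(-\frac12 W_L^{-1}),\,\cdot\,\rangle_{\pi_0(L)}$: by the i--degree grading of the bracket, its only i--degree $0$ contributions to the result are the $W_\eta$--struts, which carry only $\pi_0(\gamma)$--legs and are left untouched, and the pairs of $W_{\eta L}$--struts glued along a $W_L^{-1}$--strut; this is the usual completion of the square and yields as strut part the diagram attached to the Schur complement $W_\eta-W_{\eta L}W_L^{-1}W_{L\eta}$, which is $W_\gamma$ by its definition in Section~\ref{secwinding}. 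The normalization factors $U_\pm^{-\sigma_\pm(L)}$ carry no univalent vertex, so they leave the strut part unchanged; and $\chi_{\pi_0(\gamma)}^{-1}$ turns the skeleton chords of $Z(B,K,\gamma)\in\tAw_{\Qt}(\gamma)$ into $\pi_0(\gamma)$--colored struts with exactly the normalization of the path-free case \cite[Lemma~3.17]{CHM}, with $W_\gamma$ playing the role of $\Lk(\gamma)$, so that the strut part of $\chi_{\pi_0(\gamma)}^{-1}(Z(B,K,\gamma))$ is $\expd(W_\gamma)$. Setting $H=\expd(-W_\gamma)\sqcup\chi_{\pi_0(\gamma)}^{-1}(Z(B,K,\gamma))$, substantial by construction and group-like as a product of group-like elements, then gives the claim.

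The delicate point will be the last step. One has to check that the pairings in the Gaussian integral create nothing of i--degree $0$ beyond the Schur complement term, which follows from the i--degree grading of $\langle\,\cdot\,,\cdot\,\rangle_{\pi_0(L)}$; and one has to keep track of the normalization that $Z$ carries on bottom-top tangles (the passage between skeleton chords and colored struts, as fixed in \cite{CHM}), which is what turns the \emph{a priori} coefficients into $W_\gamma$ itself. A less computational alternative would be to use Proposition~\ref{propintertopo}, by which $W_\gamma=\Lk_e(\tilde\gamma)$: the strut part of a Kontsevich-type invariant of a bottom-top tangle with paths can be read off the equivariant linking numbers of its components, which gives the same identity without the Gaussian-integration combinatorics, at the price of relying on the topological interpretation of the winding matrix rather than on the surgical definition of $Z$.
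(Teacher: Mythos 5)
Your proposal is correct and follows essentially the same route as the paper's proof: group-likeness is obtained from Lemma~\ref{lemmawdmatrix}, Theorem~\ref{thJMM} and the compatibility of $\chi$ with the coproduct, and the strut part is computed as in Lemma~\ref{lemmawdmatrix} and then reduced by the formal Gaussian integration to the Schur complement $W_\eta-W_{\eta L}W_L^{-1}W_{L\eta}=W_\gamma$. The closing alternative via Proposition~\ref{propintertopo} is only a side remark and is not needed.
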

\begin{proof}
 The fact that $Z(B,K,\gamma)$ is group-like follows from the same property for $U_+$, $U_-$ and $\overline{\Zc((\Xi,\eta),L)}$, and Theorem \ref{thJMM}. 
 It implies that $\chi_{\pi_0(\gamma)}^{-1}(Z(B,K,\gamma))$ is group-like since $\chi_{\pi_0(\gamma)}^{\phantom{-1}}$ preserves the coproduct.
 The same computation as in the proof of Lemma \ref{lemmawdmatrix} gives:
 $$\chi_{\pi_0(\eta)}^{-1}\left(\overline{\Zc((\Xi,\eta),L)}\right)=\expd\left(\frac{1}{2}W_L+\frac{1}{2}W_{\eta}+W_{L\eta}\right)\sqcup H',$$
 where $H'$ is substantial ---note that $\chi_{\pi_0(\gamma)}$ and $\chi_{\pi_0(\eta)}$ are essentially the same before and after surgery on $L$. Integrate along $\pi_0(L)$:
 \begin{eqnarray*}
 \chi_{\pi_0(\gamma)}^{-1}\left(\int_{\pi_0(L)}\overline{\Zc((\Xi,\eta),L)}\right)&&\\
 &\hspace{-6cm}=&\hspace{-3cm}
 \left\langle\expd\left(-\frac{1}{2}W_L^{-1}\right),\expd\left(\frac{1}{2}W_{\eta}+W_{L\eta}\right)\sqcup H'\right\rangle_{\pi_0(L)}\\
 &\hspace{-6cm}=&\hspace{-3cm}\expd\left(\frac{1}{2}W_{\eta}-\frac{1}{2}{}^tW_{L\eta}(t^{-1})W_L^{-1}W_{L\eta}\right)\sqcup H\\
 &\hspace{-6cm}=&\hspace{-3cm}\expd\left(\frac{1}{2}W_{\gamma}\right)\sqcup H.
 \end{eqnarray*}
\end{proof}

    \subsection{The functor $\tZ:\tlcob_q\to\tAts$}

In this section, we define a functor on Lagrangian $q$--cobordisms with paths by applying the invariant $Z$ on bottom-top $q$--tangles with paths in $\Q$--cubes. 
The invariant $Z$ is functorial on $q$--tangles but not on bottom-top $q$--tangles, due to the different composition laws. To deal with this, we introduce 
some specific elements $\T_g\in\tAtsm{g}{g}$ following \cite[Sec. 4]{CHM}. 
Set:
$$\lambda(x,y;r)=\chi_{\{r\}}^{-1}\left(\left(\sum_{n\geq0}\frac 1{n!}\raisebox{-6ex}{
\begin{tikzpicture} [scale=0.35]
 \draw[->] (0,0) node[below] {$\scriptstyle{r}$} -- (0,5);
 \foreach \y in {1,3,4} {
 \draw[dashed] (0,\y) -- (2,\y) node[right] {$\scriptstyle{x}$};}
 \draw (1,2) node {$\vdots$};
 \draw (3.8,2.5) node {$\left.\resizebox{0cm}{0.75cm}{\phantom{rien}}\right\rbrace \scriptstyle{n}$};
\end{tikzpicture}}\right)\circ\left(\sum_{n\geq0}\frac 1{n!}\raisebox{-6ex}{
\begin{tikzpicture} [scale=0.35]
 \draw[->] (0,0) node[below] {$\scriptstyle{r}$} -- (0,5);
 \foreach \y in {1,3,4} {
 \draw[dashed] (0,\y) -- (2,\y) node[right] {$\scriptstyle{y}$};}
 \draw (1,2) node {$\vdots$};
 \draw (3.8,2.5) node {$\left.\resizebox{0cm}{0.75cm}{\phantom{rien}}\right\rbrace \scriptstyle{n}$};
\end{tikzpicture}}\right)\right)
\in\tA_{\Qt}(*_{\{x,y,r\}}),$$
$$\T(x^+,x^-)=U_+^{-1}\sqcup U_-^{-1}\sqcup\int_{\{r^+,r^-\}}\langle\lambda(1^-,x^-;r^-)\sqcup\lambda(x^+,1^+;r^+),\chi^{-1}(Z(T_1))\rangle_{\{1^+,1^-\}},$$
$$\T_g=\T(1^+,1^-)\sqcup\dots\sqcup\T(g^+,g^-)\in\tA_{\Qt}(\pgf{g}{g}),$$
where the bottom-top tangle $T_1$ is drawn in Figure \ref{figTg}.
As proven in \cite[Lemma 4.9]{CHM}:
\begin{lemma} \label{lemmagrouplikeTg}
 $\T_g$ is a group-like element of $\tA_{\Qt}(\pgf{g}{g})$ and $\T_g=Id_g\sqcup H$ for some substantial and group-like $H$. In particular, $\T_g$ is top-substantial 
 and $\lfloor g\rceil^-$--substantial.
\end{lemma}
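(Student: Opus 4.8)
The plan is to reduce the statement for $\T_g = \T(1^+,1^-) \sqcup \dots \sqcup \T(g^+,g^-)$ to the single-handle case $\T(x^+,x^-)$, since the group-like property and the top-substantiality/$\lfloor g\rceil^-$-substantiality conditions are preserved under disjoint union, and $Id_g = \bigsqcup_i \expd(\raisebox{-1ex}{\begin{tikzpicture}[scale=0.2]\draw[dashed] (0,0)--(0,1);\end{tikzpicture}}_{i^-\to i^+})$ is itself a disjoint union over the handles. So it suffices to prove that $\T(x^+,x^-)$ is group-like and equals $\expd(\raisebox{-1ex}{\begin{tikzpicture}[scale=0.2]\draw[dashed] (0,0)--(0,1);\end{tikzpicture}}_{x^-\to x^+}) \sqcup H$ for some substantial, group-like $H$, with no $x^+$--struts and no $x^+x^+$ or $x^-x^-$ struts at all in the strut part.

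First I would establish group-likeness. By Lemma~\ref{lemmagrouplikeZ} (or rather the version for $Z(T_1)$, a bottom-top $q$-tangle with no path in $[-1,1]^3$), $\chi^{-1}(Z(T_1))$ is group-like; the elements $\lambda(1^-,x^-;r^-)$ and $\lambda(x^+,1^+;r^+)$ are visibly exponentials of single struts and hence group-like; $U_\pm$ are group-like. Since the bracket $\langle\cdot,\cdot\rangle_{\{1^+,1^-\}}$ and formal Gaussian integration $\int_{\{r^+,r^-\}}$ both preserve group-likeness (Theorem~\ref{thJMM} and the fact that $\int$ is defined via a bracket against a group-like Gaussian, together with Lemma~\ref{lemmagrouplikeZ}-style reasoning), and disjoint union of group-like elements is group-like, $\T(x^+,x^-)$ is group-like. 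One must check that the Gaussian integration here is legitimate, i.e. that the relevant quadratic form in the $\{r^+,r^-\}$ variables is non-degenerate; this follows exactly as in the corresponding step of \cite{CHM}, since $Z(T_1)$ has the same strut structure as its integral-homology-cube counterpart.

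Next I would identify the strut part. Because $\T(x^+,x^-)$ is group-like, it is the $\expd$ of a series of connected diagrams, so it is determined by which struts appear. The strut part comes from tracking the $\{x^+, x^-, r^+, r^-, 1^+, 1^-\}$--struts through the bracket and the integration. The key inputs are: (i) $\chi^{-1}(Z(T_1)) = \expd(\Lk(T_1)) \sqcup (\text{substantial})$ with the linking matrix of $T_1$ giving exactly the pattern of one strut joining each $r^+$-type leg to its partner (the combinatorics of Figure~\ref{figTg}); (ii) $\lambda(x^+,1^+;r^+)$ contributes struts pairing $x^+$ with $r^+$ and $1^+$ with $r^+$, and symmetrically for the minus side; (iii) bracketing along $\{1^+,1^-\}$ and then integrating along $\{r^+,r^-\}$ composes these linear "propagators" — the upshot being a single strut from $x^-$ to $x^+$, with no surviving $x^+x^+$, $x^-x^-$, or $x^+$-only struts. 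This is the content of \cite[Lemma 4.9]{CHM} in the non-beaded case; the only new feature here is that edges carry beads in $\Qt$, but since $T_1$ has trivial paths all beads on the relevant struts are $t^0 = 1$, so the beaded computation reduces to the non-beaded one. I would spell out the strut bookkeeping carefully and conclude $\T(x^+,x^-) = \expd(\raisebox{-1ex}{\begin{tikzpicture}[scale=0.2]\draw[dashed] (0,0)--(0,1);\end{tikzpicture}}_{x^-\to x^+}) \sqcup H$ with $H$ substantial; group-likeness of $H$ then follows since $H = \T(x^+,x^-) \sqcup \expd(-\,\raisebox{-1ex}{\begin{tikzpicture}[scale=0.2]\draw[dashed] (0,0)--(0,1);\end{tikzpicture}}_{x^-\to x^+})$ is a product of group-like elements. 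Taking disjoint union over the $g$ handles gives $\T_g = Id_g \sqcup H$, and the absence of $x^+$-struts and $x^+x^-$-pairings other than the $Id_g$ strut shows $\T_g$ is top-substantial and $\lfloor g\rceil^-$-substantial.

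The main obstacle will be step two: carefully carrying out the strut/propagator computation through the composition of the bracket along $\{1^+,1^-\}$ and the formal Gaussian integral along $\{r^+,r^-\}$, keeping track of orientations and of the bead labels (even though they are trivial here, one must verify this) and confirming that no spurious $x^+x^+$ or $x^-x^-$ struts are generated. Fortunately this is almost verbatim the argument of \cite[Lemma 4.9]{CHM}, so I would present it as a reduction to that lemma, noting explicitly that the trivial-path hypothesis on $T_1$ makes all relevant beads equal to $1$ and hence makes the beaded diagram algebra computation identical to the classical one.
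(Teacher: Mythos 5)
Your proposal is correct and matches the paper's approach: the paper proves this lemma simply by citing \cite[Lemma 4.9]{CHM}, and your argument is a faithful unpacking of that proof together with the (correct) observation that, since $T_1$ carries no paths, all beads involved are trivial and the beaded computation reduces verbatim to the one in \cite{CHM}.
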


Let $(M,K)$ be a Lagrangian $q$--cobordism with paths and denote $(B,K,\gamma)$ the associated bottom-top $q$--tangle with paths, of type $(g,f)$. 
We have $Z(B,K,\gamma)\in\tAw_{\Qt}(\gamma)\cong\tA_{\Qt}(\gamma)$ and we consider $\chi^{-1}(Z(B,K,\gamma))\in\tA_{\Qt}(\pgf{g}{f})$. 
It may not be top-substan\-tial, but since $\T_g$ is $\lfloor g\rceil^-$--substantial, we can set:
$$\tZ(M,K)=\chi^{-1}(Z(B,K,\gamma))\circ\T_g.$$
At the level of objects, $\tZ$ sends a word on its number of letters. 
Direct adaptation of the proof of \cite[Lemma 4.10]{CHM} implies that $\tZ$ preserves the composition and the next result follows, see \cite[Theorem 4.13]{CHM}. 

\begin{proposition}
 $\tZ:\tlcob_q\to\tAts$ is a functor which preserves the tensor product on $\lcob_q\otimes\tlcob_q$.
\end{proposition}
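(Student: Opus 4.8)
The statement asserts that $\tZ:\tlcob_q\to\tAts$ is a well-defined functor preserving the tensor product on the sub-bifunctor $\lcob_q\otimes\tlcob_q$. The strategy is to verify the three ingredients separately: (i) $\tZ(M,K)$ lands in the right space, i.e.\ $\chi^{-1}(Z(B,K,\gamma))\circ\T_g$ is a top-substantial element of $\tA_{\Qt}(\pgf{g}{f})$; (ii) $\tZ$ respects composition; (iii) $\tZ$ respects the tensor product. For (i), recall that $\T_g$ is $\lfloor g\rceil^-$-substantial by Lemma~\ref{lemmagrouplikeTg}, so the composition $D\circ\T_g$ is defined for arbitrary $D\in\tA_{\Qt}(\pgf{g}{f})$ (no $\lfloor g\rceil^+$-struts are needed in $D$), and the result is automatically $\lfloor g\rceil^+$-substantial because every $i^+$-labelled vertex of $D$ gets glued to an $i^+$-labelled vertex of $\T_g$ of which there are none left unglued in a strut — more precisely, the identity part $Id_g$ of $\T_g$ carries each $i^+$ vertex to an $i^-$ vertex, and the substantial part $H$ has no $\lfloor g\rceil^+$-struts, so no $\lfloor g\rceil^+$-strut survives in the composite. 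This gives membership in $\tAts(g,f)$.

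**Functoriality.** The heart of the proof is step (ii), and this is where I expect the main obstacle. The subtlety is exactly the one flagged in the text: $Z$ is functorial on the category $\ttcub$ of $q$-tangles with paths in $\Q$-cubes, but the composition in $\tbtt_q$ of bottom-top $q$-tangles is \emph{not} the $\ttcub$-composition — it involves first composing in $\tcob$ through the connecting tangle $T_g$ of Figure~\ref{figTg} and then performing surgery on $\gamma^+\cup T_g\cup\upsilon^-$. The element $\T_g$ is precisely the diagrammatic avatar of this bookkeeping: applying $Z$ to the surgery presentation of the composite bottom-top tangle, one picks up a factor corresponding to $Z(T_1)^{\sqcup g}$ together with the gluing maps $\lambda(x^{\pm},1^{\pm};r^{\pm})$ that reparametrize top into bottom, and formal Gaussian integration along the components of $T_g$ produces exactly $\T_g$. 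Thus the plan is to follow the proof of \cite[Lemma~4.10]{CHM} line by line, checking that the beaded/winding refinements go through: the extra data is the bead $t^{\pm1}$ assigned by $\Zp$ whenever a skeleton strand meets a disk, and the point is that in the surgery presentation of a composite the disks of the two factors stay in their respective ``floors,'' so the winding contributions add up in the expected block form. Concretely, using Lemma~\ref{lemmagrouplikeZ} one writes $\chi^{-1}(Z(B,K,\gamma))=\expd(W_\gamma)\sqcup H_\gamma$ and reduces the composition law to a statement about winding matrices — that $W_{\gamma\circ\upsilon}$ is obtained from $W_\gamma$, $W_\upsilon$ and the $T_g$-block by a Gaussian-integration/Schur-complement formula — which is the beaded analogue of the linking-matrix computation in \cite{CHM} and follows from the behaviour of $W_\gamma$ under surgery already encoded in the formula $W_\gamma=W_\eta-W_{\eta L}W_L^{-1}W_{L\eta}$ and Proposition~\ref{propintertopo}. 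The associativity and identity axioms then reduce to the corresponding identities for $\T_g$, namely that $\T_g$ behaves as a ``coherence element'' — this too is \cite[Lemma~4.10]{CHM} adapted.

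**Tensor product and conclusion.** Step (iii) is comparatively routine: the tensor product on $\tbtt_q$ is horizontal juxtaposition in the $x$-direction (Proposition~\ref{propmapD}), the functor $Z$ preserves the tensor product on $\tcub\otimes\ttcub$ as shown in the previous subsection, $\chi^{-1}$ is visibly multiplicative for disjoint-union juxtaposition of skeleton components, and $\T_{g+g'}=\T_g\sqcup\T_{g'}$ directly from its definition as a disjoint union of $\T(i^+,i^-)$'s. Hence $\tZ((M,K)\otimes(M',K'))=\tZ(M,K)\sqcup\tZ(M',K')$ in $\tAts$, and because the subcategory $\lcob_q$ of cobordisms with no path already carries the CHM functor, the restriction to $\lcob_q\otimes\tlcob_q$ recovers the known tensor-preservation there. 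Assembling (i)–(iii) gives the proposition. The one place demanding genuine care — and the expected main obstacle — is verifying that the winding/beading data interacts correctly with the surgery in the composition law, i.e.\ that no bead gets ``miscounted'' when strands of $\gamma^+$, $T_g$ and $\upsilon^-$ are fused and then integrated out; this is controlled by Lemma~\ref{lemmawdmatrix}, Lemma~\ref{lemmawdrel} (invariance of the bracket under winding relations) and Lemma~\ref{lemmaFGI}, so no new phenomenon arises beyond what is already available in the excerpt.
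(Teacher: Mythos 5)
Your overall route is the same as the paper's: the paper disposes of this proposition in one sentence, asserting that functoriality follows by a direct adaptation of \cite[Lemma 4.10]{CHM} and \cite[Theorem 4.13]{CHM}, and your steps (ii) and (iii) — tracking the beads through the surgery presentation of a composite, reducing the composition law to a Schur-complement identity for winding matrices via Lemmas \ref{lemmawdrel}, \ref{lemmaFGI} and \ref{lemmagrouplikeZ}, and getting the tensor product from $\T_{g+g'}=\T_g\sqcup\T_{g'}$ and the tensor-preservation of $Z$ — are exactly the checks that adaptation requires.

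There is, however, a genuine flaw in your step (i). You claim that $\chi^{-1}(Z(B,K,\gamma))\circ\T_g$ is automatically $\lfloor g\rceil^+$--substantial because of the structure of $\T_g$. This is false: writing $\T_g=Id_g\sqcup H$ with $H$ substantial (Lemma \ref{lemmagrouplikeTg}), the only $\lfloor g\rceil^-$--vertices of $\T_g$ lying on struts are those of the identity struts $i^+\!-\!i^-$, so gluing both ends of an $i^+\!-\!j^+$ strut of $\chi^{-1}(Z(B,K,\gamma))$ to two such identity struts reproduces an $i^+\!-\!j^+$ strut in the composite. (Also note the composition glues $i^+$--vertices of the first factor to $i^-$--vertices of $\T_g$, not to $i^+$--vertices as you first wrote.) Composition with $\T_g$ therefore does not kill top struts; what is needed is that there are none to kill, i.e.\ that the $\lfloor g\rceil^+\times\lfloor g\rceil^+$ block of the winding matrix $W_\gamma$ vanishes. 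By Lemma \ref{lemmagrouplikeZ} the strut part of $\chi^{-1}(Z(B,K,\gamma))$ is governed by $W_\gamma$, and the Lagrangian condition only gives the vanishing of the ordinary linking matrix $\Lk(\gamma^+)$, which is the evaluation of $W_{\gamma^+}$ at $t=1$; the vanishing of the full equivariant block $W_{\gamma^+}=\Lk_e(\tilde{\gamma}^+)$ is a strictly stronger statement that your argument neither states nor proves. This is precisely the point where the beaded setting differs from \cite{CHM}, so it cannot be waved away as ``automatic''.
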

Restricting the functor $\tZ:\tlcob_q\to\tAts$ to Lagrangian $q$--cobordisms with no path, one recovers the functor $\tZ$ defined on $\lcob_q$ in \cite[Theorem 4.13]{CHM}. 

Lemmas \ref{lemmagrouplikeZ} and \ref{lemmagrouplikeTg} imply:
\begin{lemma}
 Let $(M,K)$ be a Lagrangian $q$--cobordism with paths and let $(B,K,\gamma)$ be the associated bottom-top $q$--tangle with paths. 
 Then $\tZ(M,K)$ is group-like and $\tZ(M,K)=\expd(W_\gamma)\sqcup H$ for some substantial and group-like $H$.
\end{lemma}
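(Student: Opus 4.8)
The plan is to read the statement off from the two quoted lemmas with essentially no further work. Recall that $\tZ(M,K)=\chi^{-1}(Z(B,K,\gamma))\circ\T_g$ in $\tAts$, where $(B,K,\gamma)$ is the bottom-top $q$--tangle with paths associated with $(M,K)$, of type $(g,f)$, and that this composition is defined because $\T_g$ is $\lfloor g\rceil^-$--substantial (Lemma~\ref{lemmagrouplikeTg}).

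\emph{Group-likeness.} By Lemma~\ref{lemmagrouplikeZ}, $\chi^{-1}(Z(B,K,\gamma))$ is group-like; by Lemma~\ref{lemmagrouplikeTg}, $\T_g$ is group-like. The composition $D\circ E$ in $\tAts$ is an instance of the bracketting $\langle\cdot,\cdot\rangle_S$ (it glues the $i^+$--labelled vertices of $D$ with the $i^-$--labelled ones of $E$, once the paired label sets are identified), so Theorem~\ref{thJMM} in the case $X=\varnothing$ shows that the composition of two group-like elements of $\tAts$ is again group-like. Hence $\tZ(M,K)$ is group-like, that is, the exponential of a series of connected $\Qt$--beaded diagrams. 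Separating the struts (the connected diagrams with no trivalent vertex) from the rest, we may write $\tZ(M,K)=\expd(S)\sqcup H$ with $S$ a combination of struts and $H$ substantial and group-like.

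\emph{Identification of $S$.} Using Lemmas~\ref{lemmagrouplikeZ} and~\ref{lemmagrouplikeTg} once more, write $\chi^{-1}(Z(B,K,\gamma))=\expd(W_\gamma)\sqcup H_1$ and $\T_g=Id_g\sqcup H_2=\expd\!\big(\textstyle\sum_{i=1}^{g}[i^-\!\to i^+]\big)\sqcup H_2$ with $H_1,H_2$ substantial. Since $H_1$ and $H_2$ have no strut, a connected component of $\chi^{-1}(Z(B,K,\gamma))\circ\T_g$ without trivalent vertex can only be produced by gluing struts of $\expd(W_\gamma)$ with struts of $Id_g$ along the $\lfloor g\rceil^+$--labelled vertices; as $Id_g$ is the identity for the composition and its struts carry trivial labels, this contributes precisely $\expd(W_\gamma)$. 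Hence $S=W_\gamma$ and $\tZ(M,K)=\expd(W_\gamma)\sqcup H$.

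\emph{Main point.} I expect no real obstacle: the strut bookkeeping is exactly of the kind already done in the proofs of Lemmas~\ref{lemmawdmatrix} and~\ref{lemmagrouplikeZ}, and the only statement deserving a moment's care is that composition preserves group-likeness, which I would settle by the reduction to Theorem~\ref{thJMM} indicated above.
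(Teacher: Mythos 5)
Your proposal is correct and follows exactly the paper's (unwritten) argument: the paper simply states that this lemma is implied by the two cited lemmas, and your write-up supplies precisely the missing details — group-likeness of the composite via Theorem~\ref{thJMM} applied to the bracketting, and identification of the strut part using that $H_1$, $H_2$ are substantial and that the $Id_g$ struts carry trivial labels.
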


    \subsection{Application to $\Q$SK--pairs}

Let $(S,\kappa)$ be a $\Q$SK--pair. Let $M$ be the $\Q$--cube obtained from $S$ by removing the interior of a ball $B^3$ disjoint from $\kappa$. 
Isotoping $\kappa$ in $M$ and fixing a boundary parametrization $m$ of $M$, one can view $\kappa$ as the knot $\bK$ associated with a Lagrangian cobordism 
with one path $(M,K)$. Since the top and bottom words are empty, we get a Lagrangian $q$--cobordism with one path. 
\begin{proposition} \label{propinvariantQSK}
 Let $(S,\kappa)$ be a $\Q$SK--pair. Define as above an associated Lagrangian $q$--cobordism with one path $(M,K)$. Then $\tZ(S,\kappa)=\tZ(M,K)$ defines an invariant 
 of $\Q$SK--pairs, which coincides with the Kricker invariant $Z^\mathrm{rat}$ for knots in $\Z$--spheres.
\end{proposition}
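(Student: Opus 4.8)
The plan is to prove two things: that the element $\tZ(M,K)$ built from $(S,\kappa)$ depends only on the homeomorphism type of $(S,\kappa)$, and that it specializes to $Z^{\mathrm{rat}}$ on $\Z$SK--pairs. For the first point, I would first check that $(M,K)$ genuinely is a Lagrangian $q$--cobordism with one path from $0$ to $0$: both Lagrangian conditions are vacuous here, since $F_0$ is a disk so $A_0=B_0=0$, the first condition then reads $H_1(M;\Q)=0$ (true because $M$ is a $\Q$--cube) and the second reads $0\subset 0$; and $\bK$ is a boundary knot because $[\bK]=0$ in $H_1(M;\Z)\cong H_1(S;\Z)$ (Mayer--Vietoris for $S=M\cup B^3$) as $\kappa$ is null-homologous. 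The substantive step is that the equivalence class of $(M,K)$ as a morphism of $\tlcob_q(0,0)$ is determined by $(S,\kappa)$. I would deduce this from three standard facts: any two balls embedded in $S$ and disjoint from $\kappa$ are ambient isotopic through an isotopy supported in the connected open manifold $S\setminus\kappa$ (disk theorem plus isotopy extension), so removing them produces homeomorphic pairs $(M,\kappa)$; a $\Q$--cube carries an essentially unique boundary parametrization relative to the disk region where the closure arcs of the path sit, by Alexander's trick, exactly as in the path-free case treated in \cite{CHM}; and any two isotopies carrying $\kappa$ into path position differ by an ambient isotopy of $M$ fixing $\partial K$, hence yield equivalent triples $(M,K,m)$. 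Granting these, well-definedness of $\tZ(S,\kappa):=\tZ(M,K)$ is automatic, since $\tZ$ is a functor on $\tlcob_q$ and so takes equal values on equivalent cobordisms with paths.

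For the comparison, let $S$ be a $\Z$--sphere. Since $g^{\pm}=0$, the associated bottom-top $q$--tangle with paths $(B,K,\gamma)$ (Proposition~\ref{propmapD}) has $\gamma=\varnothing$ and $B=M$ a $\Z$--cube, while $\T_0=1$ and $\chi_{\pi_0(\gamma)}=\mathrm{id}$; hence $\tZ(S,\kappa)=Z(B,K,\varnothing)\in\tA_{\Qt}(\varnothing)$. Fix a surgery presentation $(([-1,1]^3,\Xi,\varnothing),L)$ of $(B,K,\varnothing)$ as in Lemma~\ref{lemmapreschir}. Capping $[-1,1]^3$ off with a ball turns $\Xi$ into an unknot $\mathcal{O}\subset S^3$ and $L$ into a framed link disjoint from $\mathcal{O}$, null-homologous in $S^3\setminus\mathcal{O}$ (it is disjoint from a Seifert surface of $\bXi$, by the proof of Lemma~\ref{lemmapreschir}), with surgery on $L$ yielding $S$ and carrying $\mathcal{O}$ to $\kappa$: this is precisely the type of surgery presentation underlying Kricker's construction. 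Unwinding the definitions, $Z(B,K,\varnothing)=U_+^{-\sigma_+(L)}\sqcup U_-^{-\sigma_-(L)}\sqcup\int_{\pi_0(L)}\Zc((\Xi,\varnothing),L)$ with $\Zc((\Xi,\varnothing),L)=\chi_{\pi_0(L)}^{-1}\bigl(\nu^{\otimes\pi_0(L)}\sharp_{\pi_0(L)}\Zp(L,1)\bigr)$, where $\Zp(L,1)$ is the Kontsevich integral of $L$ beaded by $t^{\pm1}$ at each crossing of a strand with the disk $d_1$, the role played in \cite{Kri} by a Seifert surface of $\kappa$. Term by term --- the $\nu$-- and $U_{\pm}$--normalizations, the beading, the PBW isomorphism $\chi^{-1}$, and the beaded formal Gaussian integration of \cite{AA2} as adapted in \cite{Kri,GK} --- this is exactly the recipe defining $Z^{\mathrm{rat}}(S,\kappa)$. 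Since both $Z$ (by the proposition preceding Lemma~\ref{lemmagrouplikeZ}) and $Z^{\mathrm{rat}}$ (by \cite{GK}) are independent of the surgery presentation, being unchanged under the Kirby moves KI and KII, comparing the two on a common presentation gives $\tZ(S,\kappa)=Z^{\mathrm{rat}}(S,\kappa)$.

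The main obstacle I expect is the bookkeeping in the comparison step: pinning down the conventions of \cite{Kri,GK} --- bead orientations, the signature and $\nu$--normalizations, the exact normalization of the beaded Aarhus integral, and the identification of the disk $d_1$ with Kricker's beading (Seifert) surface of $\kappa$ --- and checking that the functorial recipe reproduces them verbatim. The well-definedness part, by contrast, is essentially an application of uniqueness of ball embeddings and Alexander's trick and should be routine once stated carefully.
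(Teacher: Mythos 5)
Your overall architecture is right, and the second half (the comparison with $Z^{\mathrm{rat}}$) matches the paper, which disposes of it with ``coincidence is direct by construction''; your term-by-term unwinding of $\tZ(S,\kappa)=Z(B,K,\varnothing)$ through the $U_\pm$ and $\nu$ normalizations, the beading by the disk $d_1$, and the beaded formal Gaussian integral is exactly what is meant there, and the bookkeeping you flag is indeed all that remains on that side.

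The first half, however, buries the one genuinely non-trivial point of the proof inside your third ``standard fact''. The claim that any two isotopies carrying $\kappa$ into path position differ by an ambient isotopy of $M$ fixing the boundary \emph{is} the content of well-definedness, and it is not standard: an ambient isotopy of $M$ taking $\bK$ to $\bK'$ has no reason to fix the arc $\bK\cap\partial M$ pointwise, and when you comb it so that it does (dragging the trace of that arc back to its initial position through a collar), the combing sweeps the rest of the picture --- concretely, in a surgery presentation, strands of the surgery link $L$ --- around the path $K$. This is precisely the ``sweeping move'' of Figure~\ref{figsweepingmove}, and the paper's proof consists of identifying the residual choice with this move and then observing that the move is realized by an isotopy of the tangle with disks (the swept strands can be pulled around the other side of the path, where the disk $d_1$ does not extend), as in \cite[Lemma 3.26]{GK}. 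Your first two ``standard facts'' (uniqueness of the embedded ball up to ambient isotopy in the connected manifold $S\setminus\kappa$, and Alexander's trick for the boundary parametrization) are fine and genuinely routine, as is the verification that $(M,K)$ is a Lagrangian cobordism with one path; but the third fact needs the sweep-move argument, or an equivalent ``long knots versus closed knots'' argument adapted to a $\Q$--cube, and as written your proposal asserts it rather than proves it.
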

\begin{proof}
 When associating a cobordism with one path with a $\Q$SK--pair, we make a choice in the way we isotope the knot to the closure of a path. Once we work 
 with a surgery presentation of our cobordism, this choice corresponds to the sweeping move represented in Figure~\ref{figsweepingmove}. 
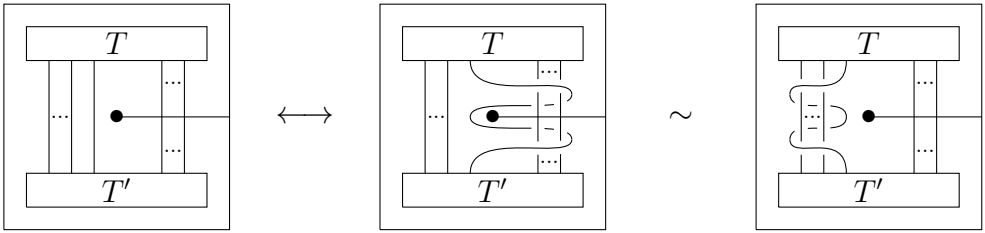
\begin{figure}[htb]
\begin{center}
\begin{tikzpicture} 
\begin{scope} [scale=0.3]
 \draw (0,0) -- (10,0) -- (10,10) -- (0,10) -- (0,0);
 \foreach \y in {1,7.5} {\draw (1,\y) -- (9,\y) -- (9,\y+1.5) -- (1,\y+1.5) -- (1,\y);}
 \draw (5,1.75) node {$T'$} (5,8.25) node {$T$};
 \foreach \x in {2,3,4,7,8} {\draw (\x,2.5) -- (\x,7.5);}
 \draw (2.5,5) node {$\scriptstyle{\dots}$} (7.5,3.5) node {$\scriptstyle{\dots}$} (7.5,6.5) node {$\scriptstyle{\dots}$};
 \draw (5,5) node {$\bullet$} -- (10,5);
\end{scope}
 \draw (4,1.5) node {$\longleftrightarrow$};
\begin{scope} [xshift=5cm,scale=0.3]
 \draw (0,0) -- (10,0) -- (10,10) -- (0,10) -- (0,0);
 \draw (8.5,4) .. controls +(0,1) and +(0,-1) .. (4,5) .. controls +(0,1) and +(0,-1) .. (8.5,6); 
 \foreach \x in {2,3,7,8} {\draww{(\x,2.5) -- (\x,7.5);}}
 \draww{(4,2.5) .. controls +(0,2) and +(0,-1) .. (8.5,4) (8.5,6) .. controls +(0,1) and +(0,-2) .. (4,7.5);}
 \draw (2.5,5) node {$\scriptstyle{\dots}$} (7.5,3) node {$\scriptstyle{\dots}$} (7.5,7) node {$\scriptstyle{\dots}$};
 \foreach \y in {1,7.5} {\draw (1,\y) -- (9,\y) -- (9,\y+1.5) -- (1,\y+1.5) -- (1,\y);}
 \draw (5,1.75) node {$T'$} (5,8.25) node {$T$};
 \draw (5,5) node {$\bullet$} -- (10,5);
\end{scope}
\draw (9,1.5) node {$\sim$};
\begin{scope} [xshift=10cm,scale=0.3]
 \draw (0,0) -- (10,0) -- (10,10) -- (0,10) -- (0,0);
 \draw (1.5,4) .. controls +(0,1) and +(0,-1) .. (4,5) .. controls +(0,1) and +(0,-1) .. (1.5,6);
 \foreach \x in {2,3,7,8} {\draww{(\x,2.5) -- (\x,7.5);}}
 \draww{(4,2.5) .. controls +(0,2) and +(0,-1) .. (1.5,4) (1.5,6) .. controls +(0,1) and +(0,-2) .. (4,7.5);}
 \foreach \y in {1,7.5} {\draw (1,\y) -- (9,\y) -- (9,\y+1.5) -- (1,\y+1.5) -- (1,\y);}
 \draw (5,1.75) node {$T'$} (5,8.25) node {$T$};
 \draw (2.5,5) node {$\scriptstyle{\dots}$} (7.5,3.5) node {$\scriptstyle{\dots}$} (7.5,6.5) node {$\scriptstyle{\dots}$};
 \draw (5,5) node {$\bullet$} -- (10,5);
\end{scope}
\end{tikzpicture}
\end{center} \caption{A sweeping move.} \label{figsweepingmove}
\end{figure}
 But the right hand side diagram of this figure shows this move is trivial ---as noted in \cite[Lemma 3.26]{GK}. 
 
 Coincidence with $Z^\mathrm{rat}$ is direct by construction.
\end{proof}
\begin{remark}
 The above proof does not work for a cobordism with more than one path, so we do not get an invariant of boundary links in $\Q$--spheres. One may obtain 
 such an invariant by quotienting out the target space by suitable relations, see \cite{GK} for a construction of this kind.
\end{remark}

\begin{proposition}
 Let $(S_1,\kappa_1)$ and $(S_2,\kappa_2)$ be $\Q$SK--pairs. The invariant $\tZ$ is given on their connected sum by:  
 $$\tZ((S_1,\kappa_1)\sharp(S_2,\kappa_2))=\tZ(S_1,\kappa_1)\sqcup\tZ(S_2,\kappa_2).$$
\end{proposition}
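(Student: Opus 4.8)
The plan is to descend to surgery presentations and use that the Kontsevich integral splits over split unions. By Proposition~\ref{propinvariantQSK} and the construction preceding it, each $(S_i,\kappa_i)$ gives a Lagrangian $q$--cobordism with one path of type $(0,0)$, hence a bottom-top $q$--tangle with one path $(B_i,K_i,\varnothing)$ in a $\Q$--cube (its $\gamma$--part is empty since $g^\pm=0$); as $\T_0$ is the empty diagram and $\pi_0(\gamma)=\varnothing$, the definition of $\tZ$ reduces to $\tZ(S_i,\kappa_i)=Z(B_i,K_i,\varnothing)$. So it suffices to exhibit a convenient surgery presentation of the connected sum and to follow it through the definition of $Z$.

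First I would construct a \emph{split} surgery presentation of the connected sum. Fix surgery presentations $(([-1,1]^3,\Xi_i,\varnothing),L_i)$ of $(B_i,K_i,\varnothing)$ as in Lemma~\ref{lemmapreschir}, with each $\Xi_i$ a single straight segment. Squeeze the first presentation into the slab $\{y\ge0\}$ and the second into $\{y\le0\}$ of $[-1,1]^3$ so that $\Xi_1$ and $\Xi_2$ concatenate across $\{y=0\}$ into the standard segment $\Xi=\{0\}\times[-1,1]\times\{0\}$, while $L:=L_1\sqcup L_2$ becomes split by $\{y=0\}$ (which meets $\Xi$ in the single point $(0,0,0)$) and each $L_i$ meets the disk $d_1$ only along a sub-disk on which it crosses $d_1$ exactly as it crossed the disk in the $i$--th original presentation. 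One then checks that $(([-1,1]^3,\Xi,\varnothing),L)$ is a surgery presentation of the connected-sum pair: the underlying manifold is the boundary connected sum $M_1\natural M_2\cong(S_1\sharp S_2)\setminus\mathrm{int}(B^3)$, a $\Q$--cube, and the path obtained by closing up the concatenated arc is isotopic to $\kappa_1\sharp\kappa_2$.

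Then I would run this presentation through the definition of $Z$. Using a diagram of $(L,1)$ adapted to the splitting, there are no crossings between $L_1$ and $L_2$ and the beads produced are precisely those of the two original presentations; by invariance of $\Zp$ and tensoriality of the Kontsevich integral on split unions (together with locality of the bead insertions $G_v$), $\Zp(L,1)=\Zp(L_1,1)\sqcup\Zp(L_2,1)$. As $\pi_0(L)=\pi_0(L_1)\sqcup\pi_0(L_2)$ are disjoint, the $\nu$--summing and $\chi_{\pi_0(L)}^{-1}$ factor componentwise, so $\Zc((\Xi,\varnothing),L)=\Zc((\Xi_1,\varnothing),L_1)\sqcup\Zc((\Xi_2,\varnothing),L_2)$ in the diagram space. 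By Lemma~\ref{lemmawdmatrix} the strut part is governed by the winding matrix, and splitness gives $W_L=W_{L_1}\oplus W_{L_2}$ (no mutual winding, since $L_1$ and $L_2$ have no crossings); hence $W_L^{-1}=W_{L_1}^{-1}\oplus W_{L_2}^{-1}$, $\sigma_\pm(L)=\sigma_\pm(L_1)+\sigma_\pm(L_2)$, and both the formal Gaussian integral $\int_{\pi_0(L)}$ and the normalization $U_+^{-\sigma_+(L)}\sqcup U_-^{-\sigma_-(L)}$ factor over the disjoint union. Combining, $Z(B_{1\sharp2},K_{1\sharp2},\varnothing)=Z(B_1,K_1,\varnothing)\sqcup Z(B_2,K_2,\varnothing)$, which is the claim.

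The main obstacle is the second step: checking rigorously that the split link $L_1\sqcup L_2$ together with the concatenated segment is indeed a surgery presentation of $(S_1\sharp S_2,\kappa_1\sharp\kappa_2)$ minus a ball --- that is, that boundary connected sum of the $\Q$--cubes $M_i$ realizes the connected sum of the ambient $\Q$--spheres and that the closed-up concatenated arc is $\kappa_1\sharp\kappa_2$ --- and, most delicately, arranging the single dual disk $d_1$ so that each $L_i$ meets it exactly as in the $i$--th original presentation; this is what makes the bead data, and therefore the winding matrix, decompose into two non-interacting blocks. Everything downstream (tensoriality of $\Zp$, componentwise behaviour of $\chi^{-1}$, factorization of formal Gaussian integration and of the $U_\pm$ normalization over disjoint colour sets) is then routine.
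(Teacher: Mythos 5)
Your proposal is correct and follows essentially the same route as the paper: the paper also realizes the connected sum by stacking the two one-path cobordisms in the $y$ direction, observes that the resulting surgery diagram can be drawn with $L_1$ and $L_2$ having no mutual crossings (its Figure ``Stacking diagrams''), and concludes from there. The only difference is one of exposition --- the paper compresses the downstream factorization (of $\Zp$, of $\chi^{-1}$, of the Gaussian integral and the $U_\pm$ normalization over the split colour sets) into the single sentence ``the result follows since there is no crossing between $L_1$ and $L_2$,'' which you spell out explicitly.
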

\begin{proof}
 As previously, associate Lagrangian $q$--cobordisms with one path $(M_1,K_1)$ and $(M_2,K_2)$ with $(S_1,\kappa_1)$ and $(S_2,\kappa_2)$ respectively. 
 Construct a Lagrangian $q$--cobordism with one path $(M,K)$ associated with $(S,\kappa)=(S_1,\kappa_1)\sharp(S_2,\kappa_2)$ by stacking $(M_1,K_1)$ and $(M_2,K_2)$ 
 together in the $y$ direction. Now $(M_1,K_1)$ and $(M_2,K_2)$ are obtained from the cube $[-1,1]^3$ 
 with one disk by surgery on links $L_1$ and $L_2$ respectively. We obtain a surgery diagram for $(M,K)$ by drawing $L_1$ ``in front'' of $L_2$, or equivalently 
 ``around'' $L_2$, see Figure \ref{figstack}. The result follows from this latter diagram since there is no crossing between $L_1$ and $L_2$.
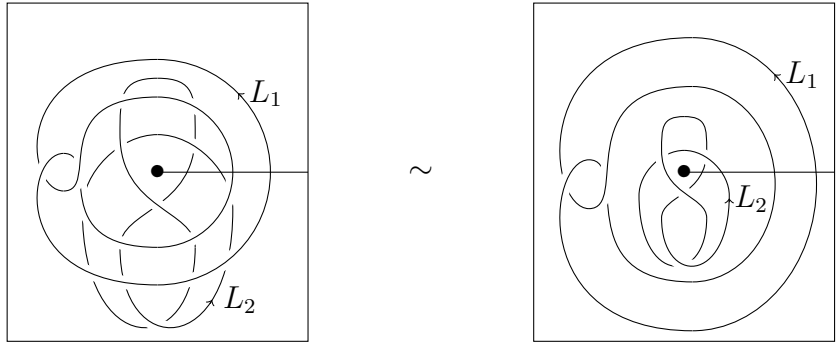
\begin{figure}[htb] 
\begin{center}
\begin{tikzpicture} [scale=0.5]
\begin{scope} 
 \draw (0,7.5) .. controls +(1,0) and +(0,1) .. (1,6) .. controls +(0,-2) and +(0,1) .. (-1,3);
 \draw (0,6) .. controls +(-1,0) and +(0,1) .. (-2,4) .. controls +(0,-4) and +(0,-3) .. (1,3);
 \draww{(0,7.5) .. controls +(-1,0) and +(0,1) .. (-1,6) .. controls +(0,-2) and +(0,1) .. (1,3);}
 \draww{(0,6) .. controls +(1,0) and +(0,1) .. (2,4) .. controls +(0,-4) and +(0,-3) .. (-1,3);}
 \draww{(0,3) arc (-90:90:2);}
 \draww{(0,2) arc (-90:90:3);}
 \draww{(0,8) .. controls  +(-4,0) and +(-1,0) .. (-2.5,4.5);}
 \draww{(0,3) .. controls  +(-3,0) and +(1,0) .. (-2.5,5.5);}
 \draww{(0,2) .. controls  +(-4,0) and +(-1,0) .. (-2.5,5.5);}
 \draww{(0,7) .. controls  +(-3,0) and +(1,0) .. (-2.5,4.5);}
 \draw (-4,0.5) rectangle (4,9.5);
 \draw (0,5) node {$\bullet$} -- (4,5);
 \draw[->] (2.24,7) -- (2.14,7.1) node[right] {$L_1$};
 \draw[->] (1.4,1.5) -- (1.45,1.6) node[right] {$L_2$};
\end{scope}
\draw (7,5) node {$\sim$};
\begin{scope} [xshift=14cm] 
\begin{scope} [scale=0.6,yshift=3.3cm] 
 \draw (0,7.5) .. controls +(1,0) and +(0,1) .. (1,6) .. controls +(0,-2) and +(0,1) .. (-1,3);
 \draw (0,6) .. controls +(-1,0) and +(0,1) .. (-2,4) .. controls +(0,-4) and +(0,-3) .. (1,3);
 \draww{(0,7.5) .. controls +(-1,0) and +(0,1) .. (-1,6) .. controls +(0,-2) and +(0,1) .. (1,3);}
 \draww{(0,6) .. controls +(1,0) and +(0,1) .. (2,4) .. controls +(0,-4) and +(0,-3) .. (-1,3);}
\end{scope}
\begin{scope} [xscale=1.1,yscale=1.3,xshift=0.2cm,yshift=-1.4cm] 
 \draww{(0,3) arc (-90:90:2);}
 \draww{(0,2) arc (-90:90:3);}
 \draww{(0,8) .. controls  +(-4,0) and +(-1,0) .. (-2.5,4.5);}
 \draww{(0,3) .. controls  +(-3,0) and +(1,0) .. (-2.5,5.5);}
 \draww{(0,2) .. controls  +(-4,0) and +(-1,0) .. (-2.5,5.5);}
 \draww{(0,7) .. controls  +(-3,0) and +(1,0) .. (-2.5,4.5);}
\end{scope}
 \draw (-4,0.5) rectangle (4,9.5);
 \draw (0,5) node {$\bullet$} -- (4,5);
 \draw[->] (2.5,7.5) -- (2.4,7.6) node[right] {$L_1$};
 \draw[->] (1.2,4.2) -- (1.2,4.3); \draw (1.05,4.3) node[right] {$L_2$};
\end{scope}
\end{tikzpicture}
\end{center}
\caption{Stacking diagrams.} \label{figstack}
\end{figure}
\end{proof}

      \section{Splitting formulas} \label{secformules}

We first mention useful lemmas, namely \cite[Lemma 4.3]{Mas} and \cite[Lemma 4.4]{Mas}. Recall the tensor $\mu(\chir)$ was defined in the introduction. 
\begin{lemma}[Massuyeau] \label{lemmaboundpara}
 For a $\Q$--handlebody $C$ of genus $g$, there exists a boundary parametrization $c:\partial C_0^g\to C$ such that $(C,c)\in\lcob(g,0)$.
\end{lemma}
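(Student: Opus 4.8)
The plan is to reduce the claim to the existence of one well-chosen boundary homeomorphism and then to construct it from the surjectivity of the homological representation of the surface mapping class group. Recall that $A_g$ is the $\Q$-span of the classes $[\alpha_1],\dots,[\alpha_g]$ in $H_1(F_g;\Q)\cong H_1(\partial C_0^g;\Q)$ and that, under this identification, $A_g$ is precisely the Lagrangian $\mathcal{L}_{C_0^g}$. I claim it suffices to produce an orientation-preserving homeomorphism $c\colon\partial C_0^g\to\partial C$ with $c_*(A_g)=\mathcal{L}_C$. Indeed $c_*$ is then a symplectic isomorphism on $H_1(\cdot;\Q)$, so $c_*(B_g)$ is a Lagrangian complementary to $c_*(A_g)=\mathcal{L}_C$. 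Since $C$ is a $\Q$-handlebody of genus $g$ we have $\dim_\Q H_1(C;\Q)=g$, and since $\mathcal{L}_C$ is Lagrangian the inclusion-induced map $H_1(\partial C;\Q)\to H_1(C;\Q)$ is onto with kernel $\mathcal{L}_C$; hence it restricts to an isomorphism on $c_*(B_g)$. With $m=c$ and $m_+$ the restriction of $c$ to $F_g$ followed by $\partial C\hookrightarrow C$, this is exactly the pair of conditions defining a Lagrangian cobordism in the case $g^+=g$, $g^-=0$: $(m_+)_*(A_g)=0$ (as $A_0=0$ and $\mathcal{L}_C$ dies in $H_1(C;\Q)$) and $H_1(C;\Q)=(m_+)_*(B_g)$.

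To build $c$, set $M=\mathcal{L}_C\cap H_1(\partial C;\Z)$. As $\mathcal{L}_C$ is a $\Q$-subspace, $M$ is a saturated rank-$g$ sublattice of the unimodular symplectic lattice $H_1(\partial C;\Z)$; being isotropic of half rank it is Lagrangian and extends to a symplectic $\Z$-basis $a_1,\dots,a_g,b_1,\dots,b_g$ (from $M^\perp=M$ one gets a perfect pairing between $M$ and $H_1(\partial C;\Z)/M$, so a dual family of the $a_i$ exists and can be corrected by elements of $M$ to be isotropic). Choose any orientation-preserving homeomorphism $\phi_0\colon\partial C_0^g\to\partial C$ (both are closed oriented genus-$g$ surfaces) and let $\Psi\colon H_1(\partial C_0^g;\Z)\to H_1(\partial C;\Z)$ be the symplectic isomorphism with $\Psi([\alpha_i])=a_i$, $\Psi([\beta_i])=b_i$. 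Then $(\phi_0)_*^{-1}\circ\Psi$ is a symplectic automorphism of $H_1(\partial C_0^g;\Z)$, hence equals $h_*$ for some orientation-preserving homeomorphism $h$ of $\partial C_0^g$ by the classical surjectivity of $\mathrm{MCG}(\partial C_0^g)\to\mathrm{Sp}(2g,\Z)$. Setting $c=\phi_0\circ h$ gives $c_*=\Psi$ on $H_1(\cdot;\Z)$, so $c_*(A_g\cap H_1(\partial C_0^g;\Z))=M$ and therefore $c_*(A_g)=M\otimes\Q=\mathcal{L}_C$, as required.

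The genuinely non-formal inputs are both in the second step: the lattice fact that a rational Lagrangian subspace of a symplectic $\Z$-lattice meets the lattice in a primitive Lagrangian sublattice which extends to a symplectic $\Z$-basis (equivalently, $\mathrm{Sp}(2g,\Z)$ acts transitively on rational Lagrangians), and the classical surjectivity of the symplectic representation of the mapping class group of a closed oriented surface. I expect the main (mild) obstacles to be verifying that the integral Lagrangian $M$ really extends to a symplectic $\Z$-basis — not merely a $\Q$-basis — and checking on the topological side that $\phi_0$ and $h$ can be taken orientation-preserving; everything else is bookkeeping with the definitions of $A_g$, $B_g$ and $\mathcal{L}_C$.
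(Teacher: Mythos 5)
Your proof is correct. The paper does not prove this lemma at all --- it is quoted from Massuyeau \cite[Lemma 4.3]{Mas} --- and your argument (reduce to finding an orientation-preserving homeomorphism carrying $A_g$ onto $\mathcal{L}_C$, extend the primitive Lagrangian sublattice $\mathcal{L}_C\cap H_1(\partial C;\Z)$ to a symplectic $\Z$--basis, and realize it via the surjection of the mapping class group onto $\mathrm{Sp}(2g,\Z)$) is essentially the standard one used there.
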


\begin{lemma}[Massuyeau]
 Let $\chir=\left(\frac{C'}{C}\right)$ be an LP--pair of genus $g$. Take boundary parametrizations $c:\partial C_0^g\to C$ and $c':\partial C_0^g\to C'$ 
 compatible with the fixed identification $\partial C\cong\partial C'$ such that $(C,c)\in\lcob(g,0)$ and $(C',c')\in\lcob(g,0)$. Then: 
 $$\mu(\chir)=\tZ_1(C,c)-\tZ_1(C',c'),$$ where $\tZ_1$ is the i--degree 1 part of $\tZ$ and $\mu(\chir)$ is considered as an element of $\tA_{\Qt}(*_{\lfloor g\rceil^+})$ 
 {\em via} the inclusion $\Lambda^3 H_1(\totC;\Q)\hookrightarrow\tA_{\Qt}(*_{\lfloor g\rceil^+})$ defined by:
 $$[c_+(\beta_i)]\wedge[c_+(\beta_j)]\wedge[c_+(\beta_k)]\mapsto\rat{$i^+$}{$j^+$}{$k^+$}.$$
\end{lemma}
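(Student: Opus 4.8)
The paths play no role in this statement, since $C$ and $C'$ carry no path: on both of them $\tZ$ reduces to the functor of \cite{CHM} on $\lcob_q$ and $\mu(\chir)$ is the classical triple--cup--product tensor, so the plan is to reproduce the argument of \cite[Lemma~4.4]{Mas} with the paths harmlessly along for the ride. Concretely I would (i) isolate the $i$--degree $1$ part of $\tZ(C,c)$, (ii) identify it with a ``relative triple cup product'' of $(C,c)$, and (iii) conclude by a Mayer--Vietoris additivity argument for cup products.

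For (i), let $(B,\gamma)$ be the bottom--top tangle of type $(g,0)$ associated with $(C,c)$. Since $D(B,\gamma)=(C,c)$ is Lagrangian, $B$ is a $\Q$--cube and $\Lk(\gamma)$ is trivial, so $W_\gamma=0$ and Lemma~\ref{lemmagrouplikeZ} gives that $\chi^{-1}(Z(B,\gamma))=\expd(0)\sqcup H=H$ is group--like and substantial; hence its $i$--degree $1$ part is a linear combination of the $Y$--shaped diagrams \rat{$i^+$}{$j^+$}{$k^+$}, i.e.\ an element of $\Lambda^3 H_1(C;\Q)$ under the embedding of the statement. By Lemma~\ref{lemmagrouplikeTg}, $\T_g=Id_g\sqcup H'$ with $H'$ substantial; splitting $\tZ(C,c)=\chi^{-1}(Z(B,\gamma))\circ\T_g$ by $i$--degree gives $\tZ_1(C,c)=\big[\chi^{-1}(Z(B,\gamma))\big]_1+\varnothing\circ[\T_g]_1$, where $\varnothing\circ[\T_g]_1$ (the part of $[\T_g]_1$ with all legs of type $+$) depends only on $g$. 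This correction is the same for $C$ and $C'$, so $\tZ_1(C,c)-\tZ_1(C',c')=\big[\chi^{-1}(Z(B,\gamma))\big]_1-\big[\chi^{-1}(Z(B',\gamma'))\big]_1$.

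For (ii), take a surgery presentation $(([-1,1]^3,\varnothing,\eta),L)$ of $(B,\gamma)$ and track the $i$--degree $1$ term through $Z=U_+^{-\sigma_+(L)}\sqcup U_-^{-\sigma_-(L)}\sqcup\int_{\pi_0(L)}\Zc((\varnothing,\eta),L)$: the factors $U_\pm$ and every strut are inert at $i$--degree $1$; the diagrams obtained by gluing the two $L$--legs of a single $Y$--graph to one strut of $\expd(-\tfrac12 W_L^{-1})$ vanish by AS (they carry a loop at a trivalent vertex); and what survives is exactly the degree--$1$, tree--level content of the Kontsevich/\AA rhus integral. By the computation underlying \cite[Lemma~4.4]{Mas} (see also \cite{CHM}) this equals, up to a universal sign and normalisation, the sum over triples of $\bar\mu_{ijk}(\bar\gamma)$ times the $Y$--graph on $i^+,j^+,k^+$, where $\bar\mu_{ijk}$ is Milnor's triple linking number of the closure $\bar\gamma$ (well defined since $\Lk(\bar\gamma)=0$). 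The classical identification of Milnor triple linking numbers with triple Massey/cup products then rewrites this in terms of $\langle c_+^*\omega_i\cup c_+^*\omega_j\cup c_+^*\omega_k,[\widehat C]\rangle$, with $\{\omega_i\}$ dual to $\{[c_+(\beta_i)]\}$ and $\widehat C$ the closed manifold obtained by capping $(C,c)$ off with the standard genus--$g$ handlebody (whose own triple cup product vanishes, so only the $C$--part contributes): this is the relative triple cup product of $(C,c)$.

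For (iii): because $h_*\mathcal{L}_C=\mathcal{L}_{C'}$, Mayer--Vietoris gives $H_1(\totC;\Q)\cong H_1(C;\Q)\cong H_1(C';\Q)$ via the inclusions, so the three tensors lie in the same $\Lambda^3\Q^g$; and the same Mayer--Vietoris argument for cup products applied to $\totC=(-C)\cup_h C'$ yields the cut--and--paste identity expressing $\mu(\chir)$ as the relative cup product of $(C,c)$ minus that of $(C',c')$. Together with (i) and (ii) this is precisely $\mu(\chir)=\tZ_1(C,c)-\tZ_1(C',c')$. The main obstacle is step (ii): pinning down, with all orientation conventions and normalisations correct (the factor $\tfrac12$, the sign of $\bar\mu$, the contributions of $\nu$ and $\Phi$), the identification of the $i$--degree $1$ part of the LMO functor with the relative triple cup product; this is the low--degree analysis already carried out in \cite{CHM,Mas}, and the cleanest route is to quote it rather than redo it. The AS bookkeeping of (i) and the homological additivity of (iii) are routine by comparison.
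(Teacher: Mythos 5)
The paper does not actually prove this lemma: it is quoted verbatim as \cite[Lemma 4.4]{Mas} (together with Lemma~\ref{lemmaboundpara} $=$ \cite[Lemma 4.3]{Mas}), so there is no in-paper argument to compare yours against. That said, your reconstruction is sound and follows the route one would expect from Massuyeau's paper. Your step (i) is correct and complete as stated: since $(C,c)\in\lcob(g,0)$ forces $\Lk(\gamma)=\Lk(\gamma^+)=0$, Lemma~\ref{lemmagrouplikeZ} gives a substantial group-like $\chi^{-1}(Z(B,\gamma))$ whose i--degree~$1$ part is a combination of $Y$--graphs, and the only other degree--$1$ contribution to $\chi^{-1}(Z(B,\gamma))\circ\T_g$ is the purely $+$--labelled degree--$1$ part of $\T_g$, which depends only on $g$ and cancels in the difference. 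Step (iii) is the standard cut-and-paste additivity of the triple cup product along a surface with matching Lagrangians; routine but sign-sensitive. The whole content is your step (ii), and you correctly identify it as such: the identification of the degree--$1$ tree part of the \AA rhus integral of the algebraically split link $\bar\gamma$ with Milnor's $\bar\mu_{ijk}$, and of those with the triple cup products of the capped-off manifold, is exactly the low-degree computation already available in \cite{CHM} and \cite{Mas}, and quoting it is no worse than what the present paper does. Two small cautions: your "vanishes by AS because of a loop at a trivalent vertex" argument is only automatic in the unbeaded setting --- which is the relevant one here since $C$ and $C'$ carry no path, as you note, but it would not survive verbatim in the beaded case; and the normalisations (the $\tfrac12$ in the Gaussian, the sign of $\bar\mu$, the absence of degree--$1$ contributions from $\nu$, $\Phi$ and $U_\pm$) must be checked against the specific conventions of \cite{CHM}, which you flag but do not carry out.
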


Let $(M,K)\in\tlcob_q(w,v)$. Let $\chir=(\chir_1,\dots,\chir_n)$ be a null LP--surgery on $(M,K)$. Let $e_i$ be the genus of $C_i$. 
For $1\leq i \leq n$, take boundary parametrizations $c_i:\partial C_0^{e_i}\to C_i$ and $c_i':\partial C_0^{e_i}\to C_i'$ compatible with the fixed identification 
$\partial C_i\cong\partial C_i'$ such that $(C_i,c_i)\in\lcob(e_i,0)$ and $(C_i',c_i')\in\lcob(e_i,0)$. Set $e=\sum_{i=1}^n e_i$. 
Take a collar neighborhood $m_-(F_f)\times[-1,\varepsilon-1]$ 
of the bottom surface $m_-(F_f)$. Take pairwise disjoint solid tubes $T_i$, $i=1,\dots,n$, such that $T_i$ connects $(c_i)_-(F_0)$ to a disk in 
$m_-(F_f)\times\{\varepsilon-1\}$ in the complement of the $C_j$'s, the collar neighborhood and $K$. This provides a decomposition of the cobordism $(M,K)$ as:
$$(M,K)=((C_1,\varnothing)\otimes\dots\otimes(C_n,\varnothing)\otimes Id_f)\circ(N,J),$$
where $f$ is the number of letters of $v$ (see Figure \ref{figdecompositionCob}). It is proved in \cite[Section 4.4]{Mas} that $N$ is a Lagrangian cobordism. The nullity condition on the surgery 
ensures that $\hat{J}$ is a boundary link. Thus $(N,J)$ is a Lagrangian cobordism with paths. 
\begin{figure}[htb]
\begin{center}
\begin{tikzpicture} [scale=0.25]
 \newcommand{\handleplus}[2]{
 \begin{scope} [xshift=#1cm,yshift=#2cm]
 \draw [white,line width=12pt] (11.5,2.5) arc (0:180:2);
 \draw[densely dashed,green] (11.5,2.5) circle (0.8 and 0.4) (7.5,2.5) circle (0.8 and 0.4);
 \draw (11.5,2) circle (0.5 and 0.25) (7.5,2) circle (0.5 and 0.25);
 \draw [white,line width=10pt] (11.5,2) arc (0:180:2);
 \draw[densely dashed] (11.5,2) circle (0.5 and 0.25) (7.5,2) circle (0.5 and 0.25);
 \draw (12,2) arc (0:180:2.5);
 \draw (11,2) arc (0:180:1.5);
 \draw[green] (12.3,2.5) .. controls +(0,3) and +(0,3) .. (6.7,2.5);
 \draw[green] (10.7,2.5) .. controls +(0,1) and +(0,1) .. (8.3,2.5);
 \draw[white, line width=2pt] (10.7,2.5) .. controls +(0,-0.5) and +(0,-0.5) .. (12.3,2.5) (6.7,2.5) .. controls +(0,-0.5) and +(0,-0.5) .. (8.3,2.5);
 \draw[green] (10.7,2.5) .. controls +(0,-0.5) and +(0,-0.5) .. (12.3,2.5) (6.7,2.5) .. controls +(0,-0.5) and +(0,-0.5) .. (8.3,2.5);
 \end{scope}}
 \newcommand{\handle}[2]{
 \begin{scope} [xshift=#1cm,yshift=#2cm]
 \draw (11.5,2) circle (0.5 and 0.25) (7.5,2) circle (0.5 and 0.25);
 \draw [white,line width=10pt] (11.5,2) arc (0:180:2);
 \draw[densely dashed] (11.5,2) circle (0.5 and 0.25) (7.5,2) circle (0.5 and 0.25);
 \draw (12,2) arc (0:180:2.5);
 \draw (11,2) arc (0:180:1.5);
 \end{scope}}
 \newcommand{\trou}{
 (2,0) ..controls +(0.5,-0.25) and +(-0.5,-0.25) .. (4,0)
 (2.3,-0.1) ..controls +(0.6,0.2) and +(-0.6,0.2) .. (3.7,-0.1)}
 \foreach \x in {8,30} {\draw (\x,4) -- (\x,19);}
 \draw[green] (30,4.5) -- (8,4.5) -- (0,0.5) (2.5,0.5) -- (2.5,0) -- (10.5,4) -- (10.5,4.5);
 \begin{scope} 
 \draw (0,0) -- (8,4) -- (30,4) -- (22,0) -- (0,0);
 \handleplus{4}{0}
 \handleplus{11}{0}
 \end{scope}
 \foreach \y in {5,10} {\draw[red,dashed] (30,\y+4) -- (19,\y+4);}
 \draw[red] (11,5) .. controls +(2,1) and +(2,-1) .. (18,7) .. controls +(-2,1) and +(-2,1) .. (19,9);
 \draw[red,->] (11.9,12) -- (11.8,11.9);
 \draw[red] (14,12) .. controls +(0,-2) and +(0,-2) .. (16,14) .. controls +(0,2) and +(0,2) .. (19,14);
 \draw[red,->] (14,5.72) -- (13.9,5.7);
 \draw[white,line width=3pt] (8,15) -- (22,15);
 \begin{scope} 
 \draw[yshift=15cm] (0,0) -- (8,4) -- (30,4) -- (22,0) -- (0,0);
 \handle{-0.5}{15}
 \handle{5.5}{15}
 \handle{11.5}{15}
 \end{scope}
 \foreach \x in {0,22} {\draw[white,line width=3pt] (\x,0.5) -- (\x,14.6); \draw (\x,0) -- (\x,15);}
 \begin{scope} [xshift=9cm,yshift=8.7cm,scale=0.7]
 \draw[blue] (0,0) ..controls +(0,1) and +(-2,1) .. (4,2);
 \draw[blue] (4,2) ..controls +(2,-1) and +(-2,-1) .. (8,2);
 \draw[blue] (8,2) ..controls +(2,1) and +(-1.2,0) .. (12,1.3);
 \draw[blue] (0,0) ..controls +(0,-1) and +(-2,-1) .. (4,-2);
 \draw[blue] (4,-2) ..controls +(2,1) and +(-2,1) .. (8,-2);
 \draw[white,line width=3pt] (8,-2) ..controls +(2,-1) and +(-1.2,0) .. (12,-1.3);
 \draw[blue] (8,-2) ..controls +(2,-1) and +(-1.2,0) .. (12,-1.3);
 \draw[white,line width=3pt] (14,2) ..controls +(2,1) and +(0,1) .. (18,0);
 \draw[blue] (14,2) ..controls +(2,1) and +(0,1) .. (18,0);
 \draw[blue] (14,-2) ..controls +(2,-1) and +(0,-1) .. (18,0);
 \draw[white,line width=3pt] (12,1.3) ..controls +(0.5,0) and +(-1,-0.5) .. (14,2);
 \draw[blue] (12,1.3) ..controls +(0.5,0) and +(-1,-0.5) .. (14,2);
 \draw[blue] (12,-1.3) ..controls +(0.5,0) and +(-1,0.5) .. (14,-2);
 \draw[yshift=0.08cm,blue] \trou;
 \draw[xshift=6cm,yshift=0.08cm,blue] \trou;
 \draw[xshift=12cm,yshift=0.08cm,blue] \trou;
 \end{scope} 
 \draw[red,dashed] (11,5) -- (22,5) -- (30,9);
 \draw[white,line width=3pt] (11,11) -- (11,10) -- (21,10);
 \draw[red] (11,10) .. controls +(0,2) and +(0,2) .. (14,12);
 \draw[red,dashed] (11,10) -- (22,10) -- (30,14);
 \draw[blue] (5.7,2.5) circle (0.3 and 0.15);
 \draw[white,line width=5pt] (5.7,2.5) .. controls +(0,2) and +(-1,-1) .. (9.7,7.7);
 \draw[blue] (5.4,2.5) .. controls +(0,2) and +(-1,-1) .. (9.4,7.9);
 \draw[blue] (6,2.5) .. controls +(0,2) and +(-1,-1) .. (9.95,7.45);
 \draw[white,line width=2pt] (10,4.25) -- (2.5,0.5);
 \draw[green] (10.5,4.5) -- (2.5,0.5);
 \draw[white,line width=2pt] (0.5,0.5) -- (2,0.5) (3,0.5) -- (6,0.5) (28,3.5) -- (29.5,4.25);
 \draw[green] (0,0.5) -- (22,0.5) -- (30,4.5) (4,2.5) -- (0,0.5);
 \draw (-5,8) node {$(M,K)=$};
 \draw (33,11.9) node {$\left.\resizebox{0cm}{1.96cm}{\phantom{rien}}\right\rbrace (N,J)$};
 \draw (1,-1.5) node {$C$};
 \draw (12,-1.5) node {$Id_2$};
\end{tikzpicture} \caption{Decomposition of a Lagrangian cobordism with paths} \label{figdecompositionCob}
\end{center}
\end{figure}

With the surgery $\chir$ is associated the tensor $\mu(\chir)\in\A_\Q(H_1(\totC;\Q))$. Let $W$ be a square matrix of size $e$ with coefficients in $\Qt$. 
Interpret $W$ as a hermitian form on $H_1(\totC;\Q)$ written in the basis $(([(c_i)_+(\beta_j)])_{1\leq j\leq e_i})_{1\leq i\leq n}$. 
Given an $H_1(\totC;\Q)$--colored Jacobi diagram, one can glue some legs of the diagram with $W$, see Figure \ref{figglue}. Changing the labels of the univalent vertices {\em via} the bijection 
$$[(c_i)_+(\beta_j)]\mapsto\sum_{\ell=1}^{i-1}e_\ell+j$$ 
onto $\{1,\dots,e\}$, this provides a diagram in $\tA_{\Qt}(*_{\lfloor e\rceil^+})$.

The following result is a direct adaptation of \cite[Section 4.4]{Mas}, with the winding matrices playing the role of the linking matrices.
\begin{proposition}
 Let $(M,K)$ be a Lagrangian $q$--cobordism with paths and let $(B,K,\gamma)$ be the associated bottom-top $q$--tangle with paths. Let $\chir=(\chir_1,\dots,\chir_n)$ 
 be a null LP--surgery on $(M,K)$. Define as above a decomposition of the cobordism $(M,K)$. Choose top and bottom words for $(N,J)$ and the $(C_i,\varnothing)$ 
 in order to get a decomposition of the Lagrangian $q$--cobordism $(M,K)$ as $(M,K)=((C_1,\varnothing)\otimes\dots\otimes(C_n,\varnothing)\otimes Id_v)\circ(N,J)$. 
 Let $(D,J,\varsigma)$ be the bottom-top $q$--tangle with paths associated with $(N,J)$. Let $\varsigma^\chirs$ be the subtangle of $\varsigma^-$ corresponding to the $C_i$'s. 
 Let $e$ be the number of components of $\varsigma^\chirs$. 
 Let $\tilde{\rho}_\chirs:\tA_{\Qt}(*_{\lfloor e\rceil^+})\to\tA_{\Qt}(*_{\lfloor g\rceil^+\cup\lfloor f\rceil^-})$ be the linear form which changes the labels 
 of the univalent vertices as follows: 
 $$\tilde{\rho}_\chirs(\ell^+)=\sum_{j=1}^g W_{\varsigma^\chirs}(\varsigma_j^+,\varsigma_\ell)\cdot j^+ +\sum_{i=1}^f W_{\varsigma^\chirs}(\varsigma_{e+i}^-,\varsigma_\ell)\cdot i^-.$$
 Then:
 $$\sum_{I\subset\{1,\dots,n\}}(-1)^{|I|}\tZ\left((M,K)(\chir_I)\right)\equiv_n
 \expd\left(\frac{1}{2}W_\gamma\right)\sqcup
 \tilde{\rho}_\chirs\left(\textrm{\begin{partext}{4.5cm} sum of all ways of gluing some legs of $\mu(\chir)$ with $W_{\varsigma^\chirs}/2$\end{partext}}\right),$$
 where $\chir_I=((\chir_i)_{i\in I})$ and $\equiv_n$ means ``equal up to i--degree at least $n+1$ terms''.
\end{proposition}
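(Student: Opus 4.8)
The plan is to adapt \cite[Section~4.4]{Mas} essentially line by line, the winding matrices of Section~\ref{secwinding} taking the role of the linking matrices and the beaded diagrammatics of Sections~\ref{sectargetcat}--\ref{secfunctor} that of the non-beaded versions used in \cite{CHM,Mas}. First I would exploit functoriality. Replacing $C_i$ by $C_i'$ for $i\in I$ turns the chosen decomposition of $(M,K)$ into one of $(M,K)(\chir_I)$; since $\tZ$ is a functor preserving the tensor product on $\lcob_q\otimes\tlcob_q$, and $\tZ(Id_v)=Id_f$, this gives
$$\tZ\bigl((M,K)(\chir_I)\bigr)=\Bigl(\bigotimes_{i\in I}\tZ(C_i',\varnothing)\otimes\bigotimes_{i\notin I}\tZ(C_i,\varnothing)\otimes Id_f\Bigr)\circ\tZ(N,J).$$
Both $\sqcup$ and $\circ$ in $\tAts$ are $\Q$--bilinear, so, writing $\delta_i:=\tZ(C_i,\varnothing)-\tZ(C_i',\varnothing)$, the alternating sum factors as
$$\sum_{I\subset\{1,\dots,n\}}(-1)^{|I|}\tZ\bigl((M,K)(\chir_I)\bigr)=\Bigl(\bigl(\textstyle\bigsqcup_{i=1}^{n}\delta_i\bigr)\sqcup Id_f\Bigr)\circ\tZ(N,J).$$

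Next I would run the degree count. Each $\tZ(C_i,\varnothing)$ is top-substantial and group-like, hence has empty i--degree $0$ part; by the lemma of Massuyeau recalled above, the i--degree $1$ parts of $\tZ(C_i,\varnothing)$ and $\tZ(C_i',\varnothing)$ differ by $\mu(\chir_i)$. Therefore $\delta_i$ starts in i--degree $1$ with i--degree $1$ part $\mu(\chir_i)$, and $\bigsqcup_i\delta_i$ starts in i--degree $n$ with i--degree $n$ part $\mu(\chir)=\mu(\chir_1)\sqcup\dots\sqcup\mu(\chir_n)$. Since composition in $\tAts$ merely glues univalent vertices, it is additive in i--degree; hence, modulo i--degree $\geq n+1$, one may replace $\bigsqcup_i\delta_i$ by $\mu(\chir)$ and then $\tZ(N,J)$ by its i--degree $0$ part. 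By the lemma of Section~\ref{secfunctor} asserting that $\tZ$ of a Lagrangian cobordism with paths is Gaussian (together with Lemmas~\ref{lemmagrouplikeZ} and~\ref{lemmagrouplikeTg}), $\tZ(N,J)=\expd(\tfrac12 W_\varsigma)\sqcup H'$ with $H'$ substantial, so its i--degree $0$ part is $\expd(\tfrac12 W_\varsigma)$. Thus
$$\sum_{I}(-1)^{|I|}\tZ\bigl((M,K)(\chir_I)\bigr)\equiv_n\bigl(\mu(\chir)\sqcup Id_f\bigr)\circ\expd\bigl(\tfrac12 W_\varsigma\bigr).$$

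It then remains to compute this last composition. I would block-decompose $W_\varsigma$ according to the three parts of $\varsigma$: the top components $\varsigma^+$, the subtangle $\varsigma^\chirs$ of the $e$ components produced by the $C_i$'s, and the remaining bottom components coming from $Id_v$; the $(\varsigma^+,\varsigma^+)$ block vanishes because $\tZ(N,J)$ is top-substantial. Expanding $\expd(\tfrac12 W_\varsigma)$ as the disjoint union of the exponentials of these blocks and carrying out the gluing prescribed by $\circ$: struts of the $(\varsigma^\chirs,\varsigma^\chirs)$ block $W_{\varsigma^\chirs}$ whose two ends are glued to legs of $\mu(\chir)$ realise the pairings in the ``sum of all ways of gluing some legs of $\mu(\chir)$ with $W_{\varsigma^\chirs}/2$''; struts of the blocks coupling $\varsigma^\chirs$ with $\varsigma^+$ and with the bottom components, glued at their $\varsigma^\chirs$ end, prolong the remaining legs exactly according to the linear form $\tilde\rho_\chirs$ (the $Id_f$ factor carrying the bottom ends to the output labels $i^-$); and the struts meeting no $\varsigma^\chirs$ component assemble into $\expd(\tfrac12 W_\gamma)$, where the coincidence of the relevant block of $W_\varsigma$ with $W_\gamma$ follows by reading both as equivariant linking matrices (Proposition~\ref{propintertopo}) and invoking that the $C_i$ are null, as in Corollary~\ref{propinvariancewind}. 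Collecting these contributions yields the stated equality up to i--degree $\geq n+1$.

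The main obstacle is the bookkeeping in this last step: tracking the factors $\tfrac12$ arising from the exponentials so that they match those in the statement, checking that the composition produces precisely the partial-pairing-plus-$\tilde\rho_\chirs$ pattern with the correct signs and coefficients, and --- the one genuinely topological point --- verifying that the block of the winding matrix of $(N,J)$ indexed by $\varsigma^+$ and the $Id_v$-components equals the winding matrix $W_\gamma$ of $(M,K)$, despite the surgery concealed in the composition $\bigl((C_1,\varnothing)\otimes\dots\otimes(C_n,\varnothing)\otimes Id_v\bigr)\circ(N,J)$; this is where the nullity hypothesis enters, exactly as in \cite[Section~4.4]{Mas}.
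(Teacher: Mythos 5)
Your proposal is correct and follows exactly the route the paper intends: the paper gives no written proof beyond declaring the result ``a direct adaptation of [Mas, Section~4.4], with the winding matrices playing the role of the linking matrices,'' and your reconstruction (functoriality plus multilinearity to factor the alternating sum as $\bigl(\bigsqcup_i(\tZ(C_i)-\tZ(C_i'))\sqcup Id_f\bigr)\circ\tZ(N,J)$, the i--degree count isolating $\mu(\chir)$ against the strut part of $\tZ(N,J)$, and the block analysis of $W_\varsigma$ producing the pairing, $\tilde\rho_\chirs$, and $\expd(\tfrac12 W_\gamma)=\expd(\tfrac12 W_{\varsigma\setminus\varsigma^\chirs})$ terms) is precisely that adaptation, including the correct identification of where nullity and Corollary~\ref{propinvariancewind} enter. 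The only quibbles are cosmetic: the i--degree $0$ part of each $\tZ(C_i,\varnothing)$ is the empty diagram rather than zero (the cancellation in $\delta_i$ is what matters), and the $(\varsigma^+,\varsigma^+)$ struts need not be discussed at all since their ends are output labels that simply pass through into $\expd(\tfrac12 W_\gamma)$.
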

Note that $W_\gamma=W_{\varsigma\setminus\varsigma^\chirs}$ by Proposition \ref{propinvariancewind}.

For a cobordism with one path, the next result gives a more intrinsic version of these formulas, which does not refer to a decomposition of the cobordism. 
A similar result is given by in \cite[Lemma 4.1]{Mas} for a cobordism with no path.

Given a null LP--surgery $\chir=(\chir_1,\dots,\chir_n)$ on a Lagrangian cobordism with paths $(M,K)$, define a hermitian form 
$\ell_{(M,K)}(\chir):H_1(\totC;\Q)\times H_1(\totC;\Q)\to\Qt$ in the same way as $\ell_{(S,\kappa)}(\chir)$ was defined in the introduction. 
Also define a map $\rho_\chirs:\A_\Q(H_1(\totC;\Q))\to\tA_{\Qt}(*_{\lfloor g\rceil^+\cup\lfloor f\rceil^-})$ which changes the labels of the univalent vertices by first sending them in $H_1(M;\Q)$ {\em via} $H_1(\totC;\Q)\cong\oplus_{i=1}^nH_1(C_i;\Q)\to H_1(M;\Q)$, and then writing them in terms of the $[m_+(\beta_i)]$ and $[m_-(\alpha_i)]$. 

A direct adaptation of (the end of) \cite[Section 4.4]{Mas} gives:
\begin{proposition} \label{propformulas1}
 Let $(M,K)\in\tlcob_q(w,v)$ be a Lagrangian $q$--cobordism with one path. Let $\chir=(\chir_1,\dots,\chir_n)$ 
 be a null LP--surgery on $(M,K)$. Let $(B,K,\gamma)$ be the bottom-top tangle with one path associated with $(M,K)$. Then:
 $$\sum_{I\subset\{1,\dots,n\}}(-1)^{|I|}\tZ\left((M,K)(\chir_I)\right)\equiv_n
 \expd\left(\frac{1}{2}W_\gamma\right)\sqcup
 \rho_\chirs\left(\textrm{\begin{partext}{4.5cm} sum of all ways of gluing some legs of $\mu(\chir)$ with $\ell_{(M,K)}(\chir)/2$\end{partext}}\right).$$
\end{proposition}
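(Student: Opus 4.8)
The plan is to deduce Proposition \ref{propformulas1} from the preceding (decomposition) proposition by giving intrinsic, decomposition-free descriptions of the two pieces of data occurring in its right-hand side: the winding matrix $W_{\varsigma^\chirs}$ and the relabelling map $\tilde\rho_\chirs$. This is the beaded analogue of the way the intrinsic splitting formula is deduced from the decomposition formula in \cite[Section 4.4]{Mas} in the pathless case, the new ingredient being the topological interpretation of winding matrices as equivariant linking matrices (Proposition \ref{propintertopo} and the lemma preceding it), which plays the role of the identification ``winding matrix $=$ linking matrix''. Since the left-hand side of the two statements is literally the same, all of the work is in identifying right-hand sides.

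First I would fix a decomposition $(M,K)=((C_1,\varnothing)\otimes\dots\otimes(C_n,\varnothing)\otimes Id_v)\circ(N,J)$ as in the preceding proposition, together with the boundary parametrisations $c_i,c_i'$, and let $(D,J,\varsigma)$ be the bottom-top $q$--tangle with one path associated with $(N,J)$. By Proposition \ref{propinvariancewind} one has $W_\gamma=W_{\varsigma\setminus\varsigma^\chirs}$, so the Gaussian factor $\expd(\tfrac12 W_\gamma)$ is already the intrinsic one and needs no further attention. For the second factor I would use the lemma identifying $W_{\gamma_I\gamma_J}$ with the equivariant linking matrix $\Lk_e(\tilde\gamma_I,\tilde\gamma_J)$ (together with Proposition \ref{propintertopo}) to identify $W_{\varsigma^\chirs}$ with the equivariant linking matrix, computed in the infinite cyclic cover of the exterior of $K$ in $M$, of lifts of the curves $(c_i)_+(\beta_j)$ pushed off $\partial C_i$. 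Since each $C_i$ is null with respect to $K$ it lifts to that cover, so these equivariant linking numbers are defined, and by construction of $\ell_{(M,K)}(\chir)$ this matrix is exactly $\ell_{(M,K)}(\chir)$ written in the basis $\big(([(c_i)_+(\beta_j)])_{1\le j\le e_i}\big)_{1\le i\le n}$ of $H_1(\totC;\Q)$; the invariance of equivariant linking numbers under null LP--surgeries (as used in the proof of Corollary \ref{propinvariancewind}, \emph{via} \cite[Lemma 2.1]{M3}) shows this does not depend on whether one computes in $M$ or after surgery. Consequently ``gluing some legs of $\mu(\chir)$ with $W_{\varsigma^\chirs}/2$'' and ``gluing some legs of $\mu(\chir)$ with $\ell_{(M,K)}(\chir)/2$'' produce the same element with $H_1(\totC;\Q)$--coloured legs, up to the index bijection $[(c_i)_+(\beta_j)]\mapsto\sum_{\ell<i}e_\ell+j$.

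Next I would match the two relabellings. The coefficients $W_{\varsigma^\chirs}(\varsigma_j^+,\varsigma_\ell)$ and $W_{\varsigma^\chirs}(\varsigma_{e+i}^-,\varsigma_\ell)$ defining $\tilde\rho_\chirs$ are, again by the topological interpretation, the equivariant linking numbers of the lift of $\varsigma_\ell$ with the lifts of the closure curves $\bar\varsigma_j^+$ and $\bar\varsigma_{e+i}^-$, and under the chosen decomposition these closure curves represent $[m_+(\beta_j)]$ and $[m_-(\alpha_i)]$ in $H_1(M;\Q)$. Running the homological expansion from the proof of Proposition \ref{propintertopo} inside the cover of $M\setminus K$ — expressing the class of $\tilde\varsigma_\ell$ in the homology of the complement of $\bigcup\bar\varsigma_j^+\cup\bigcup\bar\varsigma_{e+i}^-$ and its translates as a combination of meridians with equivariant linking number coefficients — one gets that these coefficients are exactly the components of the image of $[(c_i)_+(\beta_j)]$ under $H_1(\totC;\Q)\cong\oplus_i H_1(C_i;\Q)\to H_1(M;\Q)$, written in the basis $[m_+(\beta_\bullet)],[m_-(\alpha_\bullet)]$. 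That is precisely the definition of $\rho_\chirs$, so $\tilde\rho_\chirs$ agrees with $\rho_\chirs$ on the diagrams to which it is applied; hence the right-hand side of the preceding proposition equals the asserted intrinsic expression, which in particular proves that it does not depend on the chosen decomposition.

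The main obstacle is this last matching step: one must keep careful track of base points, orientations and the $H_K$--part of the various curves when transporting the homological computation of Proposition \ref{propintertopo} from a bottom-top tangle in a $\Q$--cube to the curves $\varsigma_\ell$, $\varsigma_j^+$, $\varsigma_{e+i}^-$ sitting inside $(N,J)$, and one must check that the factor $\tfrac12$ and the orientation conventions of the gluing in Figure \ref{figglue} are compatible with the convention used to define $\tilde\rho_\chirs$ and with the hermitian (rather than merely bilinear) nature of $\ell_{(M,K)}(\chir)$. This is also the place where the hypothesis of a single path is used: it is what makes the relevant covering the infinite cyclic one $\tilde E$, so that the expansion coefficients land in $\Qt$ in the way required by the definitions of $\ell_{(M,K)}(\chir)$ and $\rho_\chirs$.
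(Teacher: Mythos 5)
Your proposal follows the same route as the paper, which proves this proposition simply by invoking ``a direct adaptation of (the end of) \cite[Section 4.4]{Mas}'' applied to the preceding decomposition-based proposition: you correctly identify that the whole content is matching the two right-hand sides, using Proposition~\ref{propintertopo} to read $W_{\varsigma^\chirs}$ as the equivariant linking form $\ell_{(M,K)}(\chir)$ and to convert $\tilde\rho_\chirs$ into $\rho_\chirs$, with Corollary~\ref{propinvariancewind} handling the independence from the surgery. Your write-up is in fact more detailed than the paper's, and the caveats you flag (base points, orientations, the role of the single-path hypothesis in selecting the infinite cyclic cover) are exactly the points the cited adaptation must take care of.
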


\begin{proof}[Proof of Theorem \ref{thmain}]
 Use Propositions \ref{propinvariantQSK} and \ref{propformulas1}. The strut part disappears since we deal with a cobordism from $F_0$ to $F_0$. The map $\rho_\chirs$ 
 kills all terms with at least one univalent vertex since a Lagrangian cobordism from $F_0$ to $F_0$ has trivial first homology group over $\Q$.
\end{proof}

\def\cprime{$'$}
\providecommand{\bysame}{\leavevmode ---\ }
\providecommand{\og}{``}
\providecommand{\fg}{''}
\providecommand{\smfandname}{\&}
\providecommand{\smfedsname}{\'eds.}
\providecommand{\smfedname}{\'ed.}
\providecommand{\smfmastersthesisname}{M\'emoire}
\providecommand{\smfphdthesisname}{Th\`ese}

\end{document}